\documentclass[hidelinks,onefignum,onetabnum]{siamart220329}

\usepackage{amsfonts}
\usepackage{enumitem}
\usepackage{booktabs}

\newcommand{\bmat}[1]{\begin{bmatrix}#1\end{bmatrix}}
\newcommand{\smat}[1]{\left[\begin{smallmatrix}#1\end{smallmatrix}\right]}

\newcommand{\tu}[1]{\textup{#1}}
\newcommand{\real}{\mathbb{R}}

\makeatletter
\def\ifemptyarg#1{%
  \if\relax\detokenize{#1}\relax 
    \expandafter\@firstoftwo
  \else
    \expandafter\@secondoftwo
  \fi}
\makeatother
\newcommand{\Zbb}{\bar{\boldsymbol{\zeta}}}
\newcommand{\AB}[1][]{\ifemptyarg{#1}{\ensuremath{\bmat{A~\,B}}}{\ensuremath{\bmat{A_{#1}~\,B_{#1}}}}}
\newcommand{\h}{\frac{1}{2}}

\DeclareMathOperator{\Tp}{Tp}

\newsiamremark{remark}{Remark}
\newtheorem{assumption}{\textsc{Assumption}}
\newsiamthm{claim}{Claim}
\newsiamthm{fact}{Fact}
\newsiamthm{problem}{Problem}

\allowdisplaybreaks
\headers{Data-driven input-to-state stabilization}{H. Chen, A. Bisoffi, and C. De Persis}
\title{Data-driven input-to-state stabilization\thanks{Submitted to arXiv on July 08, 2024.}
\funding{This publication is part of the project Digital Twin with project number
P18-03 of the research programme TTW Perspective which is (partly)
financed by the Dutch Research Council (NWO).}}
\author{Hailong Chen\thanks{Engineering and Technology Institute, University of Groningen, 9747 AG Groningen, The Netherlands (\email{hailong.chen@rug.nl}, \email{c.de.persis@rug.nl}).}
\and Andrea Bisoffi\thanks{Department of Electronics, Information,
Bioengineering, Politecnico di Milano, 20133 Milan, Italy
(\email{andrea.bisoffi@polimi.it}).}
\and Claudio De Persis\footnotemark[2]}

\ifpdf
\hypersetup{
  pdftitle={Data-driven input-to-state stabilization},
  pdfauthor={H. Chen, A. Bisoffi and C. De Persis}
}
\fi

\begin{document}

\maketitle

\begin{abstract}
For the class of nonlinear input-affine systems with polynomial dynamics, we consider the problem of designing an input-to-state stabilizing controller with respect to typical exogenous signals in a feedback control system, such as actuator and process disturbances.
We address this problem in a data-based setting when we cannot avail ourselves of the dynamics of the actual system, but only of data generated by it under unknown bounded noise.
For all dynamics consistent with data, we derive sum-of-squares programs to design an input-to-state stabilizing controller, an input-to-state Lyapunov function and the corresponding comparison functions.
This numerical design for input-to-state stabilization seems to be relevant not only in the considered data-based setting, but also in a model-based setting.
Illustration of feasibility of the provided sum-of-squares programs is provided on a numerical example.
\end{abstract}

\begin{keywords}
data-driven control, input-to-state stability, sum-of-squares, robust control, polynomial systems
\end{keywords}

\begin{MSCcodes}
93B51   	Design techniques (robust design, computer-aided design, etc.);
93D09   	Robust stability;
93D15   	Stabilization of systems by feedback;
93D30   	Lyapunov and storage functions;
90C22   	Semidefinite programming
\end{MSCcodes}

\section{Introduction}

Input-to-state stability (ISS), originated in \cite{sontag1989smooth}, is a pivotal notion in the analysis and design of nonlinear control systems as it characterizes how stability properties gracefully degrade depending on the ``size'' of exogenous inputs such as actuator, process and sensor disturbances \cite{sontag2008input,Mironchenko2023input}.
Notable features of input-to-state stability are, among others, that it generalizes the classical notion of global asymptotic stability to systems with inputs and is equivalent to the existence of an input-to-state Lyapunov function based on converse theorems \cite{sontag1995characterizations}.
Since input-to-state stability applies to ``open systems'', it is also related to other fundamental properties of nonlinear systems like dissipativity, passivity and $L_2$-gain.

Designing control laws that make a generic nonlinear system input-to-state stable with respect to exogenous inputs remains nontrivial even when a model of the nonlinear system is available.
In \cite[Theorem~3]{liberzon2002universal}, this is done with the ``universal formula'' approach starting from a control Lyapunov function.
In \cite[\S VI-VII]{krstic1998inverse}, ISS control Lyapunov functions are obtained via backstepping.

Designing a control law to ensure input-to-state stabilization of the closed loop or finding an ISS control Lyapunov function becomes even less trivial if one does not have a model of the nonlinear system but relies only on noisy data from such system.
Indeed, the noise in these data intrinsically leads to a set of possible models in the sense of set-membership identification \cite{milanese2004set} and, given this uncertainty, one would like to design a control law that ensures input-to-state stabilization of all closed-loop systems corresponding to this set of possible models, among which the model of the actual system is not distinguishable.

In previous works \cite{guo2021data,bisoffi2022data,luppi2023data} we have noted that input-affine nonlinear systems with polynomial dynamics lend themselves to the design of a control law to achieve asymptotic stabilization or robust invariance thanks to sum-of-squares (SOS) programs \cite{parrilo2003semidefinite,chesi2010lmi,jarvis2005control} and the resulting SOS programs are convex or biconvex \cite[Def.~1.3]{gorski2007biconvex}.
The considered class of nonlinear systems is relevant in that polynomial vector fields can approximate smooth vector fields tightly on compact sets.
Despite limitations \cite{ahmadi2011globally}, certifying dynamical properties of a system through SOS conditions offers a numerically constructive solution to a challenging problem.

In this paper we ask the question whether for this class of nonlinear systems it is possible to design a control law to achieve input-to-state stabilization of the closed-loop system.
We provide a positive answer to this question.
As a by-product, our results for the considered setting seems to be new also for the model-based case in providing a numerical construction of controller and ISS-Lyapunov function.

\subsection*{Contribution}

After characterizing the set of nonlinear input-affine systems with polynomial dynamics that are consistent with the noisy measured data points, our main results provide SOS programs that return a polynomial control law for input-to-state stabilization of the closed-loop system with respect to actuator and process disturbances for all dynamics consistent with data, along with an ISS-Lyapunov function and comparison functions certifying input-to-state stabilization.
In the first two results (Theorems~\ref{thm:biconvex_ISS_w} and \ref{thm:biconvex_ISS_d} for actuator and process disturbances), the SOS programs contain products between decision variables and, to circumvent this nonconvexity, a widespread strategy already used in a model-based case \cite{jarvis2005control,majumdar2013control} is to solve alternately for a subset of decision variables while fixing all other ones.
This motivates us to derive the second two results (Theorems~\ref{thm:convex_ISS_w} and \ref{thm:convex_ISS_d}), which use instead a specific parametrization of controllers and ISS-Lyapunov functions to propose natively convex SOS programs for input-to-state stabilization.
Advantages and disadvantages of these two approaches are illustrated on the numerical example and discussed in Sections~\ref{sec:exper}-\ref{sec:dis}.
The specialization of our results to the model-based case is explored in Section~\ref{sec:modelbased}.

\subsection*{Related work}

We presented a preliminary result in the conference paper \cite{chen2023data}, where we obtained a biconvex SOS program for input-to-state stabilization with respect to measurement noise, spurred by its use in event-triggered control \cite{tabuada2007event}.
For the model-based case, we refer the reader to \cite[\S 5.11]{Mironchenko2023input} for a discussion and results besides the mentioned \cite{liberzon2002universal,krstic1998inverse}.
As said before, these results are based on a control Lyapunov function, which is generally not available and needs to be constructed.
In~\cite{grune2023examples}, a numerical approach to construct a control Lyapunov function  uses deep neural networks to approximate separable control Lyapunov functions.
A sum-of-squares approach to verify input-to-state stability of a given nonlinear system is in~\cite{ichihara2012sum}, unlike which we propose biconvex and convex controller design conditions using noisy data.
For the data-based case, a current depiction of research in data-driven control design of nonlinear systems is in the surveys \cite{de2023learning} and \cite{martin2023guarantees}.
Among works focused on learning dynamical systems properties from data, the goal of \cite{ahmadi2023safely} is learning unknown linear and nonlinear autonomous dynamical systems without violating safety constraints on the state of the system, at the same time.
The goal of~\cite{lavaei2023data} is to certify input-to-state stability of an interconnected system via data-based ISS-Lyapunov functions of its subsystems, where the inputs of each of these are states of other subsystems.

\subsection*{Structure} 
We introduce relevant background knowledge in Section~\ref{sec:pre}. 
Section~\ref{sec:problem} formulates the considered problem and states some motivation. 
Our main results for data-driven input-to-state stabilization, including biconvex and convex programs, are in Section~\ref{sec:main}.
Their effectiveness is verified numerically in Section~\ref{sec:exper} and an overall discussion on the so-obtained results is in Section~\ref{sec:dis}.
Conclusions and future research directions are in Section~\ref{sec:conclu}.

\section{Preliminaries}\label{sec:pre}
\subsection{Notation}\label{sec:notation}

In the sequel we use sum-of-squares (SOS) polynomials and SOS polynomial matrices and we generally write $s(x,y)$ if $s$ is an SOS polynomial in the variables $x$ and $y$ and $S(x,y)$ if $S$ is an SOS polynomial matrix in the variables $x$ and $y$.
We refer to, e.g., \cite{jarvis2005control}, \cite{parrilo2003semidefinite}, \cite{chesi2010lmi}, for excellent surveys on these notions and the related results.
We write $A \succ 0$ or $A \succeq 0$ if a symmetric matrix $A$ is positive definite or positive semidefinite.
We then write $A \succ B$ or $A \succeq B$ if $A-B \succ 0$ or $A-B \succeq 0$.
For a positive semidefinite matrix $A$, $A^{\frac{1}{2}}$ is its unique positive semidefinite square root \cite[p.~440]{horn2013matrix}.
The set of nonnegative real numbers is $\real_{\ge 0}$.
The $n$-dimensional Euclidean space is $\mathbb{R}^n$. 
The Euclidean norm of a vector $x \in \mathbb{R}^n$ is $|x|$. 
The identity matrix is denoted as $I$ or $I_n$, where the positive integer $n$ is the dimension of $I_n$.
A zero matrix is written as $0$.
The maximum and minimum eigenvalue of a symmetric matrix $A$ are denoted as $\lambda_\text{min}(A)$ and $\lambda_\text{max}(A)$, respectively. 
For a matrix $A$, $\|A\|$ is its induced $2$-norm.
We abbreviate the symmetric matrix $\smat{A & B^{\top } \\ B & C}$ to $\smat{A & \star \\ B&C}$ or $\smat{A & B^{\top } \\ \star & C}$.
For a square matrix $A$, $\Tp (A) := A+A^\top$.
For vectors $x_1 \in \real^{n_1}$, \dots, $x_q \in \real^{n_q}$, $(x_1, \dots, x_q) := \bmat{x_1^\top & \dots & x_q^\top}^\top$.
A function $f \colon \real^n \to \real$ is positive definite \cite[p.~5]{isidori1999nonlinear} if it vanishes at zero and is positive elsewhere.
The gradient of a continuously differentiable function $V \colon \real^n \to \real$ at $x$ is $\nabla V(x)$.
For $v_1$, $v_2 \in \real^n$, $\langle v_1, v_2 \rangle := v_1^\top v_2$. 

\subsection{Auxiliary results}

We present the essential notions of input-to-state stability we use in the sequel following \cite[\S 10.4]{isidori1999nonlinear}.
We start recalling the notions of comparison functions.

\begin{definition}[{\cite[Defs.~10.1.1-10.1.2]{isidori1999nonlinear}}]
\label{def:comparison_functions}
A continuous function $\alpha\colon [0,a) \to [0, +\infty)$ is said to belong to class $\mathcal{K}$ if it is strictly increasing and $\alpha(0) = 0$.
If $a = +\infty$ and $\lim_{r \to + \infty} \alpha(r) = + \infty$, the function is said to belong to class $\mathcal{K}_\infty$.

A continuous function $\beta \colon [0,a) \times [0,+\infty) \to [0,+\infty)$ is said to belong to class $\mathcal{KL}$ if, for each fixed $s$, the function $r \mapsto \beta(r,s)$ (from $[0,a)$ to $[0,+\infty)$) belongs to class $\mathcal{K}$ and, for each fixed $r$, the function $s \mapsto \beta(r,s)$ (from $[0,+\infty)$ to $[0,+\infty)$) is decreasing and approaches $0$ as $s$ approaches $+\infty$.
\end{definition}

Consider a nonlinear system
\begin{align}
\label{nonlinsysisidori}
\dot{x} = F(x,v)
\end{align}
with state $x \in \real^n$, input $v \in \real^p$, $F(0,0) = 0$ and $F(\cdot, \cdot)$ locally Lipschitz on $\real^n \times \real^p$.
The input function $v \colon [0,+\infty) \to \real^p$ of \eqref{nonlinsysisidori} can be any piecewise continuous bounded function.
The set of all such functions, endowed with the supremum norm
\begin{align*}
\| v(\cdot) \|_\infty := \sup_{t\ge 0} |v(t)|
\end{align*}
is denoted by $L_\infty$.
For this system, input-to-state stability (ISS) is defined next.

\begin{definition}[{\cite[Def.~10.4.1]{isidori1999nonlinear}}]
\label{def:ISS}
System~\eqref{nonlinsysisidori} is said to be input-to-state stable if there exist a class $\mathcal{KL}$ function $\beta(\cdot,\cdot)$ and a class $\mathcal{K}$ function $\gamma(\cdot)$
such that for each $x^\circ \in \real^n$ and each input $v(\cdot) \in L_\infty$, the solution $x(\cdot)$ to~\eqref{nonlinsysisidori} for initial state $x(0) = x^\circ$ and input $v(\cdot)$ satisfies
\begin{align}
\label{ISS_bound_solutions}
|x(t)| \le \beta(|x^\circ|,t) + \gamma(\| v(\cdot)\|_\infty) \quad \forall t \ge 0.
\end{align}
\end{definition}

We note that if one is interested in having input-to-stability, as we are here, the assumption that \eqref{nonlinsysisidori} satisfies $F(0,0) = 0$ is without loss of generality.

The main tool to characterize input-to-state stability are ISS-Lyapunov functions, as defined next.
\begin{definition}[{\cite[Def.~10.4.2, Lemma 10.4.2]{isidori1999nonlinear}}]
\label{def:ISS_lyap_fun}
A continuously differentiable function $V \colon \real^n \to \real$ is called an ISS-Lyapunov function for~\eqref{nonlinsysisidori} if there exist class $\mathcal{K}_\infty$ functions $\underline{\alpha}(\cdot)$, $\overline{\alpha}(\cdot)$, $\alpha(\cdot)$ and a class $\mathcal{K}$ function $\sigma(\cdot)$ such that
\begin{subequations}
\label{ISS_lyap_fun_prop}
\begin{align}
& \underline{\alpha}(|x|)\le V(x) \le \overline{\alpha}(|x|) \quad \forall x \in \real^n \label{ISS_lyap_fun_prop:bounds} \\
& \langle \nabla V(x), F(x,v) \rangle \le - \alpha(|x|) + \sigma(|v|) \quad \forall x \in \real^n ,\,  v \in \real^p. \label{ISS_lyap_fun_prop:dissip_ineq}
\end{align} 
\end{subequations}
\end{definition}

A key relation between ISS and an ISS-Lyapunov function is recalled next.

\begin{fact}[\cite{sontag1995characterizations}, {\cite[Thm.~10.4.1]{isidori1999nonlinear}}]
\label{fact:equiv_iss}
System~\eqref{nonlinsysisidori} is input-to-state stable if and only if there exists an ISS-Lyapunov function.
\end{fact}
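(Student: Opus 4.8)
The plan is to establish the two implications separately. The ``if'' direction is a direct comparison-function argument from the ISS-Lyapunov inequalities \eqref{ISS_lyap_fun_prop}, whereas the ``only if'' direction is the substantial converse statement and rests on converse Lyapunov theory for robustly asymptotically stable systems.

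\emph{Sufficiency.} Let $V$ be an ISS-Lyapunov function with $\underline\alpha,\overline\alpha,\alpha\in\mathcal{K}_\infty$ and $\sigma\in\mathcal{K}$ as in \eqref{ISS_lyap_fun_prop}. First I would recast \eqref{ISS_lyap_fun_prop:dissip_ineq} in ``gain'' form: whenever $|x|\ge\alpha^{-1}(2\sigma(|v|))$, the inequality gives $\langle\nabla V(x),F(x,v)\rangle\le-\h\,\alpha(|x|)$, and by \eqref{ISS_lyap_fun_prop:bounds} this is bounded above by $-\hat\alpha(V(x))$ with $\hat\alpha:=\h\,\alpha\circ\overline\alpha^{-1}$ positive definite; after lower-bounding $\hat\alpha$ by a locally Lipschitz class-$\mathcal{K}$ function one may apply the comparison lemma. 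Hence, along a solution $x(\cdot)$ for input $v(\cdot)\in L_\infty$, as long as $|x(t)|\ge\alpha^{-1}(2\sigma(\|v(\cdot)\|_\infty))$ the value $V(x(t))$ decays and is dominated by $\mu(V(x^\circ),t)$ for some $\mu\in\mathcal{KL}$, while once the trajectory reaches the sublevel set $\{\,x:V(x)\le\overline\alpha(\alpha^{-1}(2\sigma(\|v(\cdot)\|_\infty)))\,\}$ it remains there. Using \eqref{ISS_lyap_fun_prop:bounds} to pass back from $V$ to $|x|$ and $\max\{a,b\}\le a+b$ yields \eqref{ISS_bound_solutions} with $\beta(r,t):=\underline\alpha^{-1}(\mu(\overline\alpha(r),t))\in\mathcal{KL}$ and $\gamma(r):=\underline\alpha^{-1}(\overline\alpha(\alpha^{-1}(2\sigma(r))))\in\mathcal{K}$.

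\emph{Necessity.} Assume \eqref{nonlinsysisidori} is ISS. The first step is to extract a \emph{stability margin}: one shows there is a $\rho\in\mathcal{K}_\infty$ such that the perturbed differential inclusion $\dot x\in\{\,F(x,d):|d|\le\rho(|x|)\,\}$ is (uniformly) globally asymptotically stable; intuitively, $\rho$ is chosen so small that disturbances of size $|d|\le\rho(|x|)$ cannot defeat the ISS decay estimate \eqref{ISS_bound_solutions}. The second step invokes a converse Lyapunov theorem for such robustly/uniformly asymptotically stable inclusions to obtain a smooth, positive definite, radially unbounded $V$ and $\alpha_1,\alpha_2,\alpha_3\in\mathcal{K}_\infty$ satisfying $\alpha_1(|x|)\le V(x)\le\alpha_2(|x|)$ and $\langle\nabla V(x),F(x,d)\rangle\le-\alpha_3(|x|)$ for all $x$ and all $|d|\le\rho(|x|)$. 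The third step upgrades this to arbitrary $v$: if $|v|\le\rho(|x|)$ the previous bound applies verbatim; if $|v|>\rho(|x|)$, then $|x|<\rho^{-1}(|v|)$, and bounding $\langle\nabla V(x),F(x,v)\rangle$ by its supremum over the compact set $\{\,(x,v):|x|\le\rho^{-1}(|v|)\,\}$ produces a class-$\mathcal{K}$ function $\tilde\sigma(|v|)$. Taking $\sigma(r):=\max\{\tilde\sigma(r),\alpha_3(\rho^{-1}(r))\}$ then gives \eqref{ISS_lyap_fun_prop:dissip_ineq} for all $x,v$, so $V$ is an ISS-Lyapunov function.

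\emph{Main obstacle.} The crux is the second step of the necessity argument: producing the robust-stability Lyapunov function requires the nontrivial converse Lyapunov theorem for differential inclusions with a state-dependent disturbance bound, and one must take care that the margin $\rho$ is genuinely class $\mathcal{K}_\infty$ and that the associated inclusion has the regularity (e.g., upper semicontinuity with nonempty compact values) and uniformity of attraction needed for that theorem to apply. The forward direction, by contrast, only requires the routine majorizations mentioned above to cope with comparison functions that are merely positive definite rather than class $\mathcal{K}_\infty$ or locally Lipschitz.
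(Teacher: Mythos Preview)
The paper does not prove this statement: Fact~\ref{fact:equiv_iss} is quoted without proof and simply attributed to \cite{sontag1995characterizations} and \cite[Thm.~10.4.1]{isidori1999nonlinear}. Your proposal therefore goes well beyond the paper, supplying a genuine proof sketch where the paper offers only a citation.

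That said, your sketch is the standard route from those references and is essentially correct. The sufficiency argument via the gain-form reformulation and comparison lemma is the textbook derivation. For necessity, your three steps (extracting a class-$\mathcal{K}_\infty$ stability margin $\rho$, invoking the converse Lyapunov theorem for the resulting robustly GAS differential inclusion, and then upgrading to all $v$ by case-splitting on $|v|\lessgtr\rho(|x|)$) match the architecture of \cite{sontag1995characterizations}. Your identification of the main obstacle is also accurate: the heavy lifting is entirely in the converse Lyapunov step, which requires the Lin--Sontag--Wang machinery and careful regularity bookkeeping. One minor point to tighten if you ever write this out fully: in the third step you bound $\langle\nabla V(x),F(x,v)\rangle$ by its supremum over $|x|\le\rho^{-1}(|v|)$, but you should also restrict $v$ to a bounded set (say $|v|\le r$) to make that supremum finite and obtain a well-defined $\tilde\sigma(r)$; as written, the set $\{(x,v):|x|\le\rho^{-1}(|v|)\}$ is unbounded in $v$.
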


Finally, we report a seminal result on input-to-state stability, originally stated as ``smooth stabilizability implies smooth input-to-state stabilizability''.

\begin{fact}[{\cite[Thm.~1]{sontag1989smooth}}]\label{fact_GAS_ISS}
Consider the system
\begin{align*}
\dot{x} = f(x) + G(x) u =: f(x) + \bmat{g_1(x) & \dots & g_m(x)} u
\end{align*}
with $f$, $g_1$, \dots, $g_m \colon \real^n \to \real^n$ smooth and $f(0) = 0$.
Suppose that there is a smooth map $K \colon \real^n \to \real^m$ with $K(0) = 0$ such that $\dot{x} = f(x) + G(x) K(x)$ is globally asymptotically stable at equilibrium $x=0$ (``smooth stabilizability'').

Then, there is a smooth map $\tilde{K} \colon \real^n \to \real^m$ with $\tilde{K}(0) = 0$ so that $\dot{x} = f(x) + G(x) (\tilde{K}(x) + w)$ is input-to-state stable with respect to $w$ (``smooth input-to-state stabilizability'').
\end{fact}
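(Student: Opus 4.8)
The plan is to construct an ISS-Lyapunov function (in the sense of Definition~\ref{def:ISS_lyap_fun}) for the perturbed closed loop directly from a Lyapunov function of the nominal one, and then invoke Fact~\ref{fact:equiv_iss}. Since $\dot x = f(x) + G(x)K(x)$ has a smooth right-hand side and is globally asymptotically stable at the origin, a smooth converse Lyapunov theorem (Kurzweil; see also the treatment by Lin, Sontag and Wang) provides a smooth, positive definite, radially unbounded $V \colon \real^n \to \real$ and class $\mathcal{K}_\infty$ functions $\underline{\alpha}$, $\overline{\alpha}$ with $\underline{\alpha}(|x|) \le V(x) \le \overline{\alpha}(|x|)$ such that $\langle \nabla V(x), f(x) + G(x)K(x)\rangle \le -\alpha(|x|)$ for all $x$, for some class $\mathcal{K}_\infty$ function $\alpha$. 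If the converse theorem is invoked only with a continuous positive definite decay rate, one first rescales $V$ by a suitable smooth class $\mathcal{K}_\infty$ function to reach a $\mathcal{K}_\infty$ bound; this rescaling is the one genuinely delicate step.

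Given such a $V$, I would set $a(x) := G(x)^\top \nabla V(x) \in \real^m$ and define the feedback
\[
  \tilde K(x) := K(x) - \frac{1}{2}\, a(x) = K(x) - \frac{1}{2}\, G(x)^\top \nabla V(x).
\]
Smoothness of $\tilde K$ is inherited from that of $K$, $G$ and $V$; moreover $V$ attains a strict global minimum at the origin, so $\nabla V(0) = 0$, and together with $K(0) = 0$ this gives $\tilde K(0) = 0$. Next I would differentiate $V$ along $\dot x = f(x) + G(x)(\tilde K(x) + w)$, splitting the right-hand side as $[f(x) + G(x)K(x)] + G(x)(\tilde K(x) - K(x)) + G(x)w$ and using $\langle \nabla V(x), G(x) y\rangle = a(x)^\top y$ for $y \in \real^m$, to obtain
\[
  \langle \nabla V(x), f(x) + G(x)(\tilde K(x) + w)\rangle \le -\alpha(|x|) - \frac{1}{2}|a(x)|^2 + a(x)^\top w.
\]
By the Cauchy--Schwarz and Young inequalities $a(x)^\top w \le \frac{1}{2}|a(x)|^2 + \frac{1}{2}|w|^2$, so the damping term cancels the cross term and
\[
  \langle \nabla V(x), f(x) + G(x)(\tilde K(x) + w)\rangle \le -\alpha(|x|) + \frac{1}{2}|w|^2 \qquad \forall x \in \real^n,\ w \in \real^m.
\]

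This is exactly condition~\eqref{ISS_lyap_fun_prop:dissip_ineq} with $\sigma(r) := \frac{1}{2}r^2$ (class $\mathcal{K}_\infty$, hence class $\mathcal{K}$), while~\eqref{ISS_lyap_fun_prop:bounds} holds with the $\underline{\alpha}$, $\overline{\alpha}$ obtained above; since in addition $f(0) + G(0)\tilde K(0) = 0$ and the right-hand side is locally Lipschitz, $V$ is an ISS-Lyapunov function for $\dot x = f(x) + G(x)(\tilde K(x) + w)$ viewed as a system of the form~\eqref{nonlinsysisidori} with input $w$. Fact~\ref{fact:equiv_iss} then yields input-to-state stability with respect to $w$, which is the claim. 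The only real obstacle I anticipate is the converse-Lyapunov step---securing a \emph{smooth} $V$ with a class $\mathcal{K}_\infty$ decay rate; once $V$ is available, $\tilde K$ is merely the standard ``damping'' correction that renders $-G^\top \nabla V$ passive from $w$, and the remaining estimates are elementary.
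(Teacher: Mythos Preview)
Your argument is correct. Note, however, that the paper does not itself prove Fact~\ref{fact_GAS_ISS}: it is quoted from \cite{sontag1989smooth} as a cited fact without proof. The closest the paper comes is the proof of the Lemma immediately following Theorem~\ref{thm:bilinear_GAS}, where Sontag's original construction is sketched; there the modified feedback is $\tilde{k}(x) = k(x) - \tfrac{\rho(x)}{2m}\,G(x)^\top \nabla V(x)$, with the decay rate $\rho(x) := -\langle \nabla V(x), f(x) + G(x)k(x)\rangle$ entering as a state-dependent damping gain. That version works directly with a merely positive-definite $\rho$ (what Kurzweil's converse theorem delivers without any rescaling) and is naturally paired with the implication form of the ISS-Lyapunov condition. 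Your constant-gain damping $\tilde K = K - \tfrac{1}{2}G^\top\nabla V$ is cleaner and lands immediately on the dissipation inequality~\eqref{ISS_lyap_fun_prop:dissip_ineq} with $\sigma(r)=\tfrac{1}{2}r^2$, but---as you correctly flag---it requires first upgrading the decay rate to class $\mathcal{K}_\infty$ by rescaling $V$. Both routes are standard and valid; yours trades one delicate step (the rescaling) for a more transparent final estimate.
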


In the sequel, the next lemma allows us to design polynomial class $\mathcal{K}_\infty$ functions.
\begin{lemma}[{\cite[Lemma~1]{chen2023data}}]\label{lem:classk}
Consider $\alpha \colon \mathbb{R}_{\ge 0} \to \mathbb{R}_{\ge 0}$ defined as
\begin{align*}
\alpha(r):=\sum_{k=1}^N c_k r^{2 k}
\end{align*}
for some integer $N \ge 1$.
If the scalars $c_1, c_2, \dots, c_N$ satisfy $c_1 \geq 0$, $c_2 \geq 0$, \dots, $c_N \geq 0$ and $c_1+c_2+\dots+c_N>0$, then $\alpha$ belongs to class $\mathcal{K}_{\infty}$.
\end{lemma}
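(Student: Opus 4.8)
The plan is to show directly that $\alpha$ is continuous, strictly increasing on $\mathbb{R}_{\ge 0}$, vanishes at $0$, and is unbounded, since these four properties are precisely what Definition~\ref{def:comparison_functions} requires for membership in class $\mathcal{K}_\infty$ (with $a = +\infty$). Continuity is immediate since $\alpha$ is a polynomial, and $\alpha(0) = \sum_{k=1}^N c_k \cdot 0 = 0$ is trivial. So the two substantive points are strict monotonicity and radial unboundedness.

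For strict monotonicity, I would compute the derivative $\alpha'(r) = \sum_{k=1}^N 2k\, c_k\, r^{2k-1}$. For $r > 0$ each term $2k\, c_k\, r^{2k-1}$ is nonnegative (using $c_k \ge 0$), so $\alpha'(r) \ge 0$ on $(0,+\infty)$. To upgrade this to strict positivity away from a measure-zero set—and hence to strict monotonicity of $\alpha$—I would argue that since $c_1 + \dots + c_N > 0$, at least one index $j$ has $c_j > 0$, so $\alpha'(r) \ge 2j\, c_j\, r^{2j-1} > 0$ for all $r > 0$. Thus $\alpha$ is strictly increasing on $[0,+\infty)$. (Alternatively, one can avoid calculus: for $0 \le r_1 < r_2$, each $r \mapsto r^{2k}$ is nondecreasing and $r \mapsto r^{2j}$ is strictly increasing, so $\alpha(r_2) - \alpha(r_1) = \sum_k c_k(r_2^{2k} - r_1^{2k}) \ge c_j(r_2^{2j} - r_1^{2j}) > 0$.)

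For radial unboundedness, pick again an index $j$ with $c_j > 0$; then $\alpha(r) \ge c_j r^{2j} \to +\infty$ as $r \to +\infty$, using that all omitted terms are nonnegative. Combining: $\alpha$ is continuous, $\alpha(0)=0$, strictly increasing, defined on $[0,+\infty)$, and $\lim_{r\to+\infty}\alpha(r) = +\infty$, so $\alpha \in \mathcal{K}_\infty$ by Definition~\ref{def:comparison_functions}.

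There is no real obstacle here; the only thing requiring a moment of care is making sure the hypothesis $c_1 + \dots + c_N > 0$ (rather than, say, $c_1 > 0$) is actually used at both the monotonicity step and the unboundedness step, namely to extract the single strictly positive coefficient $c_j$ that drives both arguments. I would present the proof in the clean two-line form: fix $j$ with $c_j>0$, then establish strict monotonicity and the limit from the single-term lower bound $\alpha(r) \ge c_j r^{2j}$ together with nonnegativity of the remaining terms.
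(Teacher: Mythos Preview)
Your proof is correct and essentially matches the paper's: both verify continuity and $\alpha(0)=0$ trivially, then use the existence of some $c_j>0$ (guaranteed by $\sum_k c_k>0$ and $c_k\ge 0$) to obtain strict monotonicity and radial unboundedness. The paper argues monotonicity directly via $\alpha(r_2)-\alpha(r_1)=\sum_k c_k(r_2^{2k}-r_1^{2k})$, which is exactly the calculus-free alternative you mention.
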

\begin{proof}
The function $\alpha$ is continuous and $\alpha(0)=0.$ It is strictly increasing if $\alpha(r_2)>\alpha(r_1)$ for all $r_1$ and $r_2$ with $r_2>r_1 \geq 0$. For two arbitrary $r_2$ and $r_1$ such that $r_2>r_1 \geq 0$, $ \alpha(r_2)-\alpha(r_1)=\sum_{k=1}^N c_k(r_2^{2 k}-r_1^{2 k})$. 
This sum of nonnegative quantities is zero only if $c_1=c_2=\dots=c_N=0$. 
This is excluded by hypothesis and, thus, $\alpha(r_2)>\alpha(r_1)$. 
Since at least one of $c_1$, $\ldots, c_N$ is nonzero as a consequence of the hypothesis, $\lim_{r \to +\infty} \alpha(r) = +\infty$.
\end{proof}

\section{Problem formulation and motivation}\label{sec:problem}

Consider the class of nonlinear input-affine systems with polynomial dynamics
\begin{equation}\label{sys2}
\dot{x}=f_{\star}(x)+g_{\star}(x) u,
\end{equation}
where $f_{\star}$ and $g_{\star}$ have thus polynomials as their elements. 
We consider this class because, in general, polynomial vector fields can approximate smooth vector fields tightly on compact sets and, here, they enable the combination of data-based conditions with powerful design tools provided by sum-of-squares (SOS). 
The actual expressions of $f_{\star}$ and $g_{\star}$ are \emph{unknown} to us.
Nonetheless, we introduce the next assumption on them.
\begin{assumption}
\label{assumpt:sys}
We know function libraries $x \mapsto Z(x)$ and $x \mapsto W(x)$, with $Z \colon \real^n \to \real^N$ and $W \colon \real^n \to \real^{M \times n}$, such that:
$Z(0) = 0$, their elements are monomials of $x$ and,
for some constant coefficient matrices $A_\star \in \real^{n \times N}$ and $B_\star \in \real^{n \times M}$,
\begin{align*}
f_\star(x)=A_\star Z(x) \text{ and } g_\star(x)=B_\star W(x) \quad \forall x \in \real^n.
\end{align*}
\end{assumption}
With Assumption~\ref{assumpt:sys}, \eqref{sys2} rewrites as
\begin{equation}\label{sys_poly_ol}
\dot{x}=A_{\star} Z(x)+B_{\star} W(x) u 
\end{equation}
for the \emph{known} $Z$ and $W$ and some \emph{unknown} coefficient matrices $A_{\star}$ and $B_{\star}$. 

We make the next remark on Assumption~\ref{assumpt:sys} and the selection of function libraries $Z$ and $W$.

\begin{remark}
\label{rmk:regressor}
In principle, a foolproof way of satisfying Assumption~\ref{assumpt:sys} is to let $Z$ and $W$ contain all monomials with degree less than a large positive integer since Assumption~\ref{assumpt:sys} requires that if a monomial is present in $f_{\star}$ (or $g_{\star}$), it must be present in $Z$ (or $W$), but not vice versa. 
This choice, however, can be overly conservative because the redundant terms can lead to an increased computational cost and possible infeasibility of the resulting SOS programs. 
Still, even from a high-level knowledge of the system under study \cite{ahmadi2023learning}, it can be apparent which monomials are in $f_{\star}$ (or $g_{\star}$), and thus in $Z$ (or $W$); otherwise, techniques such as those in \cite{brunton2016discovering} can be preliminarily employed. 
In general, a parsimonious choice of the monomials in $Z$ and $W$ is best suited.
\end{remark}

\subsection{Data-driven representation with noisy data}
\label{subsec:data}

In this subsection, we develop a data-driven representation to compensate for the lack of knowledge on $A_{\star}$ and $B_{\star}$. 
Our approach is to gather information about the system from data and, based only on these data, design a state-feedback controller enforcing input-to-state stability with respect to exogenous inputs. 
Data are collected in an open-loop experiment. 
We consider $T$ data points that are generated by \eqref{sys_poly_ol} in the inevitable presence of a noise term $d$, namely, for $i=0, \dots, T-1$,
\begin{equation}
\label{data_collect}
\dot{x}(t_i)=A_{\star} Z(x(t_i))+B_{\star} W(x(t_i)) u(t_i)+d(t_i) 
\end{equation}
where we measure input, state, and state derivative at times $t_i$, i.e., $u(t_i)$, $x(t_i)$, $\dot{x}(t_i)$, but we know only a norm bound on the noise samples, as in the next assumption.
\begin{assumption}\label{assumpt:noise}
For $\delta>0$, all $d(t_0), \dots, d(t_{T-1})$ belong to
\begin{equation}\label{disturbance_bound}
\mathcal{D}:=\{ d \in \mathbb{R}^n\colon |d|^2 \leq \delta \}.
\end{equation}
\end{assumption}

For simplicity, we assume to measure the state derivative $\dot{x}$ at times $t_0, \ldots, t_{T-1}$. 
When not available, $\dot{x}$ can be recovered from a denser sampling of $x$, e.g., using techniques from continuous-time system identification \cite{garnier2003continuous}: at any rate, these techniques allow reconstructing $\dot{x}$ with some error that we factor in through noise $d$. 
See \cite[Appendix A]{de2023event} for an alternative approach that does not require measuring $\dot{x}$, but leads anyhow to the sets $\mathcal{I}_i$ and $\mathcal{I}$ defined below in \eqref{uni_dataset} and \eqref{cap_dataset}. 
Moreover, we emphasize that sampling needs \emph{not} to be uniform (i.e., we do not need $t_{T-1}- t_{T-2}=\dots=t_2-t_1=t_1-t_0$) and, as a matter of fact, data points $\{u(t_i), x(t_i), \dot{x}(t_i)\}_{i=0}^{T-1}$ are \emph{not} required to be collected from a single trajectory but can arise from multiple trajectories, which is especially useful when collecting data from a system with diverging solutions.

Based on the collected data, we can characterize the set of matrices $\AB$ consistent with the $i$-th data point $\{u(t_i), x(t_i), \dot{x}(t_i)\}$, $i = 0, \dots, T-1$, and the instantaneous bound $\mathcal{D}$ in~\eqref{disturbance_bound} as
\begin{equation}\label{uni_dataset}
\mathcal{I}_i :=\Big\{\AB\colon
\dot{x}(t_i)=\AB
\smat{
Z(x(t_i)) \\
W(x(t_i)) u(t_i)}
+d,|d|^2 \le \delta \Big\},
\end{equation}
namely, the set of all matrices $\AB$ that could have generated the data point $\{ u(t_i), x(t_i), \dot{x}(t_i) \}$ for some $d$ complying with bound $\mathcal{D}$, cf.~\eqref{data_collect}. 
The set of matrices consistent with all data points and the instantaneous bound $\mathcal{D}$ is then
\begin{equation}\label{cap_dataset}
\mathcal{I}:=\bigcap_{i=0}^{T-1} \mathcal{I}_i.    
\end{equation}
We emphasize that $\AB[\star] \in \mathcal{I}$ since $d(t_0) \in \mathcal{D}$, \dots, $d(t_{T-1}) \in \mathcal{D}$.

\subsection{Overapproximation of the set $\mathcal{I}$ of consistent matrices}

To construct a more tractable ellipsoidal overapproximation of the set $\mathcal{I}$, we rely on \cite{bisoffi2021trade,luppi2023data}, which extend an approach for classical ellipsoids \cite[Section 3.7.2]{boyd1994linear} to the matrix ellipsoids appearing in the data-based problems under consideration.
For data point $i=0, \cdots, T-1$, define
\begin{equation}
\label{sol_overapp}
\begin{aligned}
& \boldsymbol{C}_i:=\dot{x}\left(t_i\right) \dot{x}\left(t_i\right)^{\top }-\delta I, \quad
\boldsymbol{B}_i:=
-\bmat{
Z(x(t_i)) \\
W(x(t_i)) u(t_i)
}
\dot{x}(t_i)^\top, \\
& \boldsymbol{A}_i:=
\bmat{
Z(x(t_i)) \\
W(x(t_i)) u(t_i)
}
\bmat{
Z(x(t_i)) \\
W(x(t_i)) u(t_i)
}^\top .
\end{aligned}
\end{equation}
Consider the set
\begin{align*}
\bar{\mathcal{I}}:=\{\AB=\zeta^\top : \bar{\mathbf{B}}^\top  \bar{\mathbf{A}}^{-1} \bar{\mathbf{B}}+\bar{\mathbf{B}}^\top \zeta+\zeta^\top  \bar{\mathbf{B}}+\zeta^\top  \bar{\mathbf{A}} \zeta \preceq I \}
\end{align*}
where the matrices $\bar{\mathbf{A}}$ and $\bar{\mathbf{B}}$ are designed by solving
\begin{equation}\label{overapp}
\begin{aligned}
& \text {minimize} \quad & &-\log \det \bar{\mathbf{A}} \quad (\text{over } \bar{\mathbf{A}}, \bar{\mathbf{B}}, \tau_0, \dots, \tau_{T-1}) \\
& \text {subject to} \quad& & \bar{\mathbf{A}} \succ 0, \tau_0 \geq 0, \dots, \tau_{T-1} \geq 0, \\
& & &{\begin{bmatrix}
-I-\sum_{i=0}^{T-1} \tau_i \boldsymbol{C}_i & \star & \star \\
\bar{\mathbf{B}}-\sum_{i=0}^{T-1} \tau_i \boldsymbol{B}_i & \bar{\mathbf{A}}-\sum_{i=0}^{T-1} \tau_i \boldsymbol{A}_i & \star \\
\bar{\mathbf{B}} & 0 & -\bar{\mathbf{A}}
\end{bmatrix} \preceq 0}.
\end{aligned}
\end{equation}
In \eqref{overapp}, the objective function corresponds to the size of the set $\bar{\mathcal{I}}$ \cite[\S 2.2]{bisoffi2021trade} and the constraints ensure the next result.
\begin{fact}[{\cite[\S 5.1]{bisoffi2021trade}}]\label{fact:overapp_set}
If $\bar{\mathbf{A}}$ and $\bar{\mathbf{B}}$ are a solution to \eqref{overapp}, the set $\bar{\mathcal{I}}$ satisfies $\mathcal{I}\subseteq\bar{\mathcal{I}}$.
\end{fact}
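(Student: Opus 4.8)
The plan is to establish the inclusion by the sufficiency (``lossy'') direction of the matrix S-procedure, exactly as in the reference \cite[\S5.1]{bisoffi2021trade}; I would present it at the level of a sketch. First I would rewrite membership in each $\mathcal{I}_i$ as a quadratic matrix inequality in the matrix variable. Abbreviating $z_i := \smat{Z(x(t_i)) \\ W(x(t_i))u(t_i)}$ and setting $\zeta := \AB^\top$, the constraint defining $\mathcal{I}_i$ in \eqref{uni_dataset} forces $d = \dot x(t_i) - \zeta^\top z_i$, hence $\AB \in \mathcal{I}_i$ iff $|\dot x(t_i) - \zeta^\top z_i|^2 \le \delta$. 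Because a rank-one matrix satisfies $vv^\top \preceq \delta I$ if and only if $|v|^2 \le \delta$ (test the quadratic form against an arbitrary $x$ for one direction, and against $v$ itself for the other), this is equivalent to $(\dot x(t_i) - \zeta^\top z_i)(\dot x(t_i) - \zeta^\top z_i)^\top - \delta I \preceq 0$; expanding the outer product and substituting the definitions \eqref{sol_overapp} of $\boldsymbol{A}_i,\boldsymbol{B}_i,\boldsymbol{C}_i$ turns this into $\boldsymbol{C}_i + \Tp(\boldsymbol{B}_i^\top\zeta) + \zeta^\top\boldsymbol{A}_i\zeta \preceq 0$.

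Next I would take an arbitrary $\AB \in \mathcal{I} = \bigcap_i \mathcal{I}_i$, so the above inequality holds for $i = 0,\dots,T-1$; multiplying by the nonnegative multipliers $\tau_i$ returned by \eqref{overapp} and summing gives $\sum_{i=0}^{T-1}\tau_i\big(\boldsymbol{C}_i + \Tp(\boldsymbol{B}_i^\top\zeta) + \zeta^\top\boldsymbol{A}_i\zeta\big) \preceq 0$. The target is then to deduce $\bar{\mathbf{B}}^\top\bar{\mathbf{A}}^{-1}\bar{\mathbf{B}} + \Tp(\bar{\mathbf{B}}^\top\zeta) + \zeta^\top\bar{\mathbf{A}}\zeta \preceq I$, i.e.\ $\AB \in \bar{\mathcal{I}}$.

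The core step is to extract from the $3\times 3$ block LMI in \eqref{overapp} the matrix inequality
\[
\bar{\mathbf{B}}^\top\bar{\mathbf{A}}^{-1}\bar{\mathbf{B}} - I + \Tp(\bar{\mathbf{B}}^\top\zeta) + \zeta^\top\bar{\mathbf{A}}\zeta \;\preceq\; \sum_{i=0}^{T-1}\tau_i\big(\boldsymbol{C}_i + \Tp(\boldsymbol{B}_i^\top\zeta) + \zeta^\top\boldsymbol{A}_i\zeta\big),
\]
valid for \emph{every} $\zeta$; combined with the previous inequality this yields $\AB \in \bar{\mathcal{I}}$. To obtain it, write $M$ for the $3\times 3$ matrix in \eqref{overapp} (so $M \preceq 0$) and denote its blocks $\Lambda := -I-\sum_i\tau_i\boldsymbol{C}_i$, $\Theta := \bar{\mathbf{B}}-\sum_i\tau_i\boldsymbol{B}_i$, $\Xi := \bar{\mathbf{A}}-\sum_i\tau_i\boldsymbol{A}_i$. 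With $V := \bmat{I \\ \zeta \\ \bar{\mathbf{A}}^{-1}\bar{\mathbf{B}}}$ — well defined since $\bar{\mathbf{A}} \succ 0$ — a direct block multiplication gives $V^\top M V = \Lambda + \Tp(\Theta^\top\zeta) + \zeta^\top\Xi\zeta + \bar{\mathbf{B}}^\top\bar{\mathbf{A}}^{-1}\bar{\mathbf{B}}$, the only nonroutine arithmetic being the cancellation $\bar{\mathbf{B}}^\top\bar{\mathbf{A}}^{-1}\bar{\mathbf{B}} + \bar{\mathbf{B}}^\top\bar{\mathbf{A}}^{-1}\bar{\mathbf{B}} - \bar{\mathbf{B}}^\top\bar{\mathbf{A}}^{-1}\bar{\mathbf{A}}\bar{\mathbf{A}}^{-1}\bar{\mathbf{B}} = \bar{\mathbf{B}}^\top\bar{\mathbf{A}}^{-1}\bar{\mathbf{B}}$ in the $(3,3)$ contribution. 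Since $M \preceq 0$ implies $V^\top M V \preceq 0$, and since unpacking $\Lambda,\Theta,\Xi$ and using linearity of $\Tp(\cdot)$ rewrites $V^\top M V \preceq 0$ as the displayed inequality, the proof is complete. Equivalently, one may reach the same conclusion by a Schur complement of $M$ with respect to its $(3,3)$-block $-\bar{\mathbf{A}}$, followed by a congruence with $\bmat{I \\ \zeta}$.

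I do not expect a genuine obstacle: this is only the sufficiency half of the S-procedure, so no converse or tightness claim is needed, and feasibility of \eqref{overapp} (with $\bar{\mathbf{A}}\succ0$, $\tau_i\ge0$) is part of the hypothesis. The one place to be careful is bookkeeping — correctly matching the norm-ball description of $\mathcal{I}_i$ to the coefficient matrices \eqref{sol_overapp} through the rank-one equivalence $vv^\top \preceq \delta I \Leftrightarrow |v|^2 \le \delta$, and identifying the multiplier $V$ (equivalently the Schur complement) that reproduces the extra term $\bar{\mathbf{B}}^\top\bar{\mathbf{A}}^{-1}\bar{\mathbf{B}}$ present in the definition of $\bar{\mathcal{I}}$ but absent from $\mathcal{I}$.
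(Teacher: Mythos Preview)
Your argument is correct. The paper itself does not supply a proof of this statement: it is recorded as a Fact and referred to \cite[\S5.1]{bisoffi2021trade}. Your sketch reproduces precisely the sufficiency direction of the matrix S-procedure used in that reference: rewrite each $\mathcal{I}_i$ as the quadratic matrix inequality $\boldsymbol{C}_i + \Tp(\boldsymbol{B}_i^\top\zeta) + \zeta^\top\boldsymbol{A}_i\zeta \preceq 0$, combine with nonnegative multipliers $\tau_i$, and extract from the $3\times3$ LMI in \eqref{overapp} the domination inequality by congruence with $V=\bmat{I\\\zeta\\\bar{\mathbf{A}}^{-1}\bar{\mathbf{B}}}$ (equivalently, Schur complement on the $(3,3)$-block followed by congruence with $\smat{I\\\zeta}$). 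The bookkeeping you flag --- the rank-one equivalence $vv^\top\preceq\delta I \Leftrightarrow |v|^2\le\delta$ and the cancellation producing the single $\bar{\mathbf{B}}^\top\bar{\mathbf{A}}^{-1}\bar{\mathbf{B}}$ term --- is handled correctly. There is nothing to add.
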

The so-obtained $\bar{\mathcal{I}}$ is thus an overapproximation of $\mathcal{I}$ and we use it since, unlike the set $\mathcal{I}$, it is a matrix ellipsoid and this enables the developments of the sequel. 
We would like to reassure the reader on the feasibility of the optimization program in \eqref{overapp} by recalling the next fact, with definitions
\begin{align*}
Z_0&:= \bmat{Z(x(t_0))\hspace*{25pt} & \dots & Z(x(t_{T-1}))\hspace*{37pt}} \in \mathbb{R}^{N \times T},\\
W_0&:= \bmat{
W(x(t_0))u(t_0) & \dots&  W(x(t_{T-1})) u(t_{T-1})
} \in \mathbb{R}^{M \times T}.
\end{align*}

\begin{fact}[{\cite[Lemma~2]{luppi2023data}}]\label{fact_full_rank}
If the matrix $\smat{Z_0 \\ W_0}$ has full row rank, then the optimization program in~\eqref{overapp} is feasible.
\end{fact}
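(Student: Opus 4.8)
The plan is to exhibit an explicit feasible point of the semidefinite program~\eqref{overapp}, built from a single scalar parameter $\tau>0$ that is chosen small only at the very end. Introduce the shorthands $\boldsymbol{Z}:=\smat{Z_0 \\ W_0}$ and $D:=\bmat{\dot{x}(t_0) & \cdots & \dot{x}(t_{T-1})}$. Reading off~\eqref{sol_overapp} and summing, one gets $\sum_{i=0}^{T-1}\boldsymbol{A}_i=\boldsymbol{Z}\boldsymbol{Z}^\top$, $\sum_{i=0}^{T-1}\boldsymbol{B}_i=-\boldsymbol{Z}D^\top$ and $\sum_{i=0}^{T-1}\boldsymbol{C}_i=DD^\top-T\delta I$. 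The full-row-rank hypothesis enters exactly here: it is equivalent to $P:=\boldsymbol{Z}\boldsymbol{Z}^\top\succ 0$, so that $P^{-1}$ is well defined and the matrix ellipsoid $\bar{\mathcal{I}}$ can be made nondegenerate. (The same hypothesis also makes $\mathcal{I}$ itself bounded, which is why one expects to be able to enclose it in a matrix ellipsoid at all.)

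Next I would adopt the ansatz $\tau_0=\cdots=\tau_{T-1}=\tau$, $\bar{\mathbf{A}}=\tau P$ and $\bar{\mathbf{B}}=\tau\sum_{i=0}^{T-1}\boldsymbol{B}_i=-\tau\boldsymbol{Z}D^\top$; this particular $\bar{\mathbf{B}}$ is chosen precisely so as to cancel the coupling against $\sum_i\boldsymbol{B}_i$. With these choices $\bar{\mathbf{A}}\succ 0$ and $\tau_i\ge 0$ hold for every $\tau>0$, while the blocks in positions $(2,1)$ and $(2,2)$ of the $3\times 3$ matrix in~\eqref{overapp} vanish identically; since the $(3,2)$-block is already $0$, the whole middle block row and column of that matrix are zero, and discarding this zero block row and column preserves the sign-definiteness of the matrix. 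The constraint thus reduces to
\[
\bmat{-I-\tau(DD^\top-T\delta I) & -\tau D\boldsymbol{Z}^\top \\ -\tau\boldsymbol{Z}D^\top & -\tau P}\preceq 0 ,
\]
and, since $-\tau P\prec 0$, a single Schur complement against the lower-right block turns this into $\tau S\preceq I$, where $S:=T\delta I-DD^\top+D\boldsymbol{Z}^\top P^{-1}\boldsymbol{Z}D^\top$ is a fixed symmetric matrix determined by the data.

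Finally, choosing any $\tau>0$ with $\tau\,\lambda_{\max}(S)\le 1$ (and any $\tau>0$ at all if $\lambda_{\max}(S)\le 0$) gives $\tau S\preceq I$, so the constructed tuple $(\bar{\mathbf{A}},\bar{\mathbf{B}},\tau_0,\dots,\tau_{T-1})$ meets every constraint of~\eqref{overapp}, and since $-\log\det\bar{\mathbf{A}}=-\log\det(\tau P)$ is finite there, the program is feasible. I expect the only non-routine step to be guessing the ansatz, especially the choice of $\bar{\mathbf{B}}$ that annihilates the off-diagonal blocks; after that the argument is one block elimination, one Schur complement, and the elementary remark that a fixed symmetric matrix scaled by a small enough positive constant is dominated by $I$. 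Throughout, the rank hypothesis is used only to ensure $P\succ 0$.
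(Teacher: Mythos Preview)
Your argument is correct. The paper does not give its own proof of this fact; it simply quotes the result as \cite[Lemma~2]{luppi2023data}. Your explicit construction---taking $\tau_i\equiv\tau$, $\bar{\mathbf{A}}=\tau\boldsymbol{Z}\boldsymbol{Z}^\top$, $\bar{\mathbf{B}}=-\tau\boldsymbol{Z}D^\top$ so that the entire middle block row and column vanish, then eliminating that block and applying a Schur complement to reduce to $\tau S\preceq I$ for a fixed $S$---is a clean self-contained verification, and the full-row-rank hypothesis is used exactly where it must be, to ensure $\boldsymbol{Z}\boldsymbol{Z}^\top\succ 0$.
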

Intuitively, collecting more data points can ensure that $\left[\begin{smallmatrix}Z_0 \\ W_0\end{smallmatrix}\right]$ becomes full row rank if it is not, since these additional data points would constitute additional columns of $\left[\begin{smallmatrix}Z_0 \\ W_0\end{smallmatrix}\right]$; hence, Fact~\ref{fact_full_rank} suggests that the more data points, the higher the chance that $\left[\begin{smallmatrix}Z_0 \\ W_0\end{smallmatrix}\right]$ has full row rank and, in turn, that \eqref{overapp} is feasible.

If $\bar{\mathbf{A}}$ and $\bar{\mathbf{B}}$ are a solution to \eqref{overapp}, we have $\bar{\mathbf{A}} \succ 0$ by construction. Then, we define
\begin{equation}\label{solution_overapp}
\Zbb:=-\bar{\mathbf{A}}^{-1} \bar{\mathbf{B}} \text { and } \bar{\mathbf{Q}}:=I,
\end{equation}
and rewrite the set $\bar{\mathcal{I}}$ as
\begin{equation}
\label{set_overapproximation}
\bar{\mathcal{I}}=\big\{\AB = \zeta^\top = (\Zbb+\bar{\mathbf{A}}^{-\frac{1}{2}} \Upsilon \bar{\mathbf{Q}}^{\frac{1}{2}} )^\top \colon \|\Upsilon\| \le 1 \big\} .
\end{equation}

\subsection{Motivation}

One of our subsequent results will be data-driven input-to-state stabilization with respect to actuator disturbances.
On the surface, the model-based result in Fact~\ref{fact_GAS_ISS} establishes a relationship between global asymptotic stability (GAS) and input-to-state stability with respect to actuator disturbances.
However, Fact~\ref{fact_GAS_ISS} would not allow for data-driven control design since the controller in Fact~\ref{fact_GAS_ISS} is constructed based on a precise dynamical model of the system, which is not available in our data-driven setting.
The unviability of this approach is our motivation to explore a new design technique for data-driven input-to-state stabilization. 
Let us explain this point in detail.

First, the next theorem provides data-based conditions for global asymptotic stabilization through comparison functions using Lemma~\ref{lem:classk}, starting from the model-based result \cite[p.~440]{sontag1989smooth}\footnote{As argued in \cite[p.~440]{sontag1989smooth}, if the derivative of the Lyapunov function along solutions is negative definite, then it can be upperbounded by the opposite of a suitable class $\mathcal{K}_\infty$ function of the norm of the state.}.

\begin{theorem}
\label{thm:bilinear_GAS}
For data points $\{u(t_i), x(t_i), \dot{x}(t_i)\}_{i=0}^{T-1}$ and under Assumptions~\ref{assumpt:sys} and \ref{assumpt:noise}, let the optimization program \eqref{overapp} be feasible. 
Suppose there exist a scalar $\mu>0$, a polynomial vector $k$ with $k(0)=0$, polynomials $\alpha_1$, \dots, $\alpha_3$, $V$, $\lambda$, SOS polynomials $s_{\alpha_1}$, $s_{\alpha_2}$, $s_\lambda$ and an SOS polynomial matrix $S_\nabla$ such that for all $r$ and $x$,%
\begin{subequations}\label{v_gas_data}%
\begin{align}
&\alpha_i(r)=\sum_{k=1}^{N_i} c_{i k} r^{2 k}, c_{i 1} \geq 0, \dots, c_{i N_i} \geq 0,  \sum_{k=1}^{N_i} c_{i k} \geq \mu  \qquad \text {for } i=1, 2, 3
\label{v_gas_data_a}\\
&V(x)-\alpha_1(|x|)=s_{\alpha_1}(x),\quad \alpha_2(|x|)-V(x)=s_{\alpha_2}(x),\label{v_gas_data_b}\\
&\lambda(x)-\mu=s_\lambda(x),\label{v_gas_data_c}\\
&
\bmat{
\alpha_3(|x|)+\frac{\partial{V}}{\partial{x}}(x)\boldsymbol{\Zbb}^\top
\smat{Z(x)\\W(x)k(x)} &\star&\star\\
\mathbf{\bar{Q}}^\frac{1}{2}\frac{\partial{V}}{\partial{x}}(x)^\top &-2\lambda(x) I&\star\\ \lambda(x) \mathbf{\bar{A}}^{-\frac{1}{2}}\left[\begin{smallmatrix}Z(x)\\W(x)k(x)\end{smallmatrix}\right]&0&-2\lambda(x) I
} =-S_\nabla(x). \label{v_gas_data_d}
\end{align}
\end{subequations}
Then, the origin of the closed-loop system $\dot{x}=AZ(x)+BW(x)k(x)$ 
is globally asymptotically stable for all $\AB \in \mathcal{I}$, and in particular for $\AB[\star]$.
\end{theorem}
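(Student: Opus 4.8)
The plan is to establish global asymptotic stability via a Lyapunov argument, using $V$ as the candidate Lyapunov function and converting the matrix equality \eqref{v_gas_data_d} into the standard dissipation inequality $\langle \nabla V(x), AZ(x)+BW(x)k(x)\rangle \le -\alpha_3(|x|)$ valid for \emph{all} $\AB \in \mathcal{I}$. First I would check that $V$ is positive definite, radially unbounded, and vanishes at zero: by \eqref{v_gas_data_a}--\eqref{v_gas_data_b}, $\alpha_1$ and $\alpha_2$ are class $\mathcal{K}_\infty$ by Lemma~\ref{lem:classk} (since the coefficients are nonnegative and sum to at least $\mu>0$), and $\alpha_1(|x|) \le V(x) \le \alpha_2(|x|)$ because $s_{\alpha_1}, s_{\alpha_2}$ are SOS hence nonnegative; in particular $V(0)=0$ since $\alpha_1(0)=\alpha_2(0)=0$. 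Likewise $\alpha_3$ is class $\mathcal{K}_\infty$, and $\lambda(x) \ge \mu > 0$ for all $x$ by \eqref{v_gas_data_c} and $s_\lambda$ SOS.

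The core step is the dissipation inequality. Fix any $\AB \in \mathcal{I} \subseteq \bar{\mathcal{I}}$ (Fact~\ref{fact:overapp_set}), so by \eqref{set_overapproximation} there is $\Upsilon$ with $\|\Upsilon\|\le 1$ and $\AB = (\Zbb + \bar{\mathbf{A}}^{-\frac12}\Upsilon\bar{\mathbf{Q}}^{\frac12})^\top$. Evaluating \eqref{v_gas_data_d} at a fixed $x$ and pre-/post-multiplying the negative-semidefinite matrix by $\bmat{1 & \Upsilon^\top & \Upsilon^\top}$ and its transpose — more precisely, by the vector $(1, \Upsilon^\top z, \Upsilon^\top z)$ for a suitable contraction, or by performing the Schur-complement / S-procedure manipulation used in \cite{luppi2023data} — one absorbs the two $-2\lambda(x)I$ blocks. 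Concretely, the $(2,2)$ and $(3,3)$ diagonal blocks together with the off-diagonal terms form, after a Schur complement on the lower $2\times2$ block, the expression $\frac{1}{2\lambda(x)}\big\|\bar{\mathbf{Q}}^{\frac12}\nabla V(x)\big\|\,\|\Upsilon\|\,\big\|\bar{\mathbf{A}}^{-\frac12}\smat{Z(x)\\W(x)k(x)}\big\|$-type bounds that, combined with $\|\Upsilon\|\le1$ and $2ab \le \lambda a^2 + \lambda^{-1}b^2$, reproduce exactly the cross term $\nabla V(x)\,(\bar{\mathbf{A}}^{-\frac12}\Upsilon\bar{\mathbf{Q}}^{\frac12})^\top\smat{Z(x)\\W(x)k(x)}$. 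Adding this to the $(1,1)$ block $\alpha_3(|x|) + \nabla V(x)\Zbb^\top\smat{Z(x)\\W(x)k(x)}$ and using $S_\nabla(x)\succeq0$ yields
\begin{equation*}
\alpha_3(|x|) + \nabla V(x)\,\bmat{A~\,B}\smat{Z(x)\\W(x)k(x)} \le 0 \quad\text{for all }\AB\in\mathcal{I},
\end{equation*}
i.e. $\dot V \le -\alpha_3(|x|)$ along the closed loop $\dot x = AZ(x)+BW(x)k(x)$. Since $k(0)=0$ and $Z(0)=0$, the origin is an equilibrium.

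Finally I would invoke the standard Lyapunov theorem for global asymptotic stability (e.g. \cite[Thm.~10.4.1]{isidori1999nonlinear} specialized to zero input, or the classical Barbashin--Krasovskii theorem): $V$ continuously differentiable, positive definite, radially unbounded, with $\dot V$ negative definite along solutions, implies the origin of $\dot x = AZ(x)+BW(x)k(x)$ is globally asymptotically stable; this holds for every $\AB\in\mathcal{I}$, and in particular for $\AB[\star]$, which lies in $\mathcal{I}$ as noted after \eqref{cap_dataset}. The main obstacle is the middle step: carrying out the Schur-complement / completion-of-squares manipulation that turns the $3\times3$ matrix inequality with the $\lambda(x)$ multipliers into the scalar dissipation inequality uniformly over the matrix-ellipsoid parametrization $\Upsilon$, making sure the $\lambda(x)>0$ bound is used correctly so that the division by $2\lambda(x)$ and the Young's-inequality weighting are legitimate for all $x$. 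This is precisely the robustification trick from \cite{luppi2023data}, adapted here with the extra $\alpha_3(|x|)$ term, and it is where all the real content of the proof sits.
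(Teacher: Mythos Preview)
Your proposal is correct and matches the paper's approach (the paper omits this proof, deferring to the argument of Theorem~\ref{thm:biconvex_ISS_w} specialized to $w\equiv0$): from \eqref{v_gas_data_d} one obtains a negative-semidefinite block matrix, a Schur complement on the lower $2\times2$ block yields the scalar inequality containing $\tfrac{1}{2\lambda(x)}\big|\bar{\mathbf Q}^{1/2}\nabla V(x)\big|^2+\tfrac{\lambda(x)}{2}\big|\bar{\mathbf A}^{-1/2}\smat{Z(x)\\W(x)k(x)}\big|^2$, and completion of squares then bounds the $\Upsilon$-dependent cross term uniformly over $\|\Upsilon\|\le1$, giving the dissipation inequality for every $\AB\in\bar{\mathcal I}\supseteq\mathcal I$. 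Your opening suggestion to pre-/post-multiply by $\bmat{1 & \Upsilon^\top & \Upsilon^\top}$ is dimensionally inconsistent (the three blocks have sizes $1$, $n$, $N{+}M$ while $\Upsilon\in\real^{(N+M)\times n}$), but the Schur-complement/Young-inequality route you describe immediately afterward is exactly the right mechanism.
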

\begin{proof}
The proof follows from arguments similar to those in~\cite[Thm. 3]{bisoffi2022data} and in the following Theorems~\ref{thm:biconvex_ISS_w} and \ref{thm:biconvex_ISS_d}, so it is omitted.
\end{proof}

Second, consider 
\begin{equation*}
    \dot{x}=A_\star Z(x)+B_\star W(x)(u+w)
\end{equation*}
where $w\in\mathbb{R}^m$ is an actuator disturbance and suppose we set out to use the controller $k$ returned by Theorem~\ref{thm:bilinear_GAS} to obtain a modified controller $\tilde{k}$ that ensures input-to-state stabilization of
\begin{equation*}
    \dot{x}=A_\star Z(x)+B_\star W(x)(\tilde{k}(x)+w)
\end{equation*}
with respect to $w$, using Fact~\ref{fact_GAS_ISS}.
This would lead to the next result.

\begin{lemma}
Under the hypothesis of Theorem~\ref{thm:bilinear_GAS}, among which \eqref{v_gas_data} being satisfied, then for every $\AB\in\mathcal{I}$ (in particular, $\AB[\star]$), there exists a polynomial controller $\tilde{k}$ such that 
\begin{equation}\label{system_poly_cl}
\dot{x}=AZ(x)+BW(x)(\tilde{k}(x)+w)
\end{equation}
is ISS with respect to $w$.
\end{lemma}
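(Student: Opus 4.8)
The plan is to adapt the proof of Fact~\ref{fact_GAS_ISS} (Sontag's ``smooth stabilizability $\Rightarrow$ smooth input-to-state stabilizability''), exploiting the crucial extra information provided by Theorem~\ref{thm:bilinear_GAS}: besides the stabilizing polynomial feedback $k$, it also supplies a \emph{polynomial} ISS-type Lyapunov function $V$ for the closed loop. With a polynomial $V$ at hand, the feedback produced by Sontag's construction turns out to be polynomial, which is exactly what the statement requires. I would stress in advance that the resulting $\tilde k$ will depend on $B$, hence on the unknown model; this is precisely why this route, although correct, does not yield a single data-driven controller, thereby motivating the developments of Section~\ref{sec:main}.

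Fix $\AB \in \mathcal{I}$. \emph{Step 1 (strict dissipation inequality).} I would first extract from the hypotheses of Theorem~\ref{thm:bilinear_GAS} that
\begin{equation}\label{eq:strictdiss_plan}
\langle \nabla V(x),\, A Z(x) + B W(x) k(x) \rangle \le -\alpha_3(|x|) \qquad \forall x \in \real^n,\ \forall \AB \in \mathcal{I},
\end{equation}
which is essentially the content of the (omitted) proof of that theorem. Since $\lambda(x) \ge \mu > 0$ by \eqref{v_gas_data_c}, taking the Schur complement of \eqref{v_gas_data_d} with respect to its two $-2\lambda(x)I$ blocks produces the scalar inequality $\alpha_3(|x|) + \langle \nabla V(x), \Zbb^\top \smat{Z(x)\\ W(x)k(x)}\rangle + \frac{1}{2\lambda(x)}|\bar{\mathbf{Q}}^{\h}\nabla V(x)|^2 + \frac{\lambda(x)}{2}\big|\bar{\mathbf{A}}^{-\h}\smat{Z(x)\\ W(x)k(x)}\big|^2 \le 0$; substituting the parametrization $\AB = (\Zbb + \bar{\mathbf{A}}^{-\h}\Upsilon\bar{\mathbf{Q}}^{\h})^\top$ with $\|\Upsilon\| \le 1$ of $\bar{\mathcal{I}} \supseteq \mathcal{I}$ from \eqref{set_overapproximation} and Fact~\ref{fact:overapp_set}, and bounding the resulting cross term with $\|\Upsilon\| \le 1$ and Young's inequality, yields \eqref{eq:strictdiss_plan}.

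\emph{Step 2 (feedback modification and ISS-Lyapunov estimate).} Evaluating \eqref{v_gas_data_b} at $x = 0$ and using that SOS polynomials are nonnegative forces $V(0) = 0$ and $V \ge 0$ on $\real^n$, hence $\nabla V(0) = 0$; therefore $\tilde k(x) := k(x) - W(x)^\top B^\top \nabla V(x)$ is a polynomial vector with $\tilde k(0) = 0$. Along \eqref{system_poly_cl} one computes $\langle \nabla V(x), A Z(x) + B W(x)(\tilde k(x)+w)\rangle = \langle \nabla V(x), A Z(x) + B W(x)k(x)\rangle - |W(x)^\top B^\top \nabla V(x)|^2 + \langle W(x)^\top B^\top \nabla V(x), w\rangle$, so that \eqref{eq:strictdiss_plan} together with Young's inequality on the last term gives
\begin{equation*}
\langle \nabla V(x), A Z(x) + B W(x)(\tilde k(x)+w)\rangle \le -\alpha_3(|x|) - \h|W(x)^\top B^\top \nabla V(x)|^2 + \h|w|^2 \le -\alpha_3(|x|) + \h|w|^2 .
\end{equation*}
By \eqref{v_gas_data_a} and Lemma~\ref{lem:classk}, $\alpha_1,\alpha_2,\alpha_3 \in \mathcal{K}_\infty$, while \eqref{v_gas_data_b} yields $\alpha_1(|x|) \le V(x) \le \alpha_2(|x|)$; hence $V$ is an ISS-Lyapunov function for \eqref{system_poly_cl} in the sense of Definition~\ref{def:ISS_lyap_fun} with $\alpha := \alpha_3$ and $\sigma(s) := \h s^2 \in \mathcal{K}$, and Fact~\ref{fact:equiv_iss} concludes that \eqref{system_poly_cl} is ISS with respect to $w$.

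The main obstacle is Step~1: turning the matrix-SOS certificate \eqref{v_gas_data_d} into a genuine dissipation inequality valid for \emph{every} model in the matrix ellipsoid $\bar{\mathcal{I}}$ requires the Schur complement (legitimate precisely because $\lambda(x) \ge \mu > 0$) and the matrix S-procedure built on $\|\Upsilon\| \le 1$. Everything else is elementary and, importantly, \emph{global} (Young's inequality is), so — unlike a direct appeal to Fact~\ref{fact_GAS_ISS}, which would only hand us a smooth feedback and leave open whether it can be taken polynomial — no polynomial approximation on compact sets is needed: the polynomiality of $\tilde k$ is inherited verbatim from that of $k$, $W$, and $V$.
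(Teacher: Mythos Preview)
Your proof is correct. Both you and the paper proceed by Sontag's damping modification of the stabilizing feedback $k$, but with different damping gains. The paper quotes Sontag's original construction verbatim, setting $\tilde k(x) := k(x) - \tfrac{\rho(x)}{2m}\big(\tfrac{\partial V}{\partial x}(x)BW(x)\big)^\top$ with $\rho(x) := -\tfrac{\partial V}{\partial x}(x)\big(AZ(x)+BW(x)k(x)\big)$; your choice $\tilde k(x) := k(x) - W(x)^\top B^\top \nabla V(x)$ uses a unit gain instead. Your simpler variant works precisely because Theorem~\ref{thm:bilinear_GAS} already hands you $\alpha_3 \in \mathcal{K}_\infty$ (via \eqref{v_gas_data_a} and Lemma~\ref{lem:classk}), so a single Young-inequality step closes the dissipation estimate without the state-dependent scaling Sontag needs in the general smooth case. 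A small side benefit of your construction is that $\tilde k$ depends only on $B$ (not on $A$), though of course this still leaves $\tilde k$ model-dependent, which is the whole point of the lemma as motivation for Section~\ref{sec:main}. Your Step~1 is exactly the argument the paper omits in the proof of Theorem~\ref{thm:bilinear_GAS}, so nothing is missing there.
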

\begin{proof}
The proof follows by borrowing the arguments in \cite[Proof of Theorem~1]{sontag1989smooth}.
Let $V$ and $k$ be the Lyapunov function and controller returned by~\eqref{v_gas_data} and let, for each $\AB\in\bar{\mathcal{I}}$ and $x$,
\begin{align*}
\rho(x) & := -\frac{\partial V}{\partial x}(x) ( AZ(x)+B W(x) k(x) ), 
\quad \tilde{k}(x) := k(x) - \frac{\rho(x)}{2 m} \bigg( \frac{\partial V}{\partial x}(x) B W(x) \bigg)^\top.
\end{align*}
As in \cite[Proof of Theorem~1]{sontag1989smooth}, one can show that $V$ is an ISS-Lyapunov function for the closed loop \eqref{system_poly_cl} and \eqref{system_poly_cl} is input-to-state stable with respect to $w$.
By its definition, $\tilde{k}$ is a polynomial vector.
\end{proof}

The result shows that for any  pair $\AB \in \mathcal{I}$, an input-to-state stabilizing controller (with respect to $w$) and a corresponding ISS-Lyapunov function can be designed. 
Unfortunately, since the $\AB[\star]$ defining the actual dynamics is unknown, the previous result cannot explicitly return a polynomial controller $\tilde k$ that makes the actual closed-loop system input-to-state stable. In fact, what is needed is a controller $\tilde k$ that, for all $\AB \in \mathcal{I}$, ensures input-to-state stabilization of the system corresponding to $\AB$.
This amounts to the design of a feedback controller $k$ and, as per Definition~\ref{def:ISS_lyap_fun}, an ISS-Lyapunov function $V$ and class $\mathcal{K}_{\infty}$ functions $\alpha_1$, $\alpha_2$, $\alpha_3$, $\alpha_4$ so that the conditions in~\eqref{ISS_lyap_fun_prop} hold true for all $\AB \in \mathcal{I}$, based on our priors and the available data. 
This yields the problem statements below.

\begin{problem}\label{problem:actuator_disturbance}
For data points $\{u(t_i), x(t_i), \dot{x}(t_i)\}_{i=0}^{T-1}$ and the resulting set in $\mathcal{I}$ in \eqref{cap_dataset}, design a controller $u=k(x)$, an ISS-Lyapunov function $V$ and class $\mathcal{K}_{\infty}$ functions $\alpha_1$, $\alpha_2, \alpha_3, \alpha_4$ such that for all $\AB \in \mathcal{I}$, $x$ and $w$,
\begin{subequations}
\begin{align}
&\alpha_1(|x|) \leq V(x) \leq \alpha_2(|x|), \label{problem_actuator_a}\\
&\Big\langle\nabla V(x),\AB
\smat{
Z(x) \\
W(x)(k(x)+w)
}
\Big\rangle \le -\alpha_3(|x|)+\alpha_4(|w|) .\label{problem_actuator_b}
\end{align}
\end{subequations}
\end{problem}
\begin{problem}\label{problem:process_noise}
For data points $\{u(t_i), x(t_i), \dot{x}(t_i)\}_{i=0}^{T-1}$ and the resulting set in $\mathcal{I}$ in \eqref{cap_dataset}, design a controller $u=k(x)$, an ISS-Lyapunov function $V$ and class $\mathcal{K}_{\infty}$ functions $\alpha_1$, $\alpha_2, \alpha_3, \alpha_4$ such that for all $\AB \in \mathcal{I}$, $x$ and $d$,
\begin{subequations}
\begin{align}
&\alpha_1(|x|) \leq V(x) \leq \alpha_2(|x|),\label{problem_process_a}\\
&\Big\langle\nabla V(x),\AB
\smat{
Z(x) \\
W(x) k(x)
} + d \Big\rangle \le -\alpha_3(|x|)+\alpha_4(|d|).\label{problem_process_b}
\end{align}
\end{subequations}
\end{problem}

We emphasize that these problems involve finding a solution $V$ to a dissipativity-like inequality as those in \eqref{problem_actuator_b} and \eqref{problem_process_b}, from data. 
Section \ref{sec:main} is devoted to solving these problems.

\section{Main results}\label{sec:main}

In this section, we provide our main results. With matrices $\bar{\mathbf{A}}$, $\Zbb$, and $\bar{\mathbf{Q}}$ obtained from noisy data $\{u(t_i), x(t_i), \dot{x}(t_i)\}_{i=0}^{T-1}$ via \eqref{overapp}, we establish input-to-state stability of all the dynamics consistent with data in $\mathcal{I}$ with respect to exogenous inputs (including actuator and process disturbances) by joint synthesis of ISS-Lyapunov functions and state feedback controllers.

\subsection{ISS with respect to actuator disturbances}

In this subsection, we consider the system
\begin{equation}
\label{system_op_nl_act_dist}
    \dot{x}= A_\star Z(x)+B_\star W(x)(u+w) =: f^{\tu{a}}_{A_\star,B_\star}(x,w,u)
\end{equation}
where $w\in\mathbb{R}^m$ is the actuator disturbance.
In the next theorem, we provide conditions to find a controller $u=k(x)$ that makes the closed loop input-to-state stable with respect to $w$ for all possible dynamics consistent with data, as certified by an ISS-Lyapunov function and comparison functions.

\begin{theorem}
\label{thm:biconvex_ISS_w}
For data points $\{u(t_i), x(t_i), \dot{x}(t_i)\}_{i=0}^{T-1}$ and under Assumptions~\ref{assumpt:sys}-\ref{assumpt:noise}, let the optimization program \eqref{overapp} be feasible.
Suppose there exist a scalar $\mu>0$, a polynomial vector $k$ with $k(0)=0$, polynomials $\alpha_1$, \dots, $\alpha_4$, $V$, $\lambda$, SOS polynomials $s_{\alpha_1}$, $s_{\alpha_2}$, $s_\lambda$ and an SOS polynomial matrix $S_\nabla$ such that for all $r$, $x$ and $w$,
\begin{subequations}
\label{biconvex_ISS_w}
\begin{align}
& \alpha_i(r)=\sum_{k=1}^{N_i} c_{i k} r^{2 k}, c_{i 1} \geq 0, \dots, c_{i N_i} \ge 0, \sum_{k=1}^{N_i} c_{i k} \geq \mu   \qquad\text {for } i=1, \dots, 4,
\label{biconvex_ISS_w:alpha}\\
&V(x)-\alpha_1(\vert x\vert)=s_{\alpha_1}(x),\quad \alpha_2(|x|)-V(x)=s_{\alpha_2}(x),\label{biconvex_ISS_w:bounds_V}\\
&\lambda(x,w)-\mu=s_\lambda(x,w),\label{biconvex_ISS_w:lambda}\\
& \bmat{
\left\{\begin{matrix}
\alpha_3(|x|)-\alpha_4(|w|)\\
+\frac{\partial{V}}{\partial{x}}(x)\boldsymbol{\Zbb}^\top  \smat{Z(x)\\W(x)(k(x)+w)} 
\end{matrix}\right\} & \star & \star\\
\mathbf{\bar{Q}}^\frac{1}{2}\frac{\partial{V}}{\partial{x}}(x)^\top &-2\lambda(x,w) I & \star\\ 
\lambda(x,w) \mathbf{\bar{A}}^{-\frac{1}{2}}
\smat{Z(x)\\W(x)(k(x)+w)} & 0 &-2\lambda(x,w) I 
} =-S_\nabla(x,w). \label{biconvex_ISS_w:dissip}
\end{align}
\end{subequations}
Then, the system
\begin{equation}
\label{theorem_disturbance_sys}
\dot{x}=\AB
\bmat{
Z(x) \\
W(x) (k(x)+w)
}
= f^{\tu{a}}_{A,B}(x,w,k(x))
\end{equation}
is ISS with respect to actuator disturbance $w$ for all $\AB \in \bar{\mathcal{I}}$, and in particular for $\AB[\star]$.
\end{theorem}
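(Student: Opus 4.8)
The plan is to exhibit the polynomial $V$ as a \emph{single} ISS-Lyapunov function (in the sense of Definition~\ref{def:ISS_lyap_fun}) that works for \eqref{theorem_disturbance_sys} for every $\AB\in\bar{\mathcal{I}}$ at once, and then to conclude through Fact~\ref{fact:equiv_iss}. Before anything, I would observe that the right-hand side of \eqref{theorem_disturbance_sys} is polynomial (hence locally Lipschitz) and vanishes at $x=0$, $w=0$ because $Z(0)=0$ and $k(0)=0$, so \eqref{theorem_disturbance_sys} indeed fits the setting of Definitions~\ref{def:ISS}--\ref{def:ISS_lyap_fun}.

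The bound conditions are immediate. By \eqref{biconvex_ISS_w:alpha} and Lemma~\ref{lem:classk}, each $\alpha_i$, $i=1,\dots,4$, is a class $\mathcal{K}_\infty$ function (so in particular of class $\mathcal{K}$). Since $s_{\alpha_1}$, $s_{\alpha_2}$ are SOS and hence pointwise nonnegative, \eqref{biconvex_ISS_w:bounds_V} gives $\alpha_1(|x|)\le V(x)\le\alpha_2(|x|)$ for all $x$, which is \eqref{ISS_lyap_fun_prop:bounds} with $\underline{\alpha}=\alpha_1$ and $\overline{\alpha}=\alpha_2$. In the same way, \eqref{biconvex_ISS_w:lambda} with $s_\lambda$ SOS and $\mu>0$ yields $\lambda(x,w)\ge\mu>0$ for all $(x,w)$, a fact I will use to apply a Schur complement.

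The heart of the proof is the dissipation inequality \eqref{ISS_lyap_fun_prop:dissip_ineq}. I would take an arbitrary $\AB\in\bar{\mathcal{I}}$ and write it, through the parametrization \eqref{set_overapproximation} and the symmetry of $\bar{\mathbf{Q}}^{\h}$ and $\bar{\mathbf{A}}^{-\h}$, as $\AB=\Zbb^\top+\bar{\mathbf{Q}}^{\h}\Upsilon^\top\bar{\mathbf{A}}^{-\h}$ for some $\Upsilon$ with $\|\Upsilon\|\le1$. Abbreviating $m:=\smat{Z(x)\\W(x)(k(x)+w)}$, $a:=\bar{\mathbf{Q}}^{\h}\tfrac{\partial V}{\partial x}(x)^\top$ and $b:=\bar{\mathbf{A}}^{-\h}m$, the quantity $\tfrac{\partial V}{\partial x}(x)\AB\,m$ splits as $\tfrac{\partial V}{\partial x}(x)\Zbb^\top m+a^\top\Upsilon^\top b$. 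Now $S_\nabla$ being an SOS polynomial matrix forces the matrix in \eqref{biconvex_ISS_w:dissip} to be negative semidefinite for all $(x,w)$; since its lower-right $2\times2$ block $\diag(-2\lambda I,-2\lambda I)$ is negative definite, the Schur complement with respect to that block is negative semidefinite, which — recalling that the $(2,1)$ and $(3,1)$ blocks of \eqref{biconvex_ISS_w:dissip} are $a$ and $\lambda(x,w)\,b$ — reads
\begin{equation*}
\alpha_3(|x|)-\alpha_4(|w|)+\tfrac{\partial V}{\partial x}(x)\Zbb^\top m+\tfrac{1}{2\lambda(x,w)}|a|^2+\tfrac{\lambda(x,w)}{2}|b|^2\le0 \qquad\forall (x,w).
\end{equation*}
It then remains to dominate the cross term: by Young's inequality $\tfrac{1}{2\lambda}|a|^2+\tfrac{\lambda}{2}|b|^2\ge|a|\,|b|$, and $a^\top\Upsilon^\top b\le|a|\,\|\Upsilon\|\,|b|\le|a|\,|b|$ because $\|\Upsilon\|\le1$, so combining with the previous display,
\begin{equation*}
\tfrac{\partial V}{\partial x}(x)\,\AB\,m\le-\alpha_3(|x|)+\alpha_4(|w|)\qquad\forall x,\ \forall w,\ \forall\,\AB\in\bar{\mathcal{I}},
\end{equation*}
which is \eqref{ISS_lyap_fun_prop:dissip_ineq} with $\alpha=\alpha_3$ and $\sigma=\alpha_4$.

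Collecting these facts, $V$ (continuously differentiable, being a polynomial) is an ISS-Lyapunov function for \eqref{theorem_disturbance_sys}, so \eqref{theorem_disturbance_sys} is ISS with respect to $w$ for every $\AB\in\bar{\mathcal{I}}$ by Fact~\ref{fact:equiv_iss}; and since $\AB[\star]\in\mathcal{I}\subseteq\bar{\mathcal{I}}$ by Fact~\ref{fact:overapp_set}, the claim holds for $\AB[\star]$ in particular. The step I expect to be the main obstacle is the uniform robustification over the matrix ellipsoid $\bar{\mathcal{I}}$: one has to verify that the multiplier $\lambda(x,w)$ is positioned in \eqref{biconvex_ISS_w:dissip} precisely so that the Schur complement balances the Young's-inequality bound for \emph{all} $\Upsilon$ with $\|\Upsilon\|\le1$ and uniformly in $(x,w)$; once that is in place the remaining manipulations are routine, and the same template should carry over to the companion theorems of Section~\ref{sec:main}.
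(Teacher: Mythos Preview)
Your proof is correct and follows essentially the same approach as the paper's own proof: parametrize $\bar{\mathcal{I}}$ via~\eqref{set_overapproximation}, reduce the robust dissipation inequality to a scalar inequality through a Schur complement of the lower-right block of~\eqref{biconvex_ISS_w:dissip}, and then absorb the cross term $a^\top\Upsilon^\top b$ via Young's inequality (what the paper calls ``completing the square'' and cites as \cite[Lemma~2]{chen2023data}). The only cosmetic difference is direction: the paper argues backward from the desired inequality~\eqref{f77} to the SOS condition, whereas you argue forward from~\eqref{biconvex_ISS_w:dissip} to the dissipation bound; your added remark that $F(0,0)=0$ is a welcome detail the paper leaves implicit.
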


\begin{proof}
By \eqref{biconvex_ISS_w:alpha} and Lemma~\ref{lem:classk}, $\alpha_1, \dots, \alpha_4$ are class $\mathcal{K}_\infty$ functions (when their domain is restricted to $\mathbb{R}_{\geq0}$). Since $s_{\alpha_1}$ and $s_{\alpha_2}$ are SOS polynomials, \eqref{biconvex_ISS_w:bounds_V} implies \eqref{problem_actuator_a}.

If we have for $V$, $\alpha_3$, $\alpha_4$ such that for all $x$, $w$ and all $\Upsilon$ with $\|\Upsilon\|\leq1$,
\begin{align}
\label{f77}
&\alpha_3(|x|)-\alpha_4(|w|) +
\frac{\partial{V}}{\partial{x}}(x)
\Big(\Zbb ^\top +\mathbf{\bar{Q}}^\frac{1}{2}\Upsilon^\top \mathbf{\bar{A}}^{-\frac{1}{2}} \Big) 
\smat{
Z(x)\\W(x)(k(x)+w)}
\leq 0,
\end{align}
then, by \eqref{set_overapproximation}, \eqref{problem_actuator_b} holds for all $x$, $w$ and all $\AB \in\bar{\mathcal{I}} \supseteq \mathcal{I}$.
Since \eqref{problem_actuator_a} and \eqref{problem_actuator_b} correspond to \eqref{ISS_lyap_fun_prop:bounds} and \eqref{ISS_lyap_fun_prop:dissip_ineq}, the continuously differentiable $V$ is, by Definition~\ref{def:ISS_lyap_fun}, an ISS-Lyapunov function for \eqref{theorem_disturbance_sys} for all $\AB \in \overline{\mathcal{I}}$; since $\AB[\star] \in \mathcal{I} \subseteq \overline{\mathcal{I}}$ by Assumption~\ref{assumpt:noise}, this and Fact~\ref{fact:equiv_iss} would prove the statement. 
We show then that by \eqref{biconvex_ISS_w:dissip}, \eqref{f77} holds for all $x$, $w$ and all $\Upsilon$ with $\|\Upsilon\| \leq 1$.
We rewrite \eqref{f77} as
\begin{align}
& \alpha_3(|x|)-\alpha_4(|w|) 
+ \frac{\partial{V}}{\partial{x}}(x)
\Zbb ^\top 
\smat{
Z(x)\\W(x)(k(x)+w)} 
\label{f7} \\
& 
+ \Tp\bigg\{ \frac{1}{2} \smat{
Z(x)\\W(x)(k(x)+w)}^\top
\mathbf{\bar{A}}^{-\frac{1}{2}}
\Upsilon \mathbf{\bar{Q}}^\frac{1}{2}
\frac{\partial{V}}{\partial{x}}(x)^\top  \bigg\}
\leq 0 . \notag
\end{align}
Since $\mu>0$ and $s_\lambda$ is an SOS polynomial, \eqref{biconvex_ISS_w:lambda} implies $\lambda(x,w)>0$ for all $x$ and $w$. So, by completing the square \cite[Lemma~2]{chen2023data}, \eqref{f7} is valid for all $x$, $w$ and all $\Upsilon$ with $\|\Upsilon\| \le 1$  if, for all $x$, $w$,
\begin{align}
&\alpha_3(|x|)-\alpha_4(|w|)+\frac{\partial{V}}{\partial{x}}(x)\boldsymbol{\Zbb}^\top \smat{Z(x)\\W(x)(k(x)+w)} +\frac{1}{2\lambda(x,w)}\frac{\partial{V}}{\partial{x}}(x)\mathbf{\bar{Q}}^\frac{1}{2}\mathbf{\bar{Q}}^\frac{1}{2}\frac{\partial{V}}{\partial{x}}(x)^\top \label{f8}
\\
&+\frac{\lambda(x,w)}{2}\left[\begin{smallmatrix}Z(x)\\W(x)(k(x)+w)\end{smallmatrix}\right]^\top \mathbf{\bar{A}}^{-\frac{1}{2}}\mathbf{\bar{A}}^{-\frac{1}{2}}\smat{Z(x)\\W(x)(k(x)+w)} \leq 0. \notag
\end{align}
Applying Schur complement \cite[p.~28]{boyd1994linear} to \eqref{f8} yields that, for all $x$, $w$,
\begin{equation}\label{Th2-1}
\bmat{
\left\{
\begin{matrix}
\alpha_3(| x |)-\alpha_4(|w|)\\+\frac{\partial{V}}{\partial{x}}(x)\Zbb^\top 
\smat{Z(x)\\W(x)(k(x)+w)}
\end{matrix}
\right\} &\star&\star\\
\mathbf{\bar{Q}}^\frac{1}{2}\frac{\partial{V}}{\partial{x}}(x)^\top & -2\lambda(x,w) I & \star\\ 
\lambda(x,w) \mathbf{\bar{A}}^{-\frac{1}{2}}
\smat{Z(x)\\W(x)(k(x)+w)} & 0 & -2\lambda(x,w) I 
} \preceq 0,
\end{equation}
which is implied by \eqref{biconvex_ISS_w:dissip}.
\end{proof}

Some comments on Theorem~\ref{thm:biconvex_ISS_w} are appropriate. 
\begin{enumerate}[left=-3pt,label=\alph*)]
\item \eqref{biconvex_ISS_w:alpha} ensures that $\alpha_1, \dots, \alpha_4$ are class $\mathcal{K}_{\infty}$ functions by Lemma~\ref{lem:classk}.
We consider even powers of $r$ in these comparison functions in~\eqref{biconvex_ISS_w:alpha} because such powers cancel the square roots in $\sqrt{\sum_{i=1}^n x_i^2}=|x|$ or $\sqrt{\sum_{i=1}^n w_i^2}=|w|$ so that $\alpha_1(|x|)$, $\alpha_2(|x|)$, $\alpha_3(|x|)$, $\alpha_4(|w|)$ in \eqref{biconvex_ISS_w:bounds_V} and \eqref{biconvex_ISS_w:lambda} are polynomials in the components of $x$ and $w$, and, thus, SOS tools can be applied to solve \eqref{biconvex_ISS_w}. 
By allowing some coefficients of functions $\alpha_i$, $i=1, \dots, 4$, to be zero, we let the SOS program \eqref{biconvex_ISS_w} design the maximum degree of the $\alpha_i$'s, which is only required to be nongreater than a positive integer $N_i$ of our choice.
\item \eqref{biconvex_ISS_w:bounds_V} entails lower\slash upper bounds on the ISS-Lyapunov function $V$, as in \eqref{problem_actuator_a}.
\item \eqref{biconvex_ISS_w:lambda} entails positivity of the multiplier $\lambda$ as needed to complete a square in the proof.
\item \eqref{biconvex_ISS_w:dissip} corresponds to guaranteeing the dissipativity-like inequality \eqref{problem_actuator_b} for all polynomial dynamics that are consistent with data and uses the quantities $\Zbb$, $\bar{\mathbf{Q}}$, $\bar{\mathbf{A}}$ that were obtained from $\{u(t_i), x(t_i), \dot{x}(t_i)\}_{i=0}^{T-1}$ by solving \eqref{overapp}.
\end{enumerate}

Theorem~\ref{thm:biconvex_ISS_w} effectively solves Problem~\ref{problem:actuator_disturbance} since $\mathcal{I} \subseteq \bar{\mathcal{I}}$ by Fact~\ref{fact:overapp_set}.

\begin{remark}
Due to \eqref{biconvex_ISS_w:alpha}, the bounds $\alpha_1$ and $\alpha_2$ take a specific form, which may be unnecessarily specific.
Instead, for a positive definite and radially unbounded $\ell$ and an SOS polynomial $s_\ell$, one can ask that, for all $x$,
\begin{align}
\label{biconvex_ISS_w:bounds_V_alternative}
V(x)-\ell(x)=s_\ell(x).
\end{align}
Then, \cite[Lemma 4.3]{khalil2002nonlinear} guarantees the existence of class $\mathcal{K}_\infty$ functions $\underline{\alpha}$ and $\overline{\alpha}$ such that for all $x$, $\underline{\alpha}(|x|) \le V(x) \le \overline{\alpha}(|x|)$,
so \eqref{biconvex_ISS_w:bounds_V_alternative} effectively replaces \eqref{biconvex_ISS_w:bounds_V}.
On the other hand, knowing the expressions of $\underline{\alpha}$ or $\overline{\alpha}$ may be relevant, e.g., to obtain the actual bound on solutions in~\eqref{ISS_bound_solutions} from the dissipativity-like inequality in~\eqref{ISS_lyap_fun_prop:dissip_ineq} on the ISS-Lyapunov function.
The aforementioned approach does not lead to an explicit expression for $\underline{\alpha}$ or $\overline{\alpha}$ whereas Theorem~\ref{thm:biconvex_ISS_w} does.
\end{remark}

\subsection{ISS with respect to process disturbances}\label{subsec:process_noise}

In this subsection, we consider the system
\begin{equation}\label{system_op_nl_pro_noise}
    \dot{x}=A_\star Z(x)+B_\star W(x)u+d =: f^{\tu{p}}_{A_\star,B_\star}(x,d,u)
\end{equation}
where $d\in\mathbb{R}^n$ is the process disturbance.
In the next theorem, we provide conditions to find a controller $u=k(x)$ that makes the closed loop input-to-stable with respect to $d$ for all possible dynamics consistent with data, as certified by an ISS-Lyapunov function and comparison functions.

\begin{theorem}
\label{thm:biconvex_ISS_d}
For data points $\{u(t_i), x(t_i), \dot{x}(t_i)\}_{i=0}^{T-1}$ and under Assumptions~\ref{assumpt:sys}-\ref{assumpt:noise}, let the optimization program in \eqref{overapp} be feasible. Suppose there exist a scalar $\mu>0$, a polynomial vector $k$ with $k(0)=0$, polynomials $\alpha_1$, \dots, $\alpha_4$, $V$, $\lambda$, SOS polynomials $s_{\alpha_1}$, $s_{\alpha_2}$, $s_\lambda$, and an SOS polynomial matrix $S_\nabla$ such that for all $r$, $x$ and $d$%
\begin{subequations}
\label{biconvex_ISS_d}
\begin{align}
&\alpha_i(r)=\sum_{k=1}^{N_i} c_{i k} r^{2 k}, c_{i 1} \ge 0, \dots, c_{i N_i} \ge 0, \sum_{k=1}^{N_i} c_{i k} \ge \mu  \qquad \text {for } i=1, \dots, 4
\label{biconvex_ISS_d:alpha}\\
&V(x)-\alpha_1(|x|) = s_{\alpha_1}(x),\quad \alpha_2(|x|)-V(x) =s_{\alpha_2}(x),\label{biconvex_ISS_d:bounds_V}\\
&\lambda(x,d)-\mu=s_\lambda(x,d),\label{biconvex_ISS_d:lambda}\\
& 
\bmat{
\left\{\begin{matrix}
\alpha_3(|x|)-\alpha_4(|d|)\\
\!+\frac{\partial{V}}{\partial{x}}(x)\Big(\boldsymbol{\Zbb}^\top \smat{Z(x)\\W(x)k(x)} +d\Big) \!\end{matrix}\right\}&\star&\star\\\mathbf{\bar{Q}}^\frac{1}{2}\frac{\partial{V}}{\partial{x}}(x)^\top &-2\lambda(x,d) I&\star\\ \lambda(x,d) \mathbf{\bar{A}}^{-\frac{1}{2}}
\smat{Z(x)\\W(x)k(x)} &0&-2\lambda(x,d) I
}\!
=-S_\nabla(x,d).\label{biconvex_ISS_d:dissip}
\end{align}
\end{subequations}
Then, the system
\begin{align*}
\dot{x}=
\bmat{A & B}
\bmat{Z(x)\\ W(x) k(x)} +d = f^{\tu{p}}_{A,B}(x,d,k(x))
\end{align*}
is ISS with respect to process disturbance $d$ for all $\AB \in \bar{\mathcal{I}}$, and in particular for $\AB[\star]$.
\end{theorem}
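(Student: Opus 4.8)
The plan is to follow the proof of Theorem~\ref{thm:biconvex_ISS_w} almost verbatim, the single structural change being that the disturbance now enters the closed loop additively rather than through the input channel. First, by \eqref{biconvex_ISS_d:alpha} and Lemma~\ref{lem:classk}, each of $\alpha_1,\dots,\alpha_4$ is a class $\mathcal{K}_\infty$ function when its domain is restricted to $\real_{\ge 0}$; since $s_{\alpha_1}$ and $s_{\alpha_2}$ are SOS, \eqref{biconvex_ISS_d:bounds_V} gives \eqref{problem_process_a}, i.e.\ the two-sided bound \eqref{ISS_lyap_fun_prop:bounds} on $V$. It then remains to certify the dissipativity-like inequality \eqref{problem_process_b} for every $\AB\in\bar{\mathcal{I}}$; since $\mathcal{I}\subseteq\bar{\mathcal{I}}$ by Fact~\ref{fact:overapp_set} and $\AB[\star]\in\mathcal{I}$ by Assumption~\ref{assumpt:noise}, this, together with Definition~\ref{def:ISS_lyap_fun} and Fact~\ref{fact:equiv_iss}, proves that the closed loop is ISS with respect to $d$ for all $\AB\in\bar{\mathcal{I}}$, and in particular for $\AB[\star]$.

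Second, I would use the parametrization \eqref{set_overapproximation}, writing $\AB=\Zbb^\top+\mathbf{\bar{Q}}^{\h}\Upsilon^\top\mathbf{\bar{A}}^{-\h}$ with $\|\Upsilon\|\le1$. Since $d$ is added to the whole vector field, the term $\frac{\partial V}{\partial x}(x)\,d$ does not couple with the uncertainty $\Upsilon$: only $\alpha_4(|d|)$ and $\frac{\partial V}{\partial x}(x)\,d$ carry the disturbance. Hence it suffices to show that, for all $x$, $d$ and all $\Upsilon$ with $\|\Upsilon\|\le1$,
\begin{align*}
&\alpha_3(|x|)-\alpha_4(|d|)+\frac{\partial V}{\partial x}(x)\Big(\Zbb^\top\smat{Z(x)\\W(x)k(x)}+d\Big)\\
&\quad+\Tp\Big\{\h\smat{Z(x)\\W(x)k(x)}^\top\mathbf{\bar{A}}^{-\h}\Upsilon\mathbf{\bar{Q}}^{\h}\frac{\partial V}{\partial x}(x)^\top\Big\}\le0,
\end{align*}
which is the exact analogue of \eqref{f7} with $W(x)k(x)$ replacing $W(x)(k(x)+w)$ and with the extra additive $\frac{\partial V}{\partial x}(x)\,d$ carried along. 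Using that $\lambda(x,d)>0$ for all $x$, $d$ (from \eqref{biconvex_ISS_d:lambda}, $\mu>0$ and $s_\lambda$ SOS), I would complete the square via \cite[Lemma~2]{chen2023data} to eliminate $\Upsilon$, obtaining the $\Upsilon$-free sufficient condition, for all $x$, $d$,
\begin{align*}
&\alpha_3(|x|)-\alpha_4(|d|)+\frac{\partial V}{\partial x}(x)\Big(\Zbb^\top\smat{Z(x)\\W(x)k(x)}+d\Big)+\frac{1}{2\lambda(x,d)}\frac{\partial V}{\partial x}(x)\mathbf{\bar{Q}}^{\h}\mathbf{\bar{Q}}^{\h}\frac{\partial V}{\partial x}(x)^\top\\
&\quad+\frac{\lambda(x,d)}{2}\smat{Z(x)\\W(x)k(x)}^\top\mathbf{\bar{A}}^{-\h}\mathbf{\bar{A}}^{-\h}\smat{Z(x)\\W(x)k(x)}\le0.
\end{align*}

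Third, I would apply the Schur complement \cite[p.~28]{boyd1994linear} to this scalar inequality, which produces exactly the $3\times3$ symmetric matrix appearing on the left-hand side of \eqref{biconvex_ISS_d:dissip}; since $S_\nabla$ is an SOS polynomial matrix, \eqref{biconvex_ISS_d:dissip} forces that matrix to be negative semidefinite for all $x$, $d$, and the chain of implications back up to \eqref{problem_process_b} closes. I do not expect a conceptually hard step: the content is the same completion-of-square and Schur-complement bookkeeping as in Theorem~\ref{thm:biconvex_ISS_w}. The one place demanding care — and the only substantive difference from the actuator case — is the placement of $d$ in the matrix inequality: it appears in the $(1,1)$ entry through $\frac{\partial V}{\partial x}(x)\,d$ (and in $\alpha_4(|d|)$) but is absent from the $(2,1)$ and $(3,1)$ blocks, precisely because $d$ is not filtered through the data-carrying, uncertain factor $W(x)$, unlike the actuator input $w$ in Theorem~\ref{thm:biconvex_ISS_w}.
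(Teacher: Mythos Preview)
Your proposal is correct and follows essentially the same approach as the paper: the paper's own proof simply says to repeat the arguments of Theorem~\ref{thm:biconvex_ISS_w} with the process disturbance in place of the actuator disturbance, invoking Lemma~\ref{lem:classk} and the Schur complement, which is exactly the completion-of-square and Schur-complement bookkeeping you have spelled out in detail. Your observation about where $d$ sits in the matrix (only in the $(1,1)$ block, not in the off-diagonal data-dependent blocks) is the correct structural adaptation.
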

\begin{proof}
Following the same arguments in the proof of Theorem~\ref{thm:biconvex_ISS_w} and considering the process disturbance as the exogenous input, it is easy to verify that \eqref{biconvex_ISS_d:alpha} and \eqref{biconvex_ISS_d:bounds_V} imply \eqref{problem_process_a}. Resorting to Lemma~\ref{lem:classk} and Schur complement, \eqref{biconvex_ISS_d:lambda} and \eqref{biconvex_ISS_d:dissip} imply \eqref{problem_process_b} for all $\AB \in \bar{\mathcal{I}} \supseteq \mathcal{I}$.
Hence, the conclusion holds by Definition~\ref{def:ISS_lyap_fun}.
\end{proof}
Theorem~\ref{thm:biconvex_ISS_d} effectively solves Problem~\ref{problem:process_noise} since $\mathcal{I} \subseteq \bar{\mathcal{I}}$ by Fact~\ref{fact:overapp_set}.

\begin{remark}
If $\mu$, $k$, $\alpha_1, \dots, \alpha_4$, $V$, $\lambda$, $s_{\alpha_1}$, $s_{\alpha_2}$, $s_\lambda$, $S_\nabla$ are as hypothesized in Theorem~\ref{thm:biconvex_ISS_w} or in Theorem~\ref{thm:biconvex_ISS_d}, then $\mu$, $k$, $\alpha_1, \dots, \alpha_3$, $V$, $\lambda(\cdot,0)$, $s_{\alpha_1}$, $s_{\alpha_2}$, $s_\lambda(\cdot,0)$, $S_\nabla(\cdot,0)$ satisfy the hypothesis of Theorem~\ref{thm:bilinear_GAS}.
This implication is a ``data-driven counterpart'' of the well-known fact that input-to-state stability with respect to an exogeneous input implies global asymptotic stability for the exogenous input set to zero (the so-called $0$-GAS).
In this sense, the feasibility set of the SOS program associated with Theorem~\ref{thm:biconvex_ISS_w} or Theorem~\ref{thm:biconvex_ISS_d} is a subset of the feasibility set of the SOS program associated with Theorem~\ref{thm:bilinear_GAS}.
\end{remark}

Admittedly, the SOS programs \eqref{biconvex_ISS_w} and \eqref{biconvex_ISS_d} are biconvex \cite[Def.~1.3]{gorski2007biconvex}, and thus nonconvex, 
due to the products of decision variables $V$, $k$ and $\lambda$, $k$ in \eqref{biconvex_ISS_w:dissip} and \eqref{biconvex_ISS_d:dissip}.
To address this nonconvexity, we adopt the widespread alternate approach \cite{jarvis2005control}, \cite{majumdar2013control} where, in a first step, we fix $k$ and solve \eqref{biconvex_ISS_w} and \eqref{biconvex_ISS_d}, which have so become convex with respect to the remaining decision variables and, in a second step, we fix $V$, $\lambda$ and solve \eqref{biconvex_ISS_w} and \eqref{biconvex_ISS_d}, which have so become convex with respect to the remaining decision variables.
It is also important to recognize that the effectiveness of this approach depends on finding an initial guess for $k$ or $V$, $\lambda$.
Overall, this alternate approach has proven to be successful in our numerical examples of Section~\ref{sec:exper}.

\subsection{Construction of convex SOS programs for ISS}\label{subsec:convex}

In this section, we pursue a different approach that circumvents the biconvexity and the need of an initial guess of the approach proposed in Theorems~\ref{thm:biconvex_ISS_w} and \ref{thm:biconvex_ISS_d}.
This different approach allows constructing convex SOS programs to synthesize an ISS-Lyapunov function and a controller, as required by Problems~\ref{problem:actuator_disturbance} and \ref{problem:process_noise}.
This is achieved by employing polynomial parametrizations of the ISS-Lyapunov function and controller with the forms $Z(x)^\top P^{-1}Z(x)$ and $Y(x)P^{-1}Z(x)$, for some matrix $P = P^\top \succ0$ and polynomial matrix $Y$.
This parametrization is more specific than those in Theorems~\ref{thm:biconvex_ISS_w} and \ref{thm:biconvex_ISS_d}, where the ISS-Lyapunov function and controller are searched among polynomials without the aforementioned specific structure.
This parametrization is borrowed from~\cite{prajna2004nonlinear,guo2021data}, where global asymptotic stabilization was considered.

Moreover, while the function library $Z$ may have relatively many monomials to match the actual vector field $f_\star$, see Remark~\ref{rmk:regressor}, it may be unnecessary and potentially counterproductive to have all these monomials also in the ISS-Lyapunov function and controller parametrized as $Z(x)^\top P^{-1}Z(x)$ and $Y(x)P^{-1}Z(x)$, with $P$ and $Y$ as above.
Instead, one can use a polynomial vector $\hat{Z}$ of smaller size than $Z$ and parametrize ISS-Lyapunov function and controller as $\hat{Z}(x)^\top P^{-1} \hat{Z}(x)$ and $Y(x)P^{-1} \hat{Z}(x)$.
This may also have the benefit of reducing the order of the SOS programs to be solved.
The function library $\hat{Z}$ needs to satisfy the next assumption.

\begin{assumption}\label{assumpt:Zhat}
For function library $Z$, there exist a polynomial vector $\hat{Z}\colon \real^n \to \real^{\hat{N}}$ and a polynomial matrix $H \colon \real^n \to \real^{N\times {\hat{N}}}$ such that $\hat{Z}(x)=0$ if and only if $x=0$, $Z(x)=H(x)\hat{Z}(x)$ for all $x$, and $x\mapsto |\hat{Z}(x)|$ is radially unbounded, i.e., $|\hat{Z}(x)|\rightarrow\infty$ as $|x|\rightarrow\infty$.
\end{assumption}

In Assumption~\ref{assumpt:Zhat}, we recover $Z = \hat{Z}$ if we set $x \mapsto H(x)=I_N$.
Also, the conditions of Assumption~\ref{assumpt:Zhat} on the choice of $\hat{Z}$ and $H$ are easy to satisfy. 
For instance, if we take the vector $x=(x_1,\dots,x_n)$ as a part of the vector $\hat{Z}(x)$, there always exists some $H$ such that Assumption~\ref{assumpt:Zhat} holds. 
The choice of $H$ is not unique. 
For example, for the function library $Z(x)=(x_1^2,x_1x_2,x_2^2)$, if we take $\hat{Z}(x)=(x_1,x_2)$, then $H(x)=\smat{x_1&0\\x_2&0\\0&x_2}$ or $H(x)=\smat{x_1&0\\0&x_1\\0&x_2}$ can be chosen so that $Z(x)=H(x)\hat{Z}(x)$ for all $x$.

By substituting $Z$ with $H\hat{Z}$ thanks to Assumption~\ref{assumpt:Zhat}, the input-affine polynomial systems \eqref{system_op_nl_act_dist} and \eqref{system_op_nl_pro_noise} are rewritten as
\begin{equation}\label{convex_sys_d}
\dot{x}= f^{\tu{a}}_{A_\star,B_\star}(x,w,u) = A_\star H(x)\hat{Z}(x)+B_\star W(x)(u+w)
\end{equation}
and
\begin{equation}\label{convex_sys_w}
\dot{x}= f^{\tu{p}}_{A_\star,B_\star}(x,d,u) = A_\star H(x)\hat{Z}(x)+B_\star W(x)u+d.
\end{equation}

For~\eqref{convex_sys_d}, we present a theorem for data-driven convex design of a controller and an ISS-Lyapunov function that ensure the closed-loop system is input-to-state stable with respect to actuator disturbances.

\begin{theorem}
\label{thm:convex_ISS_w}
For data points $\{u(t_i), x(t_i), \dot{x}(t_i)\}_{i=0}^{T-1}$ and under Assumptions~\ref{assumpt:sys}, \ref{assumpt:noise} and \ref{assumpt:Zhat}, let the optimization program in \eqref{overapp} be feasible. 
For given symmetric polynomial matrix $\Xi$ and scalar $\varepsilon >0$,
suppose there exist a matrix $P=P^\top \succ 0$, polynomial matrices $Y$ and $\Gamma$, symmetric polynomial matrix $\Theta$, a scalar $\eta>0$, a polynomial $\lambda$, an SOS polynomial $s_\lambda$, SOS polynomial matrices $S_\Theta$ and $S_\partial$ such that for all $r$, $x$ and $w$,
\begin{subequations}%
\label{convex_ISS_w}
\begin{align}
&\Gamma(r)=\sum_{k=0}^{N} C_{k} r^{2k}, C_0 = C_0^\top \succeq 0, \dots, C_N = C_N^\top \succeq 0, \sum_{k=0}^{N} C_{k} \succeq \varepsilon I,
\label{convex_ISS_w:gamma}\\
&\lambda(x,w)-\varepsilon=s_\lambda(x,w),\label{convex_ISS_w:lambda}\\
&\Theta(x)-\eta\Xi(x)=S_\Theta(x),\label{convex_ISS_w:Theta}\\
&
\bmat{
\left\{
\begin{matrix}
\Tp\bigg(\smat{H(x)P\\W(x)Y(x)}^\top \Zbb\frac{\partial \hat{Z}(x)}{\partial x}^\top \bigg) \\
+\Theta(x) +\lambda(x,w)\frac{\partial \hat{Z}(x)}{\partial x}\bar{\mathbf{Q}}\frac{\partial \hat{Z}(x)}{\partial x}^\top 
\end{matrix}
\right\}\hspace*{-2mm} &\star&\star\\[6.5mm]
\smat{0\\W(x)}^\top \Zbb \frac{\partial \hat{Z}(x)}{\partial x}^\top &-\Gamma(|w|) &\star\\        
\smat{H(x)P\\W(x)Y(x)} & \smat{0\\W(x)} & -\lambda(x,w)\bar{\mathbf{A}}
} = -S_\partial(x,w).\label{convex_ISS_w:dissipation}
\end{align}
\end{subequations}
Then, if the function 
\begin{align}
\label{convex_ISS_w:b_func}
b(\cdot):=\hat{Z}(\cdot)^{\top}P^{-1}\Xi(\cdot)P^{-1}\hat{Z}(\cdot)
\end{align}
is positive definite and radially unbounded, 
the closed-loop system
\begin{align*}
\dot{x}= \AB \bmat{H(x)\hat{Z}(x)\\ W(x) \big( Y(x)P^{-1}\hat{Z}(x) +w \big)}  = f^{\tu{a}}_{A,B}(x,w,Y(x)P^{-1}\hat{Z}(x))
\end{align*}
is ISS with respect to the actuator disturbance $w$ for all $\AB \in \bar{\mathcal{I}}$, and in particular for $\AB[\star]$.
\end{theorem}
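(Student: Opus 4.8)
The plan is to read off, from a solution of~\eqref{convex_ISS_w}, an explicit ISS-Lyapunov function and controller, verify the two properties of Definition~\ref{def:ISS_lyap_fun} for \emph{every} $\AB\in\bar{\mathcal{I}}$, and conclude via Fact~\ref{fact:equiv_iss}. Concretely, I would take $V(x):=\hat{Z}(x)^\top P^{-1}\hat{Z}(x)$ and $k(x):=Y(x)P^{-1}\hat{Z}(x)$; both are polynomials, $V$ is continuously differentiable, and $V(0)=0$, $k(0)=0$ because $\hat{Z}(0)=0$ by Assumption~\ref{assumpt:Zhat} (so the closed-loop field $f^{\tu{a}}_{A,B}(\cdot,\cdot,k(\cdot))$ is polynomial, hence locally Lipschitz, and vanishes at the origin for $w=0$). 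If $V$ and $k$ satisfy~\eqref{problem_actuator_a}--\eqref{problem_actuator_b} for every $\AB\in\bar{\mathcal{I}}$, then $V$ is an ISS-Lyapunov function for each such closed loop, hence each is ISS with respect to $w$ by Fact~\ref{fact:equiv_iss}; since $\AB[\star]\in\mathcal{I}\subseteq\bar{\mathcal{I}}$ by Fact~\ref{fact:overapp_set}, this includes the actual system.

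The bounds~\eqref{problem_actuator_a} are the easy part: as $P\succ0$, $\lambda_{\min}(P^{-1})|\hat{Z}(x)|^2\le V(x)\le\lambda_{\max}(P^{-1})|\hat{Z}(x)|^2$, and Assumption~\ref{assumpt:Zhat} ($\hat{Z}(x)=0\Leftrightarrow x=0$ and $|\hat{Z}(x)|\to\infty$ as $|x|\to\infty$) makes $x\mapsto|\hat{Z}(x)|^2$, hence $V$, positive definite and radially unbounded, so \cite[Lemma~4.3]{khalil2002nonlinear} supplies $\alpha_1,\alpha_2\in\mathcal{K}_\infty$ with $\alpha_1(|x|)\le V(x)\le\alpha_2(|x|)$.

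The heart of the proof is~\eqref{problem_actuator_b}. Put $\xi:=P^{-1}\hat{Z}(x)$, $\phi:=\frac{\partial\hat{Z}(x)}{\partial x}^\top\xi$, $m:=\smat{H(x)P\\W(x)Y(x)}\xi+\smat{0\\W(x)}w$; using $\hat{Z}=P\xi$ one computes $\langle\nabla V(x),f^{\tu{a}}_{A,B}(x,w,k(x))\rangle=2\phi^\top\AB\, m$. For $\AB\in\bar{\mathcal{I}}$ I would substitute $\AB=\Zbb^\top+\bar{\mathbf{Q}}^{\frac12}\Upsilon^\top\bar{\mathbf{A}}^{-\frac12}$ with $\|\Upsilon\|\le1$ from~\eqref{solution_overapp}--\eqref{set_overapproximation}, giving $2\phi^\top\AB\, m=2\phi^\top\Zbb^\top m+2(\bar{\mathbf{A}}^{-\frac12}m)^\top\Upsilon(\bar{\mathbf{Q}}^{\frac12}\phi)$. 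Since $\lambda(x,w)\ge\varepsilon>0$ by~\eqref{convex_ISS_w:lambda} with $s_\lambda$ SOS, completing the square (cf.~\cite[Lemma~2]{chen2023data}) bounds the $\Upsilon$-term by $\frac{1}{\lambda(x,w)}m^\top\bar{\mathbf{A}}^{-1}m+\lambda(x,w)\phi^\top\bar{\mathbf{Q}}\phi$. On the other hand, the $(3,3)$-block of~\eqref{convex_ISS_w:dissipation} is $-\lambda(x,w)\bar{\mathbf{A}}\prec0$, so taking the Schur complement of~\eqref{convex_ISS_w:dissipation} with respect to it, pre/post-multiplying by $(\xi,w)$ and using $m=\smat{HP\\WY}\xi+\smat{0\\W}w$ to merge the cross terms, gives exactly $2\phi^\top\Zbb^\top m+\xi^\top\Theta(x)\xi+\lambda(x,w)\phi^\top\bar{\mathbf{Q}}\phi-w^\top\Gamma(|w|)w+\frac{1}{\lambda(x,w)}m^\top\bar{\mathbf{A}}^{-1}m\le0$. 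Combining the two displays yields $\langle\nabla V(x),f^{\tu{a}}_{A,B}(x,w,k(x))\rangle\le-\xi^\top\Theta(x)\xi+w^\top\Gamma(|w|)w$ for all $\AB\in\bar{\mathcal{I}}$.

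It remains to dominate the right-hand side by class $\mathcal{K}_\infty$ functions of $|x|$ and $|w|$. By~\eqref{convex_ISS_w:Theta} with $S_\Theta$ SOS, $\xi^\top\Theta(x)\xi\ge\eta\,\xi^\top\Xi(x)\xi=\eta\,b(x)$, which by the hypothesis on $b$ and $\eta>0$ is positive definite and radially unbounded, so \cite[Lemma~4.3]{khalil2002nonlinear} gives $\alpha_3\in\mathcal{K}_\infty$ with $\alpha_3(|x|)\le\eta\,b(x)$. For the disturbance term,~\eqref{convex_ISS_w:gamma} gives $w^\top\Gamma(|w|)w=\sum_{k=0}^{N}|w|^{2k}\,w^\top C_k w\ge0$, vanishing for $w\ne0$ only if $w^\top(\sum_k C_k)w=0$ --- impossible since $\sum_k C_k\succeq\varepsilon I$ --- while for $|w|\ge1$ it exceeds $w^\top(\sum_k C_k)w\ge\varepsilon|w|^2$; thus $w\mapsto w^\top\Gamma(|w|)w$ is positive definite and radially unbounded, and \cite[Lemma~4.3]{khalil2002nonlinear} gives $\alpha_4\in\mathcal{K}_\infty$ with $w^\top\Gamma(|w|)w\le\alpha_4(|w|)$. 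Then~\eqref{problem_actuator_b} holds with these $\alpha_3,\alpha_4$, $V$ is an ISS-Lyapunov function, and the statement follows. I expect the main obstacle to be the bookkeeping in the previous paragraph: one must line up the $\lambda(x,w)$/$\lambda(x,w)^{-1}$ split produced by completing the square with the one produced by the Schur complement of~\eqref{convex_ISS_w:dissipation}, and check that the cross terms generated by $m$ reproduce precisely the off-diagonal $(2,1)$-entry of that LMI; note also that the hypothesis ``$b$ positive definite and radially unbounded'' is exactly what lets the specifically parametrized $V$ admit the class $\mathcal{K}_\infty$ lower bound $\alpha_3$ on its rate of decrease, which is why it cannot be dropped.
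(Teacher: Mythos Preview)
Your proposal is correct and follows essentially the same approach as the paper: take $V=\hat Z^\top P^{-1}\hat Z$, use Assumption~\ref{assumpt:Zhat} and \cite[Lemma~4.3]{khalil2002nonlinear} for the sandwich bounds, substitute the parametrization~\eqref{set_overapproximation} of $\bar{\mathcal I}$, complete the square with weight $\lambda(x,w)$, and recover the resulting inequality from~\eqref{convex_ISS_w:dissipation} via the Schur complement in the $(3,3)$-block; the bookkeeping you flag as the potential obstacle works out exactly as you describe. The only cosmetic difference is that the paper constructs $\alpha_4$ explicitly as $\alpha_4(r)=\sum_{k=1}^{N+1}\lambda_{\max}(C_{k-1})r^{2k}$ via Lemma~\ref{lem:classk}, whereas you obtain its existence from \cite[Lemma~4.3]{khalil2002nonlinear}; both are valid, the paper's choice just yields a closed-form gain.
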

\begin{proof}
\begingroup%
\thinmuskip=1mu plus 1mu minus 1mu
\medmuskip=2mu plus 2mu minus 2mu
\thickmuskip=3mu plus 3mu minus 3mu
Let us consider $x \mapsto V(x):=\hat{Z}(x)^\top P^{-1}\hat{Z}(x)$ as an ISS-Lyapunov function.
By Assumption~\ref{assumpt:Zhat}, $V \colon \real^n \to \real$ is continuous since $\hat{Z}$ is a polynomial vector; $V$ is positive definite because $\hat{Z}(x) = 0$ if and only if $x=0$ and $P = P^\top \succ 0$; $V$ is radially unbounded because $\hat{Z}$ is radially unbounded.
Hence, by~\cite[Lemma 4.3]{khalil2002nonlinear}, there exist class $\mathcal{K}_\infty$ functions $\alpha_1$ and $\alpha_2$ such that for all $x$
\begin{equation}
\label{convex_ISS_w:sandwich_V}
   \alpha_1(|x|) \le V(x) \le \alpha_2(|x|). 
\end{equation}
For all $x$, $BW(x)=\AB \smat{0\\W(x)}$ so that we have
\begin{align*}
\dot{x} & = f^{\tu{a}}_{A,B}(x,w,Y(x)P^{-1}\hat{Z}(x)) \\
& = \AB \smat{H(x)P\\ W(x)Y(x)} P^{-1}\hat{Z}(x)+ \AB \smat{0\\W(x)}w =: F^{\tu{a}}_{A,B}(x,w).
\end{align*}
The derivative of $V$ along solutions of the closed-loop system is
\begin{align*}
& \langle \nabla V(x), F^{\tu{a}}_{A,B}(x,w)\rangle = 2\hat{Z}(x)^\top P^{-1}\tfrac{\partial \hat{Z}(x)}{\partial x}\AB
\Big(
\smat{
H(x)P\\
W(x)Y(x)
}
P^{-1}\hat{Z}(x)
+
\smat{0\\W(x)} w
\Big)\\
& = 
\hat{Z}(x)^\top \! P^{-1}\!
\Tp\Big(\!
\tfrac{\partial \hat{Z}(x)}{\partial x}\AB \!
\smat{
H(x)P\\
W(x)Y(x)}
\!\Big)
P^{-1}\hat{Z}(x)
 +
2 \hat{Z}(x)^\top\! P^{-1} \tfrac{\partial \hat{Z}(x)}{\partial x}\AB  \!\smat{0\\W(x)} w\\
& = 
\smat{P^{-1}\hat{Z}(x)\\ w}^\top
\smat{
\Tp\Big(
\smat{
H(x)P\\
W(x)Y(x)}^\top \AB^\top \frac{\partial \hat{Z}(x)}{\partial x}^\top
\Big) & \frac{\partial \hat{Z}(x)}{\partial x}\AB \smat{0\\W(x)}  \\
\smat{0\\W(x)}^\top \AB^\top \frac{\partial \hat{Z}(x)}{\partial x}^\top & 0
}
\smat{P^{-1}\hat{Z}(x)\\ w}.
\end{align*}
Define 
\begin{align}
\label{convex_ISS_w:a}
a(x):=\hat{Z}(x)^\top P^{-1}\Theta(x)P^{-1}\hat{Z}(x).
\end{align}
We would like to show that
\begin{align}
& \forall x,w, \AB \in \bar{\mathcal{I}}, \label{convex_ISS_w:diss_ineq_almost_proof}\\
& 0 \ge \langle \nabla V(x), F^{\tu{a}}_{A,B}(x,w)\rangle + a(x) -  w^\top \Gamma(|w|) w \notag \\
& = 
\smat{P^{-1}\hat{Z}(x)\\ w}^\top
\smat{
\Tp\Big(
\smat{
H(x)P\\
W(x)Y(x)}^\top \AB^\top \frac{\partial \hat{Z}(x)}{\partial x}^\top
\Big) + \Theta(x) & \star  \\
\smat{0\\W(x)}^\top \AB^\top \frac{\partial \hat{Z}(x)}{\partial x}^\top & - \Gamma(|w|)
}
\smat{P^{-1}\hat{Z}(x)\\ w}. \notag
\end{align}
This holds if
\begin{align*}
& \forall x,w, \AB \in \bar{\mathcal{I}}, \quad 0 \succeq 
\smat{
\Tp\Big(
\smat{
H(x)P\\
W(x)Y(x)}^\top \AB^\top \frac{\partial \hat{Z}(x)}{\partial x}^\top
\Big) + \Theta(x) & \frac{\partial \hat{Z}(x)}{\partial x}\AB \smat{0\\W(x)}  \\
\smat{0\\W(x)}^\top \AB^\top \frac{\partial \hat{Z}(x)}{\partial x}^\top & - \Gamma(|w|)
}.
\end{align*}
By the equivalent parametrization of the set $\bar{\mathcal{I}}$ in~\eqref{set_overapproximation}, this holds if
\begin{align*}
& \forall x,w, \Upsilon  \text{ with } \| \Upsilon \| \le 1, \\
& 0 \succeq 
\smat{
\Tp\Big(
\smat{
H(x)P\\
W(x)Y(x)}^\top \Zbb \frac{\partial \hat{Z}(x)}{\partial x}^\top
\Big) + \Theta(x) & \star  \\
\smat{0\\W(x)}^\top \Zbb \frac{\partial \hat{Z}(x)}{\partial x}^\top & -\Gamma(|w|)
}+
\smat{
\Tp\Big(
\smat{
H(x)P\\
W(x)Y(x)}^\top \bar{\mathbf{A}}^{-\h} \Upsilon \bar{\mathbf{Q}}^\h \frac{\partial \hat{Z}(x)}{\partial x}^\top
\Big)  & \star  \\
\smat{0\\W(x)}^\top \bar{\mathbf{A}}^{-\h} \Upsilon \bar{\mathbf{Q}}^\h \frac{\partial \hat{Z}(x)}{\partial x}^\top & 0
}\\
& \!\! = \!\! \smat{
\Tp\Big(
\smat{
H(x)P\\
W(x)Y(x)}^\top \Zbb \frac{\partial \hat{Z}(x)}{\partial x}^\top \!
\Big) + \Theta(x) & \star  \\
\smat{0\\W(x)}^\top \Zbb \frac{\partial \hat{Z}(x)}{\partial x}^\top & - \Gamma(|w|)
}\!\!\!+\!\!
\Tp
\left(\!\!
\smat{
\smat{H(x)P\\W(x)Y(x)}^\top \bar{\mathbf{A}}^{-\h}\\
\smat{0\\W(x)}^\top \bar{\mathbf{A}}^{-\h}
}\!
\Upsilon
\smat{\bar{\mathbf{Q}}^\h \frac{\partial \hat{Z}(x)}{\partial x}^\top & 0}\!\!\!
\right)\!.
\end{align*}
Since $\varepsilon>0$ and $s_\lambda$ is an SOS polynomial, \eqref{convex_ISS_w:lambda} implies that $\lambda(x,w)>0$ for all $x$ and $w$. 
Then, the previous condition holds if
\begin{align*}
& \forall x,w, \Upsilon  \text{ with } \| \Upsilon \| \le 1, \\
& \smat{
\Tp\Big(
\smat{
H(x)P\\
W(x)Y(x)}^\top \Zbb \frac{\partial \hat{Z}(x)}{\partial x}^\top
\Big) + \Theta(x) & \star  \\
\smat{0\\W(x)}^\top \Zbb \frac{\partial \hat{Z}(x)}{\partial x}^\top & -\Gamma(|w|)
}+
\Tp
\left(
\smat{
\smat{H(x)P\\W(x)Y(x)}^\top \bar{\mathbf{A}}^{-\h}\\
\smat{0\\W(x)}^\top \bar{\mathbf{A}}^{-\h}
}
\Upsilon
\smat{\bar{\mathbf{Q}}^\h \frac{\partial \hat{Z}(x)}{\partial x}^\top & 0}
\right) \\
& \preceq 
\smat{
\Tp\Big(
\smat{
H(x)P\\
W(x)Y(x)}^\top \Zbb \frac{\partial \hat{Z}(x)}{\partial x}^\top
\Big) + \Theta(x) & \star  \\
\smat{0\\W(x)}^\top \Zbb \frac{\partial \hat{Z}(x)}{\partial x}^\top & -\Gamma(|w|)
} \\
& +\frac{1}{\lambda(x,w)}
\smat{
\smat{H(x)P\\W(x)Y(x)}^\top \\
\smat{0\\W(x)}^\top
}
\bar{\mathbf{A}}^{-1}
\smat{
\smat{H(x)P\\W(x)Y(x)}^\top \\
\smat{0\\W(x)}^\top
}^\top
+ \lambda(x,w)
\smat{
\frac{\partial \hat{Z}(x)}{\partial x}\\
0}
\bar{\mathbf{Q}}
\smat{
\frac{\partial \hat{Z}(x)}{\partial x}\\
0}^\top \preceq 0
\end{align*}
where the inequality follows by completing the square \cite[Lemma~2]{chen2023data} and $\| \Upsilon \| \le 1$.
By Schur complement \cite[p.~28]{boyd1994linear}, the previous condition holds if
\begin{align*}
& \forall x,w, \quad   
\smat{
\left\{
\begin{smallmatrix}
\Tp\Big(
\smat{
H(x)P\\
W(x)Y(x)}^\top \Zbb \frac{\partial \hat{Z}(x)}{\partial x}^\top
\Big) \\
+ \Theta(x) + \lambda(x,w) \frac{\partial \hat{Z}(x)}{\partial x} \bar{\mathbf{Q}} \frac{\partial \hat{Z}(x)}{\partial x}^\top
\end{smallmatrix}
\right\} & \frac{\partial \hat{Z}(x)}{\partial x}\Zbb^\top \smat{0\\W(x)} & \smat{H(x)P\\W(x)Y(x)}^\top \\
\smat{0\\W(x)}^\top \Zbb \frac{\partial \hat{Z}(x)}{\partial x}^\top & -\Gamma(|w|) & \smat{0\\W(x)}^\top\\
\smat{H(x)P\\W(x)Y(x)} & \smat{0\\W(x)} & - \lambda(x,w) \bar{\mathbf{A}}
}\preceq 0,
\end{align*}
and this is implied by~\eqref{convex_ISS_w:dissipation}.
We have then shown that \eqref{convex_ISS_w:diss_ineq_almost_proof} holds.

Consider the function $a$ defined in~\eqref{convex_ISS_w:a}.
Since $\hat{Z}(0) = 0$ by Assumption~\ref{assumpt:Zhat}, $a(0) = 0$ and, by~\eqref{convex_ISS_w:Theta}, 
\begin{align*}
& \forall x, \quad  a(x) = \hat{Z}(x)^\top P^{-1}(\eta\Xi(x) +S_\Theta(x)) P^{-1}\hat{Z}(x) \ge \eta \hat{Z}(x)^\top P^{-1}\Xi(x) P^{-1}\hat{Z}(x)=\eta b(x)
\end{align*}
where $\eta> 0$ and $b$ in~\eqref{convex_ISS_w:b_func} is positive definite and radially unbounded by hypothesis.
Hence, $a \colon \real^n \to \real$ is continuous, positive definite and radially unbounded.
By these properties and \cite[Lemma 4.3]{khalil2002nonlinear}, there exists a class $\mathcal{K}_\infty$ function $\alpha_3$ such that $\alpha_3(|x|) \le a(x)$ for all $x$.
Moreover, by~\eqref{convex_ISS_w:gamma},
\begin{align*}
\forall w,\quad w^\top \Gamma(|w|) w 
& = \sum_{k=0}^N w^\top C_k w |w|^{2k}  \\ 
& \le \sum_{k=0}^N \lambda_{\max}(C_k) |w|^{2(k+1)}
= \sum_{k=1}^{N+1} \lambda_{\max}(C_{k-1}) |w|^{2k} =: \alpha_4(|w|)
\end{align*}
where $\lambda_{\max}(C_0) \ge 0$, \dots, $\lambda_{\max}(C_N) \ge 0$ and $\sum_{k=1}^{N+1} \lambda_{\max}(C_{k-1}) > 0$ by~\eqref{convex_ISS_w:gamma}.
Hence, $\alpha_4$ belongs to class $\mathcal{K}_\infty$ by Lemma~\ref{lem:classk}.
We then have, from~\eqref{convex_ISS_w:diss_ineq_almost_proof}, that
\begin{align}\label{convex_ISS_w:diss_ineq}
& \forall x,w, \AB \in \bar{\mathcal{I}},  \quad \langle \nabla V(x), F^{\tu{a}}_{A,B}(x,w)\rangle \le -a(x) +  w^\top \Gamma(|w|) w \le -\alpha_3(|x|) + \alpha_4(|w|). \notag
\end{align}
Therefore, we conclude by~\eqref{convex_ISS_w:sandwich_V}, \eqref{convex_ISS_w:diss_ineq} and Definition~\ref{def:ISS_lyap_fun} that for all $\AB \in \bar{\mathcal{I}}$, $V$ is an ISS-Lyapunov function for $\dot{x} = F^{\tu{a}}_{A,B}(x,w)$ and by Fact~\ref{fact:equiv_iss} that for all $\AB \in \bar{\mathcal{I}}$, and in particular for $\AB[\star] \in \bar{\mathcal{I}}$, the system $\dot{x} = F^{\tu{a}}_{A,B}(x,w)$ is input-to-state stable with respect to $w$.
\endgroup
\end{proof}

Theorem~\ref{thm:convex_ISS_w} solves Problem~\ref{problem:actuator_disturbance}, whereby $\mathcal{I} \subseteq \bar{\mathcal{I}}$ by Fact~\ref{fact:overapp_set} and the comparison functions of Problem~\ref{problem:actuator_disturbance} can be obtained as shown in the proof of Theorem~\ref{thm:convex_ISS_w}.
Notably, \eqref{convex_ISS_w} does not include any product of decision variables and is thus convex, unlike the biconvex \eqref{biconvex_ISS_w}.
To apply Theorem~\ref{thm:convex_ISS_w}, one needs to choose the design parameter $\Xi$, solve \eqref{convex_ISS_w} and, for the obtained $P$, verify that $b$ in~\eqref{convex_ISS_w:b_func} is positive definite and radially unbounded.
In~\eqref{convex_ISS_w:Theta}, $\eta > 0$ is a decision variable and this allows us to choose the design parameter $\Xi$ modulo a scaling, which provides some freedom in the choice of $\Xi$.

In terms of guidelines, it is sensible to select $\Xi$ so that $\Xi(x) \succeq 0$ for all $x$, since an ultimate goal is that $b$ in~\eqref{convex_ISS_w:b_func} is positive definite.
As a possible selection, $\Xi$ equal to $\hat{Z} \hat{Z}^\top$ ensures that for any $P = P^\top \succ 0$ returned by~\eqref{convex_ISS_w}, $b(\cdot) =  \hat{Z}(\cdot)^{\top}P^{-1}\hat{Z}(\cdot)\hat{Z}(\cdot)^\top P^{-1}\hat{Z}(\cdot) = |P^{-1/2} \hat{Z}(\cdot)|^4$ is positive definite and radially unbounded by Assumption~\ref{assumpt:Zhat}.

For the cases when the selection of $\Xi$ is not straightforward, one can consider an additional polynomial matrix $\hat{\Gamma}$ such that for all $r$,
\begin{align}
\label{convex_ISS_w:hatGamma}
\hat{\Gamma}(r)=\sum_{k=0}^{\hat{N}} \hat{C}_{k} r^{2k}, \hat{C}_0 = \hat{C}_0^\top \succeq 0, \dots, \hat{C}_{\hat{N}} = \hat{C}_{\hat{N}}^\top \succeq 0, \sum_{k=0}^{\hat{N}} \hat{C}_{k} \succeq \varepsilon I
\end{align}
and take $\Theta(x) = \hat{\Gamma}(|x|)$ to satisfy \eqref{convex_ISS_w:gamma}, \eqref{convex_ISS_w:lambda} and \eqref{convex_ISS_w:dissipation}.
If \eqref{convex_ISS_w:hatGamma}, \eqref{convex_ISS_w:gamma}, \eqref{convex_ISS_w:lambda} and the so-modified \eqref{convex_ISS_w:dissipation} hold for all $r$, $x$ and $w$, then (i)~\eqref{convex_ISS_w:Theta} holds with $\Xi(\cdot) = \hat{\Gamma}(|\cdot|)$, $\eta =1$, $S_\Theta(\cdot) = 0$; (ii)~$b$ in~\eqref{convex_ISS_w:b_func} is positive definite and radially unbounded.
This selection is effectively a special case of Theorem~\ref{thm:convex_ISS_w} that does not require tuning $\Xi$.

After actuator disturbances, we consider process disturbances.
For~\eqref{convex_sys_w}, we present a theorem for data-driven convex design of a controller and an ISS-Lyapunov function that ensures the closed-loop system is input-to-state stable with respect to process disturbances.

\begin{theorem}
\label{thm:convex_ISS_d}
For data points $\{u(t_i), x(t_i), \dot{x}(t_i)\}_{i=0}^{T-1}$ and under Assumptions~\ref{assumpt:sys}, \ref{assumpt:noise} and \ref{assumpt:Zhat}, let the optimization program in \eqref{overapp} be feasible. 
For given symmetric polynomial matrix $\Xi$ and scalar $\varepsilon>0$, suppose there exist a matrix $P=P^\top \succ 0$, polynomial matrices $Y$ and $\Gamma$, a symmetric polynomial matrix $\Theta$, a scalar $\eta>0$, a polynomial $\lambda$, an SOS polynomial $s_\lambda$, SOS polynomial matrices $S_\Theta$ and $S_\partial$ such that for all $r$, $x$ and $d$%
\begin{subequations}
\label{convex_ISS_d}
\begin{align}
&\Gamma(r)=\sum_{k=0}^{N} C_{k} r^{2k}, C_0 = C_0^\top \succeq 0, \dots, C_N = C_N^\top \succeq 0, \sum_{k=0}^{N} C_{k} \succeq \varepsilon I,
\label{convex_ISS_d:gamma}\\
&\lambda(x,d)-\varepsilon=s_\lambda(x,d),\label{convex_ISS_d:lambda}\\
&\Theta(x)-\eta\Xi(x)=S_\Theta(x),\\
&\bmat{
\left\{
			\begin{smallmatrix}
			\Tp\Big(
			\smat{
			H(x)P\\
			W(x)Y(x)}^\top \Zbb \frac{\partial \hat{Z}(x)}{\partial x}^\top
			\Big) \\
			+ \Theta(x) + \lambda(x,d) \frac{\partial \hat{Z}(x)}{\partial x} \bar{\mathbf{Q}} 					\frac{\partial \hat{Z}(x)}{\partial x}^\top 
			\end{smallmatrix}
\right\} & \frac{\partial \hat{Z}(x)}{\partial x} & \smat{H(x)P\\W(x)Y(x)}^\top \\[4mm]
\frac{\partial \hat{Z}(x)}{\partial x}^\top & - \Gamma(|d|) & 0\\
\smat{H(x)P\\W(x)Y(x)} & 0 & - \lambda(x,d) \bar{\mathbf{A}}
			} =-S_\partial(x,d). \label{convex_ISS_d:dissipation}
\end{align}
\end{subequations}
Then, if the function
\begin{align}
\label{convex_ISS_d:b_func}
b(\cdot):=\hat{Z}(\cdot)^{\top}P^{-1}\Xi(\cdot)P^{-1}\hat{Z}(\cdot)
\end{align}
is positive definite and radially unbounded,
the closed-loop system
\begin{align*}
\dot{x} = \AB \bmat{ H(x)\hat{Z}(x)\\ W(x)Y(x)P^{-1}\hat{Z}(x)} +d = f^{\tu{p}}_{A,B}(x,d,Y(x)P^{-1}\hat{Z}(x))
\end{align*}
is ISS with respect to the process disturbance $d$ for all $\AB \in \bar{\mathcal{I}}$, and in particular for $\AB[\star]$.
\end{theorem}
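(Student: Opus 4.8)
The plan is to follow the proof of Theorem~\ref{thm:convex_ISS_w} almost verbatim, with the process disturbance $d$ playing the role of the exogenous input in place of $w$; the only structural change is that $d$ now enters the dynamics additively rather than through the input channel $W(x)$. First I would take $x \mapsto V(x) := \hat{Z}(x)^\top P^{-1}\hat{Z}(x)$ as the candidate ISS-Lyapunov function. By Assumption~\ref{assumpt:Zhat} (in particular $\hat{Z}(x)=0$ if and only if $x=0$, and $|\hat{Z}(\cdot)|$ radially unbounded) together with $P=P^\top \succ 0$, the function $V$ is continuous, positive definite and radially unbounded, so \cite[Lemma~4.3]{khalil2002nonlinear} yields class $\mathcal{K}_\infty$ functions $\alpha_1,\alpha_2$ with $\alpha_1(|x|)\le V(x)\le\alpha_2(|x|)$ for all $x$.

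Next I would rewrite the closed loop, using the identity $\AB\smat{H(x)\hat{Z}(x)\\ W(x)Y(x)P^{-1}\hat{Z}(x)} = \AB\smat{H(x)P\\ W(x)Y(x)}P^{-1}\hat{Z}(x)$, as $\dot x = \AB\smat{H(x)P\\ W(x)Y(x)}P^{-1}\hat{Z}(x) + d =: F^{\tu{p}}_{A,B}(x,d)$, and express $\langle\nabla V(x),F^{\tu{p}}_{A,B}(x,d)\rangle$ as a quadratic form in $\smat{P^{-1}\hat{Z}(x)\\ d}$ whose coefficient matrix has $(1,1)$ block $\Tp\big(\smat{H(x)P\\ W(x)Y(x)}^\top\AB^\top\frac{\partial\hat{Z}(x)}{\partial x}^\top\big)$, $(1,2)$ block $\frac{\partial\hat{Z}(x)}{\partial x}$ and zero $(2,2)$ block; crucially, this off-diagonal block does not depend on $\AB$, precisely because $d$ bypasses $W(x)$. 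Setting $a(x):=\hat{Z}(x)^\top P^{-1}\Theta(x)P^{-1}\hat{Z}(x)$, the dissipation target $\langle\nabla V(x),F^{\tu{p}}_{A,B}(x,d)\rangle\le -a(x)+d^\top\Gamma(|d|)d$ for all $x$, $d$ and $\AB\in\bar{\mathcal{I}}$ then reduces to negative semidefiniteness, for all $x$, $d$ and $\AB\in\bar{\mathcal{I}}$, of the block matrix with $(1,1)$ block $\Tp(\cdots)+\Theta(x)$, $(1,2)$ block $\frac{\partial\hat{Z}(x)}{\partial x}$ and $(2,2)$ block $-\Gamma(|d|)$.

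I would then substitute the equivalent parametrization $\AB^\top=\Zbb+\bar{\mathbf{A}}^{-\h}\Upsilon\bar{\mathbf{Q}}^\h$ with $\|\Upsilon\|\le1$, from~\eqref{set_overapproximation}; this isolates a single $\Upsilon$-dependent term of the form $\Tp(L\,\Upsilon\, R)$ with $R=\smat{\bar{\mathbf{Q}}^\h\frac{\partial\hat{Z}(x)}{\partial x}^\top & 0}$ and $L$ equal to $\smat{H(x)P\\ W(x)Y(x)}^\top\bar{\mathbf{A}}^{-\h}$ in its first block row and zero in its second, the vanishing block reflecting the additive nature of $d$. Using $\lambda(x,d)>0$, which follows from~\eqref{convex_ISS_d:lambda} since $\varepsilon>0$ and $s_\lambda$ is SOS, I would complete the square as in \cite[Lemma~2]{chen2023data} and then apply the Schur complement \cite[p.~28]{boyd1994linear} to arrive at the requirement that the block matrix on the left-hand side of~\eqref{convex_ISS_d:dissipation} be negative semidefinite; this is guaranteed by~\eqref{convex_ISS_d:dissipation} because $S_\partial$ is an SOS matrix. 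It then remains to extract the comparison functions $\alpha_3$, $\alpha_4$: since $\hat{Z}(0)=0$ gives $a(0)=0$ and, by $\Theta-\eta\Xi=S_\Theta$ with $S_\Theta$ SOS, $a(x)\ge\eta\,b(x)$ with $\eta>0$ and $b$ positive definite and radially unbounded by hypothesis, $a$ is continuous, positive definite and radially unbounded, so \cite[Lemma~4.3]{khalil2002nonlinear} provides $\alpha_3\in\mathcal{K}_\infty$ with $\alpha_3(|x|)\le a(x)$; and $d^\top\Gamma(|d|)d=\sum_{k=0}^N d^\top C_k d\,|d|^{2k}\le\sum_{k=1}^{N+1}\lambda_{\max}(C_{k-1})|d|^{2k}=:\alpha_4(|d|)$, which lies in $\mathcal{K}_\infty$ by Lemma~\ref{lem:classk} thanks to~\eqref{convex_ISS_d:gamma}. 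Combining these with the sandwich bounds on $V$ and Definition~\ref{def:ISS_lyap_fun}, $V$ is an ISS-Lyapunov function for $\dot x=F^{\tu{p}}_{A,B}(x,d)$ for every $\AB\in\bar{\mathcal{I}}$, and Fact~\ref{fact:equiv_iss} together with $\AB[\star]\in\mathcal{I}\subseteq\bar{\mathcal{I}}$ (Fact~\ref{fact:overapp_set}) gives the claim.

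The main obstacle is essentially bookkeeping: carrying the block structure intact through the quadratic-form rewriting, the ellipsoid substitution, the square completion and the Schur complement, so that the resulting inequality coincides with~\eqref{convex_ISS_d:dissipation}. Conceptually the argument is lighter than for Theorem~\ref{thm:convex_ISS_w}, since the additive disturbance yields a constant off-diagonal block and hence a $\Upsilon$-term supported only on the $\hat{Z}$-coordinates, so no idea beyond those already present there is required.
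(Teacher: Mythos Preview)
Your proposal is correct and mirrors the paper's own proof essentially step for step: the same candidate $V(x)=\hat{Z}(x)^\top P^{-1}\hat{Z}(x)$, the same quadratic-form rewriting in $(P^{-1}\hat{Z}(x),d)$, the same substitution of the $\Upsilon$-parametrization of $\bar{\mathcal{I}}$, square completion via $\lambda(x,d)>0$, Schur complement to reach~\eqref{convex_ISS_d:dissipation}, and the same extraction of $\alpha_3$ from $a\ge\eta b$ and of $\alpha_4$ from $\Gamma$ via Lemma~\ref{lem:classk}. You also correctly identify the one structural simplification relative to Theorem~\ref{thm:convex_ISS_w}, namely that the off-diagonal block $\frac{\partial\hat{Z}(x)}{\partial x}$ is independent of $\AB$ and the $\Upsilon$-term is supported only on the $\hat{Z}$-coordinates.
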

\begin{proof}
Let us consider $x \mapsto V(x):=\hat{Z}(x)^\top P^{-1}\hat{Z}(x)$ as an ISS-Lyapunov function.
By the same arguments as in the proof of Theorem~\ref{thm:convex_ISS_w}, there exist class $\mathcal{K}_\infty$ functions $\alpha_1$ and $\alpha_2$ such that for all $x$
\begin{equation}\label{convex_ISS_d:sandwich_V}
   \alpha_1(|x|) \le V(x) \le \alpha_2(|x|).
\end{equation}
We have
\begin{align*}
\dot{x}= f^{\tu{p}}_{A,B}(x,d,Y(x)P^{-1}\hat{Z}(x)) =\AB \smat{H(x)P\\ W(x)Y(x)} P^{-1}\hat{Z}(x)+ d =: F^{\tu{p}}_{A,B}(x,d).
\end{align*}
The derivative of $V$ along solutions of the closed-loop system is
\begin{align*}
& \langle \nabla V(x), F^{\tu{p}}_{A,B}(x,d)\rangle = 2\hat{Z}(x)^\top P^{-1}\frac{\partial \hat{Z}(x)}{\partial x}
\Big(
\AB
\smat{
H(x)P\\
W(x)Y(x)
}
P^{-1}\hat{Z}(x)
+ d
\Big)\\
& = 
\hat{Z}(x)^\top P^{-1}
\Tp\Big(
\frac{\partial \hat{Z}(x)}{\partial x}\AB 
\smat{
H(x)P\\
W(x)Y(x)}
\Big)
P^{-1}\hat{Z}(x)
+
2 \hat{Z}(x)^\top P^{-1} \frac{\partial \hat{Z}(x)}{\partial x} d\\
& = 
\smat{P^{-1}\hat{Z}(x)\\ d}^\top
\smat{
\Tp\Big(
\smat{
H(x)P\\
W(x)Y(x)}^\top \AB^\top \frac{\partial \hat{Z}(x)}{\partial x}^\top
\Big) & \frac{\partial \hat{Z}(x)}{\partial x} \\
\frac{\partial \hat{Z}(x)}{\partial x}^\top & 0
}
\smat{P^{-1}\hat{Z}(x)\\ d}.
\end{align*}
Define 
\begin{align}
\label{convex_ISS_d:a}
a(x):=\hat{Z}(x)^\top P^{-1}\Theta(x)P^{-1}\hat{Z}(x).
\end{align}
We would like to show that
\begin{align}
& \forall x,d, \AB \in \bar{\mathcal{I}}, \label{convex_ISS_d:diss_ineq_almost}\\
& 0 \ge \langle \nabla V(x), F^{\tu{p}}_{A,B}(x,d)\rangle + a(x) - d^\top \Gamma(|d|) d \notag \\
& = 
\smat{P^{-1}\hat{Z}(x)\\ d}^\top
\smat{
\Tp\Big(
\smat{
H(x)P\\
W(x)Y(x)}^\top \AB^\top \frac{\partial \hat{Z}(x)}{\partial x}^\top
\Big) + \Theta(x) & \frac{\partial \hat{Z}(x)}{\partial x} \\
\frac{\partial \hat{Z}(x)}{\partial x}^\top & - \Gamma(|d|)
}
\smat{P^{-1}\hat{Z}(x)\\ d}. \notag
\end{align}
This holds if
\begin{align*}
& \forall x,d, \AB \in \bar{\mathcal{I}}, \quad  0 \succeq 
\smat{
\Tp\Big(
\smat{
H(x)P\\
W(x)Y(x)}^\top \AB^\top \frac{\partial \hat{Z}(x)}{\partial x}^\top
\Big) + \Theta(x) & \frac{\partial \hat{Z}(x)}{\partial x} \\
\frac{\partial \hat{Z}(x)}{\partial x}^\top & - \Gamma(|d|)
}.
\end{align*}
By the equivalent parametrization of the set $\bar{\mathcal{I}}$ in~\eqref{set_overapproximation}, this holds if
\begin{align*}
& \forall x,d, \Upsilon  \text{ with } \| \Upsilon \| \le 1, \\
& 0 \succeq \!
\smat{
\Tp\Big(
\smat{
H(x)P\\
W(x)Y(x)}^\top \Zbb \frac{\partial \hat{Z}(x)}{\partial x}^\top
\Big) + \Theta(x) & \frac{\partial \hat{Z}(x)}{\partial x} \\
\frac{\partial \hat{Z}(x)}{\partial x}^\top & - \Gamma(|d|)
}\!+\!
\smat{
\Tp\Big(
\smat{
H(x)P\\
W(x)Y(x)}^\top \bar{\mathbf{A}}^{-\h} \Upsilon \bar{\mathbf{Q}}^\h \frac{\partial \hat{Z}(x)}{\partial x}^\top
\Big)  & \star  \\
0 & 0
}\\
& \! = \! \smat{
\!\Tp\Big(
\smat{
H(x)P\\
W(x)Y(x)}^\top \Zbb \frac{\partial \hat{Z}(x)}{\partial x}^\top\!
\Big) + \Theta(x) & \frac{\partial \hat{Z}(x)}{\partial x} \\
\frac{\partial \hat{Z}(x)}{\partial x}^\top & - \Gamma(|d|)
}\!\!+\!
\Tp\!
\left(
\smat{
\!\smat{H(x)P\\W(x)Y(x)}^\top \bar{\mathbf{A}}^{-\h}\\
0
}
\!\Upsilon\!
\smat{\bar{\mathbf{Q}}^\h \frac{\partial \hat{Z}(x)}{\partial x}^\top & 0}\!
\right)\!.
\end{align*}
Since $\varepsilon>0$ and $s_\lambda$ is an SOS polynomial, \eqref{convex_ISS_d:lambda} implies that $\lambda(x,d)>0$ for all $x$ and $d$. 
Then, the previous condition holds if
\begin{align*}
& \forall x,d, \Upsilon  \text{ with } \| \Upsilon \| \le 1, \\
& \!\smat{
\Tp\Big(
\smat{
H(x)P\\
W(x)Y(x)}^\top \Zbb \frac{\partial \hat{Z}(x)}{\partial x}^\top
\Big) + \Theta(x) & \frac{\partial \hat{Z}(x)}{\partial x} \\
\frac{\partial \hat{Z}(x)}{\partial x}^\top & -\Gamma(|d|)
}\!+\!
\Tp
\left(
\smat{
\smat{H(x)P\\W(x)Y(x)}^\top \bar{\mathbf{A}}^{-\h}\\
0
}\!
\Upsilon
\smat{\bar{\mathbf{Q}}^\h \frac{\partial \hat{Z}(x)}{\partial x}^\top & 0}
\right) \\
& \!\preceq 
\smat{
\Tp\Big(
\smat{
H(x)P\\
W(x)Y(x)}^\top \Zbb \frac{\partial \hat{Z}(x)}{\partial x}^\top
\Big) + \Theta(x) & \frac{\partial \hat{Z}(x)}{\partial x}\\
\frac{\partial \hat{Z}(x)}{\partial x}^\top & -\Gamma(|d|)
} \\
& \!+\frac{1}{\lambda(x,d)}
\smat{
\smat{H(x)P\\W(x)Y(x)}^\top \\
0
}
\bar{\mathbf{A}}^{-1}
\smat{
\smat{H(x)P\\W(x)Y(x)}^\top \\
0
}^\top
+ \lambda(x,d)
\smat{
\frac{\partial \hat{Z}(x)}{\partial x}\\
0}
\bar{\mathbf{Q}}
\smat{
\frac{\partial \hat{Z}(x)}{\partial x}\\
0}^\top \preceq 0
\end{align*}
where the first inequality follows by completing the square \cite[Lemma~2]{chen2023data} and $\| \Upsilon \| \le 1$.
By Schur complement \cite[p.~28]{boyd1994linear}, the previous condition holds if
\begin{align*}
& \forall x,d,\quad  
\smat{
\left\{
\begin{smallmatrix}
\Tp\Big(
\smat{
H(x)P\\
W(x)Y(x)}^\top \Zbb \frac{\partial \hat{Z}(x)}{\partial x}^\top
\Big) \\
+ \Theta(x) + \lambda(x,d) \frac{\partial \hat{Z}(x)}{\partial x} \bar{\mathbf{Q}} \frac{\partial \hat{Z}(x)}{\partial x}^\top 
\end{smallmatrix}
\right\} & \frac{\partial \hat{Z}(x)}{\partial x} & \smat{H(x)P\\W(x)Y(x)}^\top \\[.5mm]
\frac{\partial \hat{Z}(x)}{\partial x}^\top & - \Gamma(|d|) & 0\\
\smat{H(x)P\\W(x)Y(x)} & 0 & - \lambda(x,d) \bar{\mathbf{A}}
} \preceq 0,
\end{align*}
and this is implied by~\eqref{convex_ISS_d:dissipation}.
We have then shown that \eqref{convex_ISS_d:diss_ineq_almost} holds.

Consider the function $a$ defined in~\eqref{convex_ISS_w:a}.
By the same arguments as in the proof of Theorem~\ref{thm:convex_ISS_w}, there exist a class $\mathcal{K}_\infty$ function $\alpha_3$ such that $\alpha_3(|x|) \le a(x)$ for all $x$ and a class $\mathcal{K}_\infty$ function $\alpha_4$ such that $d^\top \Gamma(|d|) d \le \alpha_4(|d|)$ for all $d$.
We then have, from~\eqref{convex_ISS_d:diss_ineq_almost}, that
\begin{align}
\label{convex_ISS_d:diss_ineq}
& \forall x,d, \AB \in \bar{\mathcal{I}}, \\
& \langle \nabla V(x), F^{\tu{p}}_{A,B}(x,d) \rangle \le -a(x) + d^\top \Gamma(|d|) d \le -\alpha_3(|x|) + \alpha_4(|d|).  \notag
\end{align}
Therefore, we conclude by~\eqref{convex_ISS_d:sandwich_V}, \eqref{convex_ISS_d:diss_ineq} and Definition~\ref{def:ISS_lyap_fun} that for all $\AB \in \bar{\mathcal{I}}$, $V$ is an ISS-Lyapunov function for $\dot{x} = F^{\tu{p}}_{A,B}(x,d)$ and by Fact~\ref{fact:equiv_iss} that for all $\AB \in \bar{\mathcal{I}}$, and in particular for $\AB[\star] \in \bar{\mathcal{I}}$, the system $\dot{x} = F^{\tu{p}}_{A,B}(x,d)$ is input-to-state stable with respect to $d$.
\end{proof}

Theorem~\ref{thm:convex_ISS_d} solves Problem~\ref{problem:process_noise}, whereby $\mathcal{I} \subseteq \bar{\mathcal{I}}$ by Fact~\ref{fact:overapp_set} and the comparison functions of Problem~\ref{problem:actuator_disturbance} can be obtained as shown in the proof of Theorem~\ref{thm:convex_ISS_w}.
Notably, \eqref{convex_ISS_d} does not include any product of decision variables and is thus convex, unlike the biconvex \eqref{biconvex_ISS_d}.
To apply Theorem~\ref{thm:convex_ISS_d}, analogous considerations hold to those reported after Theorem~\ref{thm:convex_ISS_w}, especially for the design parameter $\Xi$.

\subsection{Model-based specialization}
\label{sec:modelbased}

In the previous subsections, we have provided data-based conditions for input-to-state stabilization in Theorems~\ref{thm:biconvex_ISS_w}, \ref{thm:biconvex_ISS_d}, \ref{thm:convex_ISS_w}, \ref{thm:convex_ISS_d}.
In this section we would like to observe that these results can be immediately specialized to a model-based setting, if one knew the matrices $\AB[\star]$ of the actual system.
For brevity, we show this only for Theorem~\ref{thm:convex_ISS_w}, of which we obtain the next model-based corollary.

\begin{corollary}
\label{cor:MB_convex_ISS_w}
Let Assumptions~\ref{assumpt:sys} and \ref{assumpt:Zhat} hold. 
For given symmetric polynomial matrix $\Xi$ and scalar $\varepsilon >0$, suppose there exist a matrix $P=P^\top \succ 0$, polynomial matrices $Y$ and $\Gamma$, symmetric polynomial matrix $\Theta$, a scalar $\eta>0$, SOS polynomial matrices $S_\Theta$ and $S_\partial$ such that for all $r$, $x$ and $w$,
\begin{subequations}%
\label{MB_convex_ISS_w}
\begin{align}
&\Gamma(r)=\sum_{k=0}^{N} C_{k} r^{2k}, C_0 = C_0^\top \succeq 0, \dots, C_N = C_N^\top \succeq 0, \sum_{k=0}^{N} C_{k} \succeq \varepsilon I,
\label{MB_convex_ISS_w:gamma}\\
&\Theta(x)-\eta\Xi(x)=S_\Theta(x),\label{MB_convex_ISS_w:Theta}\\
&
\bmat{
\Tp\Big(\frac{\partial \hat{Z}(x)}{\partial x} \AB[\star] \smat{H(x)P\\W(x)Y(x)} \Big) +\Theta(x) &\star \\[3mm]
\smat{0\\W(x)}^\top \AB[\star]^\top \frac{\partial \hat{Z}(x)}{\partial x}^\top &-\Gamma(|w|)
} = -S_\partial(x,w).\label{MB_convex_ISS_w:dissipation}
\end{align}
\end{subequations}
Then, if the function
$b(\cdot):=\hat{Z}(\cdot)^{\top}P^{-1}\Xi(\cdot)P^{-1}\hat{Z}(\cdot)$
is positive definite and radially unbounded, 
the closed-loop system
\begin{align*}
\dot{x}= \AB[\star] \bmat{H(x)\hat{Z}(x)\\ W(x)\big( Y(x)P^{-1}\hat{Z}(x) +w \big)} 
\end{align*}
is ISS with respect to the actuator disturbance $w$.
\end{corollary}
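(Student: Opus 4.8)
The plan is to obtain Corollary~\ref{cor:MB_convex_ISS_w} as the direct specialization of Theorem~\ref{thm:convex_ISS_w} to the degenerate ``data set'' that already pins down the true system, so that the overapproximating ellipsoid $\bar{\mathcal{I}}$ collapses to the single pair $\AB[\star]$. Concretely, I would first observe that under Assumption~\ref{assumpt:sys} one may pretend to have collected data generated by $\AB[\star]$ with zero noise ($\delta \to 0$ or simply $d(t_i)=0$) and with $\smat{Z_0\\W_0}$ of full row rank; then Fact~\ref{fact_full_rank} makes \eqref{overapp} feasible, and solving it yields $\Zbb = \AB[\star]^\top$, $\bar{\mathbf{A}}$ equal to (a scaling of) an identity-like positive definite matrix, and $\bar{\mathbf{Q}}=I$, with $\bar{\mathcal{I}} = \{\AB[\star]\}$ via \eqref{set_overapproximation}. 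An alternative, cleaner route that avoids invoking the data machinery at all is to redo the short argument of the proof of Theorem~\ref{thm:convex_ISS_w} directly with $\AB$ replaced throughout by the known $\AB[\star]$; since there is now no uncertainty $\Upsilon$, the entire ``completing the square / Schur complement on the $\bar{\mathbf{A}}^{-1/2}\Upsilon\bar{\mathbf{Q}}^{1/2}$ block'' step is vacuous and disappears, and so do $\lambda$, $s_\lambda$ and the third block row/column of \eqref{convex_ISS_w:dissipation}.

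Following the second route, the key steps are: (i) set $x\mapsto V(x):=\hat{Z}(x)^\top P^{-1}\hat{Z}(x)$ and invoke Assumption~\ref{assumpt:Zhat} together with \cite[Lemma~4.3]{khalil2002nonlinear} to produce class-$\mathcal{K}_\infty$ functions $\alpha_1,\alpha_2$ with $\alpha_1(|x|)\le V(x)\le\alpha_2(|x|)$, exactly as in \eqref{convex_ISS_w:sandwich_V}; (ii) compute $\langle\nabla V(x),\dot x\rangle$ along the closed loop $\dot x=\AB[\star]\bigl(\smat{H(x)P\\W(x)Y(x)}P^{-1}\hat{Z}(x)+\smat{0\\W(x)}w\bigr)$, which, using $\nabla V(x)=2\tfrac{\partial\hat{Z}(x)}{\partial x}^\top P^{-1}\hat{Z}(x)$ and $BW(x)=\AB[\star]\smat{0\\W(x)}$, equals the quadratic form $\smat{P^{-1}\hat{Z}(x)\\w}^\top M(x)\smat{P^{-1}\hat{Z}(x)\\w}$ with the $(1,1)$ block $\Tp\bigl(\tfrac{\partial\hat{Z}(x)}{\partial x}\AB[\star]\smat{H(x)P\\W(x)Y(x)}\bigr)$, off-diagonal block $\tfrac{\partial\hat{Z}(x)}{\partial x}\AB[\star]\smat{0\\W(x)}$, and $(2,2)$ block $0$; (iii) define $a(x):=\hat{Z}(x)^\top P^{-1}\Theta(x)P^{-1}\hat{Z}(x)$ and note that \eqref{MB_convex_ISS_w:dissipation} says precisely that $M(x)+\smat{\Theta(x)&0\\0&-\Gamma(|w|)}=-S_\partial(x,w)\preceq 0$, hence $\langle\nabla V(x),\dot x\rangle\le -a(x)+w^\top\Gamma(|w|)w$ for all $x,w$; (iv) use \eqref{MB_convex_ISS_w:Theta}, $\eta>0$, and the hypothesis that $b$ in the corollary is positive definite and radially unbounded to conclude via \cite[Lemma~4.3]{khalil2002nonlinear} that $a$ is lower bounded by some class-$\mathcal{K}_\infty$ $\alpha_3(|x|)$; (v) bound $w^\top\Gamma(|w|)w=\sum_{k=0}^N w^\top C_k w\,|w|^{2k}\le\sum_{k=0}^N\lambda_{\max}(C_k)|w|^{2(k+1)}=:\alpha_4(|w|)$, which is class $\mathcal{K}_\infty$ by Lemma~\ref{lem:classk} since \eqref{MB_convex_ISS_w:gamma} forces $\lambda_{\max}(C_k)\ge 0$ with positive sum; (vi) combine to get $\langle\nabla V(x),\dot x\rangle\le-\alpha_3(|x|)+\alpha_4(|w|)$, so $V$ is an ISS-Lyapunov function by Definition~\ref{def:ISS_lyap_fun}, and conclude ISS with respect to $w$ by Fact~\ref{fact:equiv_iss}.

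Since essentially every individual step is lifted verbatim from the proof of Theorem~\ref{thm:convex_ISS_w}, I do not expect a genuine obstacle; the only point requiring care is the bookkeeping that shows \eqref{MB_convex_ISS_w:dissipation} really is the $\Upsilon=0$, $\lambda$-free reduction of \eqref{convex_ISS_w:dissipation} --- i.e., that dropping the third block row/column of \eqref{convex_ISS_w:dissipation} and the term $\lambda(x,w)\tfrac{\partial\hat{Z}(x)}{\partial x}\bar{\mathbf{Q}}\tfrac{\partial\hat{Z}(x)}{\partial x}^\top$ is legitimate precisely because there is no uncertainty to absorb. Accordingly, the cleanest writeup is simply to say: ``The proof is obtained by specializing the proof of Theorem~\ref{thm:convex_ISS_w} to the case $\bar{\mathcal{I}}=\{\AB[\star]\}$, for which $\Zbb=\AB[\star]^\top$, the completing-the-square step is not needed, and $\lambda$, $s_\lambda$ and the last block row/column of \eqref{convex_ISS_w:dissipation} drop out, yielding \eqref{MB_convex_ISS_w:dissipation}; the remaining arguments, in particular the construction of $\alpha_1,\dots,\alpha_4$, are unchanged.'' If a self-contained proof is preferred instead, steps (i)--(vi) above fill it in with only routine computation.
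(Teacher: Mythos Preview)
Your proposal is correct and matches the paper's approach: the paper does not give a separate proof of Corollary~\ref{cor:MB_convex_ISS_w} but presents it as the model-based specialization of Theorem~\ref{thm:convex_ISS_w}, which is exactly what your ``second route'' does by rerunning the argument with $\AB[\star]$ in place of a generic $\AB\in\bar{\mathcal{I}}$ so that the $\Upsilon$-block, $\lambda$, $s_\lambda$ and the third block row/column of \eqref{convex_ISS_w:dissipation} drop out. Your steps (i)--(vi) reproduce verbatim the remaining parts of the proof of Theorem~\ref{thm:convex_ISS_w}.
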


Corollary~\ref{cor:MB_convex_ISS_w} corresponds to a convex program for the numerical construction of a controller and an ISS-Lyapunov function that guarantee input-to-state stabilization of the closed-loop system for a nonlinear input-affine system with polynomial dynamics.

\section{Numerical illustration}
\label{sec:exper}

To show how the proposed approaches allow for data-driven design of controllers achieving input-to-state stabilization with respect to exogenous inputs, we consider the polynomial system
\begin{equation}\label{simu:sys}
\begin{bmatrix}
\dot{x}_1 \\
\dot{x}_2
\end{bmatrix}=\begin{bmatrix}
-x_1^3+x_1x_2^2 \\
x_1x_2^2-x_1^2x_2
\end{bmatrix}+\begin{bmatrix}
0 \\
1
\end{bmatrix} u=f_{\star}(x)+g_{\star}(x) u
\end{equation}
from \cite{anta2008self}. 
Along the lines of Remark~\ref{rmk:regressor}, we select the function libraries $x \mapsto Z(x)=(x_1^3, x_1^2x_2,x_1x_2^2,x_2^3)$ and $x \mapsto W(x)=1$, which satisfy Assumption~\ref{assumpt:sys}. 
Note that $Z$ contains more monomials than those appearing in $f_{\star}$. 
With these $Z$ and $W$, \eqref{simu:sys} yields
$
A_{\star}=
\smat{
-1 & 0 & 1 & 0\\
0 & -1 & 1 & 0
}$ and $B_{\star}=\smat{
0\\1
}$.
The matrices $A_\star$ and $B_\star$ are unknown in our setting, see Section~\ref{sec:problem}, and are used only to generate the noisy data points on which the programs \eqref{biconvex_ISS_w}, \eqref{biconvex_ISS_d}, \eqref{convex_ISS_w} and \eqref{convex_ISS_d} are based. 
In the numerical experiment, the initial state is $x(0)=(2,-2)$ and the input $u$  is generated by linearly interpolating a sequence that is a realization of a Gaussian random variable with mean zero and variance one.
Analogously, the disturbance $d$ acting on data, see \eqref{data_collect}, is generated by linearly interpolating a sequence that is a realization of a random variable uniformly distributed in $\{ d \in \real^2 \colon |d| \le 1 \}$, so to satisfy Assumption~\ref{assumpt:noise}.
The evolutions of $x$, $u$ and the unknown $d$ are depicted in Figure~\ref{fig:data}. 

\begin{figure}[htbp]
\centerline{\includegraphics[width=0.6\linewidth]{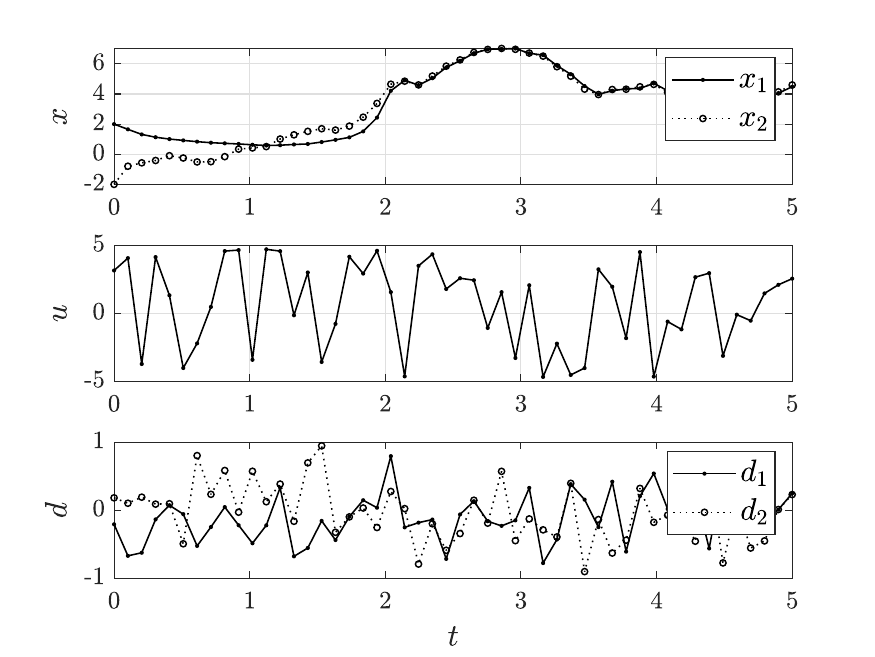}}
\caption{Data-collection experiment for the considered polynomial system.}\label{fig:data}
\end{figure}

From these signals, we collect the data points $\{ u(t_i), x(t_i), \dot{x}(t_i)\}_{i=0}^{T-1}$ for $T=50$ and, with them, we construct the matrices $\boldsymbol{C}_i, \boldsymbol{B}_i$, $\boldsymbol{A}_i$ in \eqref{sol_overapp}, $i=0, \dots, T-1$, solve \eqref{overapp} by YALMIP \cite{lofberg2004yalmip} and obtain matrices $\bar{\mathbf{A}}$, $\bar{\mathbf{B}}$ and, thus, $\Zbb$ and $\bar{\mathbf{Q}}$ in \eqref{solution_overapp}.
The so-obtained matrices $\bar{\mathbf{A}}$, $\bar{\mathbf{B}}$, $\Zbb$ and $\bar{\mathbf{Q}}$ from this single numerical experiment are used in all subsequent programs.

\subsection{Establishing ISS with biconvex programs}
\label{sec:pro_nonconvex}

Using $\bar{\mathbf{A}}$, $\Zbb$ and $\bar{\mathbf{Q}}$, we apply Theorems~\ref{thm:biconvex_ISS_w} or \ref{thm:biconvex_ISS_d} to design a state-feedback controller that achieves input-to-state stabilization with respect to actuator or process disturbances. 
Specifically, we employ YALMIP \cite{lofberg2009pre} to solve \eqref{biconvex_ISS_w} for actuator disturbances or \eqref{biconvex_ISS_d} for process disturbances, setting the allowed maximum degrees for $\lambda$, $k$, $V$ at $4$, $3$, $4$ and the allowed minimum degrees at $0$, $1$, $2$, respectively. 
As elucidated in Section~\ref{subsec:process_noise}, we solve \eqref{biconvex_ISS_w} or \eqref{biconvex_ISS_d} alternately, first with $V$, $\lambda$ (but not $k$) among the decision variables and then with $k$ (but not $V$, $\lambda$) among the decision variables.
For \eqref{biconvex_ISS_w} or \eqref{biconvex_ISS_d}, these two steps are repeated 3 times and require an initial guess for the controller. 
As an initial guess, we take $x \mapsto k_0(x)= -x_2^3-x_1x_2^2$ from \cite{anta2008self}, which ensures global asymptotic stability (of the origin) when $w=0$ or $d=0$; however, it is well known that global asymptotic stability with $w=0$ or $d=0$ does not imply input-to-state stability with respect to $w$ or $d$ \cite{sontag2008input}.

\begin{table}
\resizebox{\linewidth}{!}{
\begin{tabular}{ll}
\toprule
ISS w.r.t. $w$ via \eqref{biconvex_ISS_w} & ISS w.r.t. $d$ via \eqref{biconvex_ISS_d}\\
\midrule
$\begin{matrix}
k^{\tu{b,a}}(x) = 0.2660x_1-1.9477x_2+0.2335x_1^3 \\-0.6436x_1^2x_2 +0.2966x_1x_2^2 -1.0875x_2^3
\end{matrix}$ & 
$\begin{matrix}
k^{\tu{b,p}}(x) = 0.3792x_1-1.0342x_2+0.1849x_1^3 \\ -0.4296x_1^2x_2 -0.2919x_1x_2^2-1.2141x_2^3
\end{matrix}$\\
$\begin{matrix}
V^{\tu{b,a}}(x) =
0.9381x_2^2+0.0267x_1^2-0.2511x_1x_2\\ +2.6152x_1^4 -2.6978x_1^3x_2+3.0112x_1^2x_2^2\\-0.2369x_1x_2^3+4.1744x_2^4
\end{matrix}$ & 
$\begin{matrix}
V^{\tu{b,p}}(x) =
0.8325x_2^2+0.1142x_1^2-0.6086x_1x_2\\+1.5527x_1^4 -2.7115x_1^3x_2+2.2416x_1^2x_2^2\\-0.4772x_1x_2^3+2.9887x_2^4
\end{matrix}$\\
$\begin{matrix}
\alpha_1^{\tu{b,a}}(r) = 0.0049r^2+0.8761r^4
\end{matrix}$ & 
$\begin{matrix}
\alpha_1^{\tu{b,p}}(r) = 0.0013r^2+0.3183r^4
\end{matrix}$\\
$\begin{matrix}
\alpha_2^{\tu{b,a}}(r) = 1.9143r^2+ 5.1702r^4
\end{matrix}$ & 
$\begin{matrix}
\alpha_2^{\tu{b,p}}(r) = 1.3096r^2+3.3943r^4
\end{matrix}$\\
$\begin{matrix}
\alpha_3^{\tu{b,a}}(r) = 0.0077r^4
\end{matrix}$ & 
$\begin{matrix}
\alpha_3^{\tu{b,p}}(r) = 0.0030r^4
\end{matrix}$\\
$\begin{matrix}
\alpha_4^{\tu{b,a}}(r) = 1.9330r^2+8.2237r^4
\end{matrix}$ & 
$\begin{matrix}
\alpha_4^{\tu{b,p}}(r) = 4.1492r^2+1.1602r^4
\end{matrix}$\\
$\begin{matrix}
\lambda^{\tu{b,a}}(x,w) = 9.7939+0.8059x_2^2+0.7872x_1^2 \\-0.0111 x_1 x_2 +0.0001 x_2 w+0.0059 w^2
\end{matrix}$ & 
$\begin{matrix}
\lambda^{\tu{b,p}}(x,d) = 6.6912+0.4736x_2^2+0.4104x_1^2 \hspace*{15mm}\\ -0.08655x_1x_2 +0.0003x_1 d +0.0002 x_2 d +0.0032 d^2
\end{matrix}$\\
\bottomrule
\end{tabular}
}
\caption{Solutions of programs \eqref{biconvex_ISS_w} and \eqref{biconvex_ISS_d}.}
\label{tab:sol}
\label{}
\end{table}

We report the obtained solutions in Table \ref{tab:sol}, where the superscripts $\tu{b}$, $\tu{a}$, $\tu{p}$ stand for ``biconvex'', ``actuator'', ``process''; moreover, for each quantity, we report only the most significant terms and neglect those with much smaller coefficients.
To illustrate these solutions, we use the numerically obtained controllers, ISS-Lyapunov functions and comparison functions to show through Figure~\ref{fig_iss_V} that, for all $t \ge 0$,
\begin{align*}
\dot{V}^{\tu{b,a}}(t):=\langle\nabla V^{\tu{b,a}}(x(t)),f_{A_\star,B_\star}^{\tu{a}}(x(t),w(t),k^{\tu{b,a}}(x(t)))\rangle 
& \le -\alpha_3^{\tu{b,a}}(|x(t)|)+\alpha_4^{\tu{b,a}}(|w(t)|) \\
\dot{V}^{\tu{b,p}}(t):= \langle\nabla V^{\tu{b,p}}(x(t)),f_{A_\star,B_\star}^{\tu{p}}(x(t),d(t),k^{\tu{b,p}}(x(t)))
\rangle 
& \le -\alpha_3^{\tu{b,p}}(|x(t)|)+\alpha_4^{\tu{b,p}}(|d(t)|),
\end{align*}
which is the key requirement of an ISS-Lyapunov function, cf.~\eqref{ISS_lyap_fun_prop:dissip_ineq}.

\begin{figure}[htbp]
\centerline{
\includegraphics[width=0.55\linewidth]{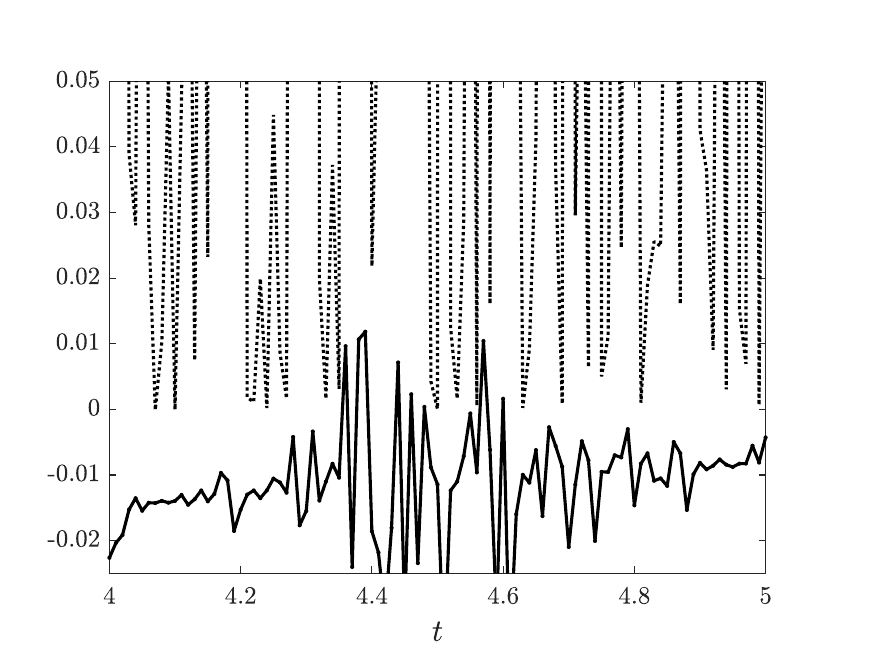}\hspace*{-6mm}
\includegraphics[width=0.55\linewidth]{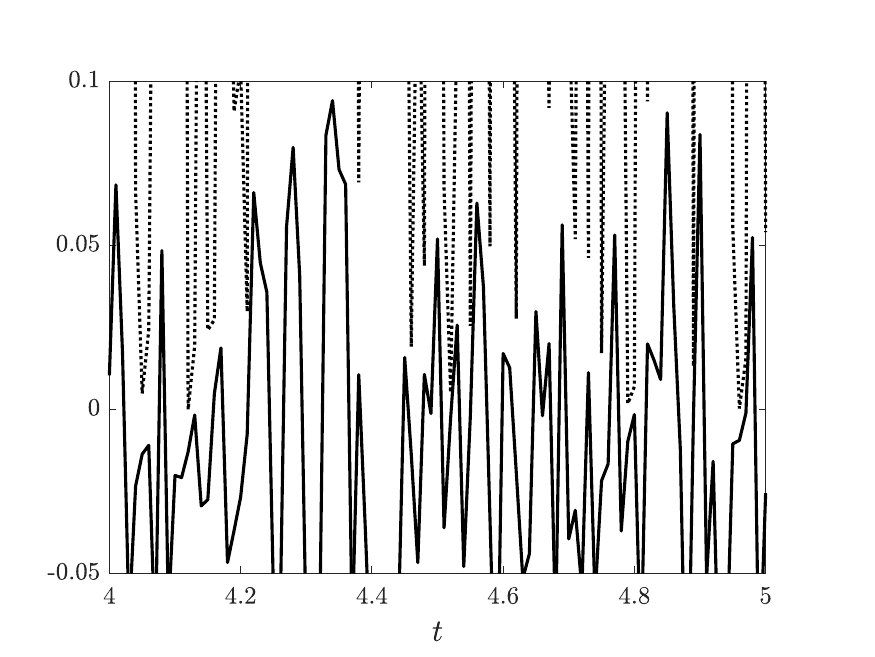}}
\caption{Left: evolution of $\dot{V}^{\tu{b,a}}(\cdot)$ (solid) and $-\alpha_3^{\tu{b,a}}(|x(\cdot)|)+\alpha_4^{\tu{b,a}}(|w(\cdot)|)$ (dotted). Right: evolution of $\dot{V}^{\tu{b,p}}(\cdot)$ (solid) and $-\alpha_3^{\tu{b,p}}(|x(\cdot)|)+\alpha_4^{\tu{b,p}}(|d(\cdot)|)$ (dotted).}
\label{fig_iss_V}
\end{figure}

\subsection{Establishing ISS with convex programs}
\label{sec:pro_convex}

First, we apply Theorem~\ref{thm:convex_ISS_w} to achieve input-to-state stabilization with respect to actuator disturbances. 
We select the function library $x \mapsto \hat{Z}^{\tu{a}}(x) = (x_1,x_2)$, which returns
\begin{align*}
Z(x) = H^{\tu{a}}(x)\hat{Z}^{\tu{a}}(x) =
\smat{x_1^2 & 0\\
x_1x_2 & 0\\
0 & x_1x_2\\
0 & x_2^2
}
\bmat{x_1\\x_2}.
\end{align*} 
For $\hat{Z}^\tu{a}$, Assumption~\ref{assumpt:Zhat} holds. 
With $\bar{\mathbf{A}}$, $\Zbb$ and $\bar{\mathbf{Q}}$, we solve \eqref{convex_ISS_w} with YALMIP taking the allowed maximum degrees of $\lambda$, $Y$ as $4$, $2$, respectively.
To reduce computational complexity, we consider $x \mapsto \lambda(x)$ instead of $(x,w) \mapsto \lambda(x,w)$.
We set $\Xi^{\tu{a}}(x)=\smat{x_1^2&x_1x_2\\x_1x_2&x_2^2}$.
The solutions of \eqref{convex_ISS_w} are
    \begin{align*}
        P^{\tu{a}}&=\smat{
        0.8557 &-0.0226\\-0.0226&0.6136},\\
        Y^{\tu{a}}(x)&=\smat{
        0.0396+0.0462x_1^2+0.1053x_1x_2+0.0313x_2^2 & ~-3.1085-3.2126x_1^2-0.3021x_1x_2-3.21744x_2^2},\\
        \Gamma^{\tu{a}}(r)&= 2.7159+2.4804r^2,\\        
        \Theta^{\tu{a}}(x) & = \smat{
        0.0022+0.5519x_1^2+0.0223x_1x_2+0.0096x_2^2 & -0.0142-0.04919x_1^2+0.2074x_1x_2-0.0186x_2^2\\
        \star & 1.0509+1.4427x_1^2+0.1476x_1x_2+1.4316x_2^2}\\
        \lambda^{\tu{a}}(x)&= 0.0244+0.04898x_1^2-0.0041x_1x_2+0.0458x_2^2+0.0254x_1^4-0.0040x_1^3x_2\\
        & \quad +0.0467x_1^2x_2^2-0.0037x_1x_2^3+0.0224x_2^4\\
        \eta^{\tu{a}} &=0.2104.        
    \end{align*}
From~\eqref{convex_ISS_w:a} and \eqref{convex_ISS_w:b_func}, we obtain
\begin{align*}
a^{\tu{a}}(x)&=0.0035x_1^2+0.0935x_1x_2+2.7942x_2^2+0.7529x_1^4\\
&\quad +0.1220x_1^3x_2+4.6620x_1^2x_2^2+0.5529x_1x_2^3+3.8066x_2^4 \\
b^{\tu{a}}(x)&=1.3685x_1^4+0.2015x_1^3x_2+3.8238x_1^2x_2^2+0.2809x_1x_2^3+2.6608x_2^4.
\end{align*}

Second, we apply Theorem~\ref{thm:convex_ISS_d} to achieve input-to-state stabilization with respect to process disturbances. 
We select the function library $x \mapsto \hat{Z}^{\tu{p}}(x) = (x_1^2,x_2^2)$, which returns
\begin{align*}
Z(x) = H^{\tu{p}}(x) \hat{Z}^{\tu{p}}(x) =
\smat{
x_1& 0\\
x_2& 0\\
0& x_1\\
0& x_2}
\bmat{x_1^2\\
x_2^2}.
\end{align*} 
For $\hat{Z}^{\tu{p}}$, Assumption~\ref{assumpt:Zhat} holds. With $\bar{\mathbf{A}}$, $\Zbb$ and $\bar{\mathbf{Q}}$, we solve \eqref{convex_ISS_d} with YALMIP taking the allowed maximum degrees of $\lambda$, $Y$ as $4$, $2$, respectively. 
To reduce computational complexity, we take $x \mapsto \lambda(x)$ instead of $(x,d) \mapsto \lambda(x,d)$ and $r\mapsto \Gamma(r)=c_1 I_2$ with $c_1>0$.
We set $\Xi^{\tu{p}}(x)=\smat{x_1^2&0\\0&x_2^2}$. 
The solutions of \eqref{convex_ISS_d} are
\begin{align*}
    P^{\tu{p}}&=\smat{
    0.2779&0.0098\\
    0.0098&0.0275},\\
    Y^{\tu{p}}(x)&=\smat{
    0.0766x_1+0.2042x_2 & -0.0387x_1-6.0269x_2},\\
    \Gamma^{\tu{p}}(r)&= \smat{51940.8776&0\\0&51940.8776},\\    
    \Theta^{\tu{p}}(x) & = \smat{
    0.0008+0.0893x_1^2+0.0046x_2^2+0.0151x_1x_2 & 0.0004-0.0041x_1^2+0.01305x_2^2-0.0186x_1x_2 \\
    \star & 0.0049+0.0168x_1^2+1.3703x_2^2+0.0057x_1x_2},\\
    \lambda^{\tu{p}}(x)&=0.1475+0.0170x_1^2+0.0371x_2^2-0.0007x_1x_2\\
    & \quad +0.0012x_1^4+0.0021x_1^2x_2^2+0.0026x_2^4,\\
    \eta^{\tu{p}}&= 0.0394.
\end{align*}
From~\eqref{convex_ISS_d:a} and \eqref{convex_ISS_d:b_func}, we obtain
\begin{align*}
a^{\tu{p}}(x)& =0.0161x_1^4-0.3792x_1^2x_2^2+6.5852x_2^4+1.2548x_1^6-1.3172x_1^4x_2^2+0.3879x_1^5x_2\\
 &\quad-5.7523x_1^3x_2^3-104.6085x_1^2x_2^4+9.5674x_1x_2^5+1852.5509x_2^6\\
b^{\tu{p}}(x) &=13.2875x_1^6-7.8005x_1^4x_2^2-94.1367x_1^2x_2^4+1352.8445x_2^6
.   
\end{align*}

Based on the so-obtained solutions for the cases of actuator and process disturbances, we compute the corresponding controllers, ISS-Lyapunov functions and comparison functions to contrast them with those found in Section~\ref{sec:pro_nonconvex} for the same polynomial system.
Controller and ISS-Lyapunov function are computed based on their definitions as
\begin{align*}
k^{\tu{c},\ell}(x) = Y^\ell(x) (P^\ell)^{-1} \hat{Z}^\ell(x), \quad V^{\tu{c},\ell}(x) = \hat{Z}^\ell(x)^\top (P^\ell)^{-1} \hat{Z}^\ell(x), \quad \ell \in \{\tu{a}, \tu{p}\}
\end{align*}
where the superscripts $\tu{c}$, $\tu{a}$, $\tu{p}$ stand for ``convex'', ``actuator'', ``process''.
The comparison functions $\alpha^{\tu{c},\ell}_1$, $\alpha^{\tu{c},\ell}_2$, $\alpha^{\tu{c},\ell}_3$, $\ell \in \{ \tu{a}, \tu{p} \}$, can be computed from $a^{\tu{a}}$, $b^{\tu{a}}$, $a^{\tu{p}}$, $b^{\tu{p}}$ by using YALMIP to solve, for $\ell \in \{ \tu{a}, \tu{p} \}$,
\begin{align*}
V^{\tu{c},\ell}(x) - \alpha^{\tu{c},\ell}_1(|x|) = s^\ell_{\alpha_1}(x), \,\, \alpha^{\tu{c},\ell}_2(|x|) - V^{\tu{c},\ell}(x) = s^\ell_{\alpha_2}(x), \,\,
a^\ell(x) - \alpha^{\tu{c},\ell}_3(|x|) = s^\ell_{\alpha_3}(x)
\end{align*}
for SOS polynomials $s^\ell_{\alpha_1}$, $s^\ell_{\alpha_2}$, $s^\ell_{\alpha_3}$ and $\alpha^{\tu{c},\ell}_1$, $\alpha^{\tu{c},\ell}_2$, $\alpha^{\tu{c},\ell}_3$ as in Lemma~\ref{lem:classk}.
Finally, we have from the proofs of Theorems~\ref{thm:convex_ISS_w} and \ref{thm:convex_ISS_d} that
\begin{align*}
\alpha^{\tu{c,a}}_4(r)=\sum_{k=1}^{N+1} \lambda_{\max}(C^{\tu{a}}_{k-1}) r^{2k}, \quad \alpha^{\tu{c,p}}_4(r)=\sum_{k=1}^{N+1} \lambda_{\max}(C^{\tu{p}}_{k-1}) r^{2k}. 
\end{align*}
All these quantities are summarized in Table~\ref{tab:solconvex}, which parallels Table~\ref{tab:sol}.
The numerically obtained controllers, ISS-Lyapunov functions and comparison functions satisfy a dissipation inequality like the one in~\eqref{ISS_lyap_fun_prop:dissip_ineq} so that for all $t \ge 0$, 
\begin{align*}
& \dot{V}^{\tu{c,a}}(t):=\langle\nabla V^{\tu{c,a}}(x(t)),f_{A_\star,B_\star}^{\tu{a}}(x(t),w(t),Y^{\tu{a}}(x(t))(P^{\tu{a}})^{-1} \hat{Z}^{\tu{a}}(x(t)))\rangle \\
& \hspace*{8cm}\le -\alpha_3^{\tu{c,a}}(|x(t)|)+\alpha_4^{\tu{c,a}}(|w(t)|) \\
& \dot{V}^{\tu{c,p}}(t):= \langle\nabla V^{\tu{c,p}}(x(t)),f_{A_\star,B_\star}^{\tu{p}}(x(t),d(t),Y^{\tu{p}}(x(t))(P^{\tu{p}})^{-1} \hat{Z}^{\tu{p}}(x(t)))\rangle \\
& \hspace*{8cm}\le -\alpha_3^{\tu{c,p}}(|x(t)|)+\alpha_4^{\tu{c,p}}(|d(t)|).
\end{align*}
This is shown in Figure~\ref{fig:iss_convex}, which parallels Figure~\ref{fig_iss_V}.

\begin{table}[ht!]
\resizebox{\linewidth}{!}{
\begin{tabular}{ll}
\toprule
ISS w.r.t. $w$ via \eqref{convex_ISS_w} & ISS w.r.t. $d$ via \eqref{convex_ISS_d}\\
\midrule
$\begin{matrix}
k^{\tu{c,a}}(x) = -0.0875x_1-5.0689x_2-0.0843x_1^3\\-5.1281x_1^2x_2 -0.5903x_1x_2^2-5.2469x_2^3
\end{matrix}$ & 
$\begin{matrix}
k^{\tu{c,p}}(x) = 0.3295x_1^3+8.5960x_1^2x_2\\-1.5220x_1x_2^2-221.9413x_2^3
\end{matrix}$\\
$\begin{matrix}
V^{\tu{c,a}}(x) =1.1698x_1^2+0.0861x_1x_2+1.6312x_2^2
\end{matrix}$ & 
$\begin{matrix}
V^{\tu{c,p}}(x) =3.6452x_1^4-2.6055x_1^2x_2^2+36.7810x_2^4
\end{matrix}$\\
$\begin{matrix}
\alpha_1^{\tu{c,a}}(r) = 1.1658r^2
\end{matrix}$ & 
$\begin{matrix}
\alpha_1^{\tu{c,p}}(r) = 3.5941r^4
\end{matrix}$\\
$\begin{matrix}
\alpha_2^{\tu{c,a}}(r) =1.6353r^2
\end{matrix}$ & 
$\begin{matrix}
\alpha_2^{\tu{c,p}}(r) = 36.8322r^4
\end{matrix}$\\
$\begin{matrix}
\alpha_3^{\tu{c,a}}(r) = 0.0017r^2+0.3501r^4
\end{matrix}$ & 
$\begin{matrix}
\alpha_3^{\tu{c,p}}(r) = 0.0034r^4+ 0.3483r^6
\end{matrix}$\\
$\begin{matrix}
\alpha_4^{\tu{c,a}}(r) = 2.7159r^2+2.4804r^4
\end{matrix}$ & 
$\begin{matrix}
\alpha_4^{\tu{c,p}}(r) = 51941r^2
\end{matrix}$\\
\bottomrule
\end{tabular}
}
\caption{Solutions of programs \eqref{convex_ISS_w} and \eqref{convex_ISS_d}.}
\label{tab:solconvex}
\end{table}

\begin{figure}[htbp]
\centerline{
\includegraphics[width=0.55\linewidth]{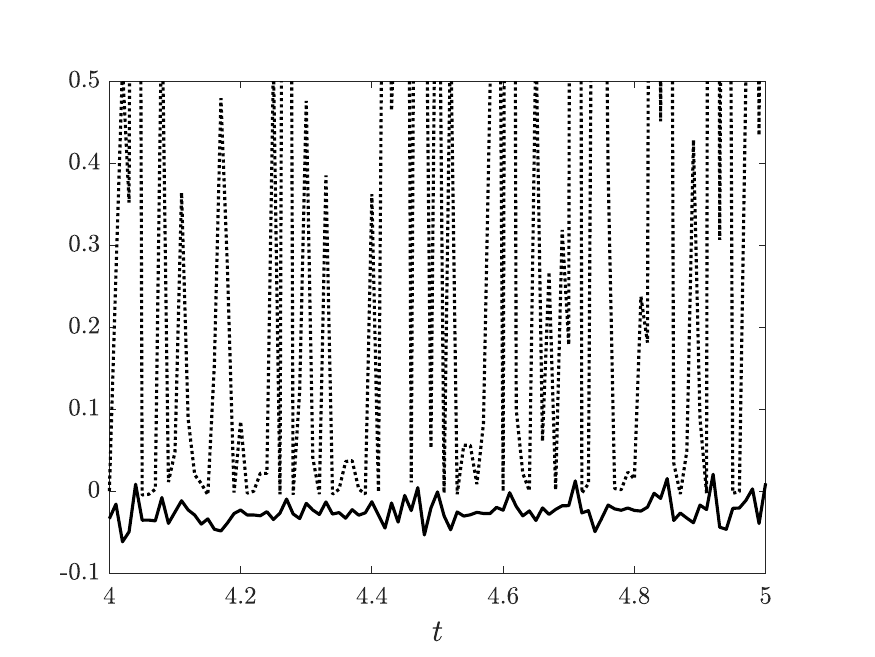}\hspace*{-6mm}
\includegraphics[width=0.55\linewidth]{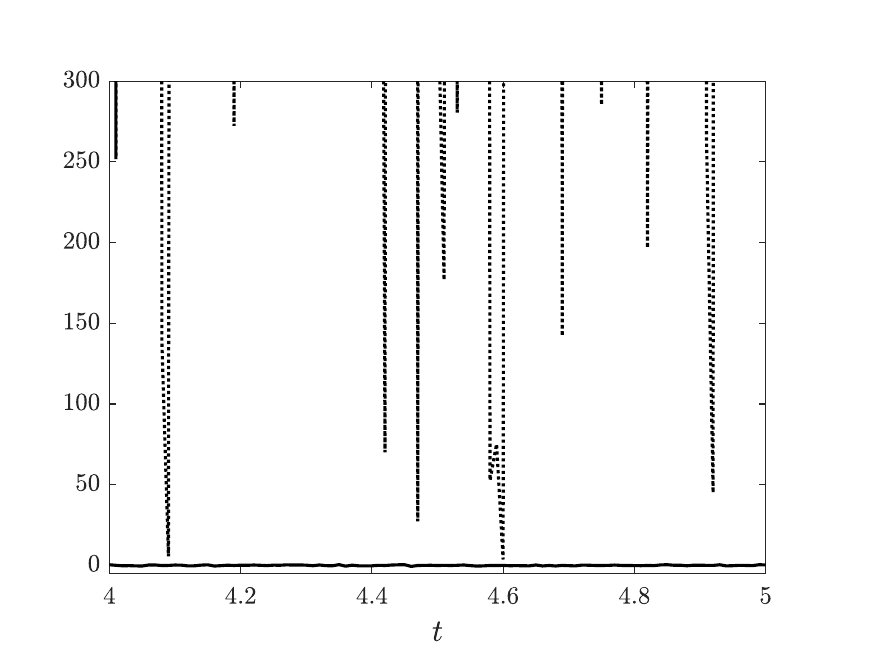}
}
\caption{
Left: evolution of $\dot{V}^{\tu{c,a}}(\cdot)$ (solid) and $-\alpha_3^{\tu{c,a}}(|x(\cdot)|)+\alpha_4^{\tu{c,a}}(|w(\cdot)|)$ (dotted).
Right: evolution of $\dot{V}^{\tu{c,p}}(\cdot)$ (solid) and $-\alpha_3^{\tu{c,p}}(|x(\cdot)|)+\alpha_4^{\tu{c,p}}(|d(\cdot)|)$ (dotted).
}
\label{fig:iss_convex}
\end{figure}

\begin{table}
\begin{tabular}{cccccc}
\toprule
Program & Variables & Scalar  & SOS  & Matrix  & Computation  \\
 &  &  constraints &  constraints &  constraints &  time (s)\\
\midrule
\eqref{biconvex_ISS_w}	& 35 (iter.~1) & 12 (iter.~1) & 4 (iter.~1) & 0 (iter.~1) & 10.7709\\
						& 17 (iter.~2) & 12 (iter.~2) & 3 (iter.~2) & 0 (iter.~2) & (for 6 iter.)\\
\eqref{biconvex_ISS_d}	& 35 (iter.~1) & 12 (iter.~1) & 4 (iter.~1) & 0 (iter.~1) & 9.1363 \\
						& 17 (iter.~2) & 12 (iter.~2) & 3 (iter.~2) & 0 (iter.~2) & (for 6 iter.) \\
\eqref{convex_ISS_w}		& 51 & 4 & 3 & 1 (2-by-2) & 1.7793\\
\eqref{convex_ISS_d}		& 50 & 2 & 3 & 1 (2-by-2) & 1.6937\\
\bottomrule
\end{tabular}
\caption{Main features of the devised SOS programs for computational complexity.}
\label{tab:simu_complexity}
\end{table}

\section{Discussion}
\label{sec:dis}

For the previous results, we discuss computational complexity and compare the biconvex and convex approaches.

\subsection{Computational complexity}

To discuss the computational complexity of the devised SOS programs, we determine
\begin{itemize}[left=0pt]
\item number of decision variables,
\item number of constraints and
\item computation time
\end{itemize}
for each of these SOS programs.
In the number of decision variables, we consider scalars, the free coefficients in matrix variables and the coefficients of polynomials\footnote{
For a polynomial with $n$ variables, minimum degree $\underline{d}$ and maximum degree $\overline{d}$, the number of coefficients is ${n+\overline{d} \choose \overline{d}} - {n+\underline{d}-1 \choose \underline{d}-1}$.}.
For the biconvex programs \eqref{biconvex_ISS_w} and \eqref{biconvex_ISS_d}, each of the two alternate iterations has in principle a different number of decision variables and constraints.
The computation times for solving each program are obtained by the MATLAB\textsuperscript{\textregistered} R2022a function tic toc
on a machine with processor Intel\textsuperscript{\textregistered} Core\textsuperscript{\texttrademark} i7 with 8 cores and 2.50 GHz.
In the case of the biconvex programs \eqref{biconvex_ISS_w} and \eqref{biconvex_ISS_d}, we consider as computation time the time required to perform three times the two iterations, for a total of six SOS programs solved.

The overall results are in Table~\ref{tab:simu_complexity}.
While the convex programs are one-shot, the time taken to solve them is comparable to that needed for solving a single iteration for \eqref{biconvex_ISS_w} or \eqref{biconvex_ISS_d}, namely, \eqref{biconvex_ISS_w} or \eqref{biconvex_ISS_d} that has become convex after fixing some decision variables.
 
\subsection{Comparison between biconvex and convex programs} 

Feasibility of the biconvex and convex programs is only a sufficient condition, rather than necessary and sufficient, for input-to-state stabilization with respect to exogenous inputs.
Further, the next elements are worth mentioning for a comparison between the two approaches.
\begin{enumerate}[left=-3pt,label=\alph*)]
\item In~\eqref{biconvex_ISS_w} and \eqref{biconvex_ISS_d}, the ISS-Lyapunov function is any polynomial whereas in~\eqref{convex_ISS_w} and \eqref{convex_ISS_d} it is parametrized as $\hat{Z}(x)^\top P^{-1}\hat{Z}(x)$ (with $P$ positive definite).
The latter is generally more restrictive.
\item \eqref{biconvex_ISS_w} and \eqref{biconvex_ISS_d} rely on an initial guess to deal iteratively with the products of decision variables, whereas  \eqref{convex_ISS_w} and \eqref{convex_ISS_d} do not.
\item When considering a dissipation inequality of the type \eqref{ISS_lyap_fun_prop:dissip_ineq}, it may be desirable that the function $\alpha_3$ is ``large''  and the function $\alpha_4$ is ``small'' so to have ``large'' decrease through the state and ``small'' increase induced by the exogenous input.
When contrasting such comparison functions for the biconvex and the convex programs in Tables~\ref{tab:sol} and \ref{tab:solconvex}, those are comparable (save for $\alpha_4^{\tu{c,p}}$).
\end{enumerate}

All in all, the two approaches have each their advantages and disadvantages.

\section{Conclusion}
\label{sec:conclu}
Starting from noisy data generated by a nonlinear input-affine system with polynomial dynamics, we have obtained conditions for input-to-state stabilization of all systems consistent with data, with respect to actuator and process disturbances.

Future work is aimed at understanding how the proposed results can serve towards data-driven input-to-state stabilization of input-affine nonlinear systems with nonpolynomial dynamics, including how more general parametrizations of comparison functions could make the corresponding bounds tighter.

\bibliographystyle{siamplain}
\bibliography{references}

\begin{thebibliography}{10}

\bibitem{ahmadi2023safely}
{\sc A.~A. Ahmadi, A.~Chaudhry, V.~Sindhwani, and S.~Tu}, {\em Safely learning
  dynamical systems}, arXiv preprint arXiv:2305.12284,  (2023).

\bibitem{ahmadi2023learning}
{\sc A.~A. Ahmadi and B.~E. Khadir}, {\em Learning dynamical systems with side
  information}, SIAM Review, 65 (2023), pp.~183--223.

\bibitem{ahmadi2011globally}
{\sc A.~A. Ahmadi, M.~Krstic, and P.~A. Parrilo}, {\em A globally
  asymptotically stable polynomial vector field with no polynomial {L}yapunov
  function}, in Proc. IEEE Conference on Decision and Control and European
  Control Conference, 2011, pp.~7579--7580.

\bibitem{anta2008self}
{\sc A.~Anta and P.~Tabuada}, {\em Self-triggered stabilization of homogeneous
  control systems}, in American Control Conference, 2008, pp.~4129--4134.

\bibitem{bisoffi2021trade}
{\sc A.~Bisoffi, C.~De~Persis, and P.~Tesi}, {\em Trade-offs in learning
  controllers from noisy data}, Systems \& Control Letters, 154 (2021).

\bibitem{bisoffi2022data}
{\sc A.~Bisoffi, C.~De~Persis, and P.~Tesi}, {\em Data-driven control via
  {P}etersen's lemma}, Automatica, 145 (2022).

\bibitem{boyd1994linear}
{\sc S.~Boyd, L.~El~Ghaoui, E.~Feron, and V.~Balakrishnan}, {\em Linear matrix
  inequalities in system and control theory}, SIAM, 1994.

\bibitem{brunton2016discovering}
{\sc S.~L. Brunton, J.~L. Proctor, and J.~N. Kutz}, {\em Discovering governing
  equations from data by sparse identification of nonlinear dynamical systems},
  Proceedings of the National Academy of Sciences, 113 (2016), pp.~3932--3937.

\bibitem{chen2023data}
{\sc H.~Chen, A.~Bisoffi, and C.~De~Persis}, {\em Data-driven input-to-state
  stabilization with respect to measurement errors}, in Proc. IEEE Conference
  on Decision and Control, 2023, pp.~1601--1606.

\bibitem{chesi2010lmi}
{\sc G.~Chesi}, {\em {LMI} techniques for optimization over polynomials in
  control: a survey}, IEEE Transactions on Automatic Control, 55 (2010),
  pp.~2500--2510.

\bibitem{de2023event}
{\sc C.~De~Persis, R.~Postoyan, and P.~Tesi}, {\em Event-triggered control from
  data}, IEEE Transactions on Automatic Control,  (2023).

\bibitem{de2023learning}
{\sc C.~De~Persis and P.~Tesi}, {\em Learning controllers for nonlinear systems
  from data}, Annual Reviews in Control, 100915 (2023).

\bibitem{garnier2003continuous}
{\sc H.~Garnier, M.~Mensler, and A.~Richard}, {\em Continuous-time model
  identification from sampled data: implementation issues and performance
  evaluation}, International Journal of Control, 76 (2003), pp.~1337--1357.

\bibitem{gorski2007biconvex}
{\sc J.~Gorski, F.~Pfeuffer, and K.~Klamroth}, {\em Biconvex sets and
  optimization with biconvex functions: a survey and extensions}, Mathematical
  methods of operations research, 66 (2007), pp.~373--407.

\bibitem{grune2023examples}
{\sc L.~Gr{\"u}ne and M.~Sperl}, {\em Examples for separable control {L}yapunov
  functions and their neural network approximation}, IFAC-PapersOnLine, 56
  (2023), pp.~19--24.

\bibitem{guo2021data}
{\sc M.~Guo, C.~De~Persis, and P.~Tesi}, {\em Data-driven stabilization of
  nonlinear polynomial systems with noisy data}, IEEE Transactions on Automatic
  Control, 67 (2021), pp.~4210--4217.

\bibitem{horn2013matrix}
{\sc R.~A. Horn and C.~R. Johnson}, {\em Matrix analysis Second Edition},
  Cambridge university press, 2013.

\bibitem{ichihara2012sum}
{\sc H.~Ichihara}, {\em Sum of squares based input-to-state stability analysis
  of polynomial nonlinear systems}, SICE Journal of Control, Measurement, and
  System Integration, 5 (2012), pp.~218--225.

\bibitem{isidori1999nonlinear}
{\sc A.~Isidori}, {\em Nonlinear Control Systems {II}}, Springer, 1999.

\bibitem{jarvis2005control}
{\sc Z.~Jarvis-Wloszek, R.~Feeley, W.~Tan, K.~Sun, and A.~Packard}, {\em
  Control applications of sum of squares programming}, in Positive Polynomials
  in Control, Springer, 2005, pp.~3--22.

\bibitem{khalil2002nonlinear}
{\sc H.~K. Khalil}, {\em Nonlinear systems, 3rd ed.}, Prentice Hall, 2002.

\bibitem{krstic1998inverse}
{\sc M.~Krstic and Z.-H. Li}, {\em Inverse optimal design of input-to-state
  stabilizing nonlinear controllers}, IEEE Transactions on Automatic Control,
  43 (1998), pp.~336--350.

\bibitem{lavaei2023data}
{\sc A.~Lavaei and D.~Angeli}, {\em Data-driven stability certificate of
  interconnected homogeneous networks via {ISS} properties}, IEEE Control
  Systems Letters,  (2023).

\bibitem{liberzon2002universal}
{\sc D.~Liberzon, E.~D. Sontag, and Y.~Wang}, {\em Universal construction of
  feedback laws achieving {ISS} and integral-{ISS} disturbance attenuation},
  Systems \& Control Letters, 46 (2002), pp.~111--127.

\bibitem{lofberg2004yalmip}
{\sc J.~L\"{o}fberg}, {\em {YALMIP}: A toolbox for modeling and optimization in
  {MATLAB}}, in Proc. IEEE Int. Symp. Comp. Aid. Contr. Sys. Design, 2004.

\bibitem{lofberg2009pre}
{\sc J.~L\"{o}fberg}, {\em Pre-and post-processing sum-of-squares programs in
  practice}, IEEE Transactions on Automatic Control, 54 (2009), pp.~1007--1011.

\bibitem{luppi2023data}
{\sc A.~Luppi, A.~Bisoffi, C.~De~Persis, and P.~Tesi}, {\em Data-driven design
  of safe control for polynomial systems}, European Journal of Control, 100914
  (2023).

\bibitem{majumdar2013control}
{\sc A.~Majumdar, A.~A. Ahmadi, and R.~Tedrake}, {\em Control design along
  trajectories with sums of squares programming}, in 2013 IEEE International
  Conference on Robotics and Automation, 2013, pp.~4054--4061.

\bibitem{martin2023guarantees}
{\sc T.~Martin, T.~B. Sch{\"o}n, and F.~Allg{\"o}wer}, {\em Guarantees for
  data-driven control of nonlinear systems using semidefinite programming: A
  survey}, Annual Reviews in Control, 100911 (2023).

\bibitem{milanese2004set}
{\sc M.~Milanese and C.~Novara}, {\em Set membership identification of
  nonlinear systems}, Automatica, 40 (2004), pp.~957--975.

\bibitem{Mironchenko2023input}
{\sc A.~Mironchenko}, {\em Input-to-State Stability: Theory and Applications},
  Springer, 2023.

\bibitem{parrilo2003semidefinite}
{\sc P.~A. Parrilo}, {\em Semidefinite programming relaxations for
  semialgebraic problems}, Mathematical programming, 96 (2003), pp.~293--320.

\bibitem{prajna2004nonlinear}
{\sc S.~Prajna, A.~Papachristodoulou, and F.~Wu}, {\em Nonlinear control
  synthesis by sum of squares optimization: A {Lyapunov-based} approach}, in
  Proc. Asian Control Conference, vol.~1, 2004, pp.~157--165.

\bibitem{sontag1989smooth}
{\sc E.~D. Sontag}, {\em Smooth stabilization implies coprime factorization},
  IEEE Transactions on Automatic Control, 34 (1989), pp.~435--443.

\bibitem{sontag2008input}
{\sc E.~D. Sontag}, {\em Input to State Stability: Basic Concepts and Results},
  Springer, 2008, pp.~163--220.

\bibitem{sontag1995characterizations}
{\sc E.~D. Sontag and Y.~Wang}, {\em On characterizations of the input-to-state
  stability property}, Systems \& Control Letters, 24 (1995), pp.~351--359.

\bibitem{tabuada2007event}
{\sc P.~Tabuada}, {\em Event-triggered real-time scheduling of stabilizing
  control tasks}, IEEE Transactions on Automatic Control, 52 (2007),
  pp.~1680--1685.

\end{thebibliography}

\end{document}